\newcommand\<{\langle}
\newcommand\kk{\Bbbk}
\newcommand\QQ{\mathbb{Q}}
\newcommand\RR{\mathbb{R}}
\newcommand\ZZ{\mathbb{Z}}
\newcommand\floor[1]{\lfloor{#1}\rfloor}
\renewcommand\>{\rangle}
\DeclareMathOperator\GL{GL}
\DeclareMathOperator\perm{perm}
\theoremstyle{plain}
\newtheorem{thm}{Theorem}[section]
\newtheorem{prop}[thm]{Proposition}
\newtheorem{lemma}[thm]{Lemma}
\newtheorem{cor}[thm]{Corollary}
\newtheorem*{fthm}{Theorem}
\theoremstyle{definition}
\newtheorem{defn}[thm]{Definition}
\newtheorem{example}[thm]{Example}
\newtheorem{remark}[thm]{Remark}
\newtheorem{question}[thm]{Question}
\newcounter{x}
\newcounter{y}
\newcounter{z}
\newcounter{i}
\newcounter{j}
\newcommand\xaxis{210}
\newcommand\yaxis{-30}
\newcommand\zaxis{90}
\tikzstyle{every circle node}=[draw,minimum size=3pt,inner sep=0]
\tikzstyle{every node}=[transform shape]
\newcommand\topside[4]{
  \fill[fill=white,
  draw=black,shift={(\xaxis:#1)},shift={(\yaxis:#2)},
  shift={(\zaxis:#3)}] (0,0) -- (30:1) -- (0,1) --(150:1)--(0,0);

  \ifnum#4=1
    \draw (0.3,0.5) node (1) [circle, shift={(\xaxis:#1)},shift={(\yaxis:#2)},
    shift={(\zaxis:#3)}, fill=white]{};

    \draw (-0.3,0.5) node (2) [circle, shift={(\xaxis:#1)},shift={(\yaxis:#2)},
    shift={(\zaxis:#3)}, fill=black]{};

    \draw (1) -- (2);
  \fi
}
\newcommand\leftside[4]{
  \fill[fill=gray, draw=black,shift={(\xaxis:#1)},shift={(\yaxis:#2)},
  shift={(\zaxis:#3)}] (0,0) -- (0,-1) -- (210:1) --(150:1)--(0,0);

  \ifnum#4=1

    \draw (-0.6,0.0) node (1) [circle, shift={(\xaxis:#1)},shift={(\yaxis:#2)},
    shift={(\zaxis:#3)}, fill=white]{};

    \draw (-0.3,-0.5) node (2) [circle, shift={(\xaxis:#1)},shift={(\yaxis:#2)},
    shift={(\zaxis:#3)}, fill=black]{};

    \draw (1) -- (2);
  \fi
}
\newcommand\rightside[4]{
  \fill[fill=lightgray, draw=black,shift={(\xaxis:#1)},shift={(\yaxis:#2)},
  shift={(\zaxis:#3)}] (0,0) -- (30:1) -- (-30:1) --(0,-1)--(0,0);

  \ifnum#4=1
    \draw (0.3,-0.5) node (1) [circle, shift={(\xaxis:#1)},shift={(\yaxis:#2)},
    shift={(\zaxis:#3)}, fill=white]{};

    \draw (0.6,0.0) node (2) [circle, shift={(\xaxis:#1)},shift={(\yaxis:#2)},
    shift={(\zaxis:#3)}, fill=black]{};

    \draw (1) -- (2);
  \fi
}
\newcommand\cube[4]{
  \topside{#1}{#2}{#3}{#4} \leftside{#1}{#2}{#3}{#4} \rightside{#1}{#2}{#3}{#4}
}
\newcommand\pp[2]{
 \setcounter{x}{-1}
  \foreach \a in {#1} {
    \addtocounter{x}{1}
    \setcounter{y}{-1}
    \foreach \b in \a {
      \addtocounter{y}{1}
      \setcounter{z}{-1}
      \foreach \c in {1,...,\b} {
        \addtocounter{z}{1}
        \cube{\value{x}}{\value{y}}{\value{z}}{#2}
      }
    }
  }
}
\newcommand\invrside[4]{
  \fill[fill=white, draw=black,shift={(\xaxis:#1)},shift={(\yaxis:#2)},
  shift={(\zaxis:#3)}] (0,0) -- (0,1) -- (30:1) -- (-30:1)--(0,0);

  \ifnum#4 = 1
    \draw (0.3,0.5) node (1) [circle, shift={(\xaxis:#1)},shift={(\yaxis:#2)},
    shift={(\zaxis:#3)}, fill=white]{};

    \draw (0.6,0.0) node (2) [circle, shift={(\xaxis:#1)},shift={(\yaxis:#2)},
    shift={(\zaxis:#3)}, fill=black]{};

    \draw (1) -- (2);
  \fi

  \ifnum#4 = 2
    \draw[thick,white,shift={(\xaxis:#1)},shift={(\yaxis:#2)},
    shift={(\zaxis:#3)}] (0,0) -- (0,1) -- (30:1) -- cycle;
    \draw[shift={(\xaxis:#1)},shift={(\yaxis:#2)},
    shift={(\zaxis:#3)}] (0,0) -- (0,1) -- (30:1) -- cycle;
  \fi
}
\newcommand\invlside[4]{
  \fill[fill=white, draw=black,shift={(\xaxis:#1)},shift={(\yaxis:#2)},
  shift={(\zaxis:#3)}] (0,0) -- (0,1) -- (150:1) --(210:1)--(0,0);

  \ifnum#4 = 1
    \draw (-0.6,0.0) node (1) [circle, shift={(\xaxis:#1)},shift={(\yaxis:#2)},
    shift={(\zaxis:#3)}, fill=white]{};

    \draw (-0.3,0.5) node (2) [circle, shift={(\xaxis:#1)},shift={(\yaxis:#2)},
    shift={(\zaxis:#3)}, fill=black]{};

    \draw (1) -- (2);
  \fi

  \ifnum#4 = 2
    \draw[thick, white,shift={(\xaxis:#1)},shift={(\yaxis:#2)},
    shift={(\zaxis:#3)}] (0,0) -- (0,1) -- (150:1) -- cycle;
    \draw[shift={(\xaxis:#1)},shift={(\yaxis:#2)},
    shift={(\zaxis:#3)}] (0,0) -- (0,1) -- (150:1) -- cycle;
  \fi
}
\newcommand\invdside[4]{
  \fill[fill=white, draw=black,shift={(\xaxis:#1)},shift={(\yaxis:#2)},
  shift={(\zaxis:#3)}] (0,0) -- (-30:1) -- (0,-1) --(-150:1)--(0,0);

  \ifnum#4 = 1
    \draw (0.3,-0.5) node (1) [circle, shift={(\xaxis:#1)},shift={(\yaxis:#2)},
    shift={(\zaxis:#3)}, fill=white]{};

    \draw (-0.3,-0.5) node (2) [circle, shift={(\xaxis:#1)},shift={(\yaxis:#2)},
    shift={(\zaxis:#3)}, fill=black]{};

    \draw (1) -- (2);
  \fi

  \ifnum#4 = 2
    \draw[thick,white,shift={(\xaxis:#1)},shift={(\yaxis:#2)},
    shift={(\zaxis:#3)}] (0,0) -- (-30:1) -- (0,-1) -- cycle;
    \draw[shift={(\xaxis:#1)},shift={(\yaxis:#2)},
    shift={(\zaxis:#3)}] (0,0) -- (-30:1) -- (0,-1) -- cycle;
  \fi
}
\newcommand\outline[4]{
  \foreach \a in {0,...,#1} {
    \foreach \b in {0,...,#3} {
      \invrside{0}{\a}{\b}{#4};
    }
  }

  \foreach \a in {0,...,#2} {
    \foreach \b in {0,...,#3} {
      \invlside{\a}{0}{\b}{#4};
    }
  }

  \foreach \a in {0,...,#1} {
    \foreach \b in {0,...,#2} {
      \invdside{\b}{\a}{0}{#4};
    }
  }
}
\newcommand\young[3]{
  \setcounter{i}{1}
  \foreach \a in #1 {
    \addtocounter{i}{-1}
    \setcounter{j}{-1}
    \foreach \b in {1,...,\a} {
      \addtocounter{j}{1}

      \ifnum #2 = 1
        \draw[scale=0.5,white, fill=white] (\value{j}+#3, \value{i}) rectangle (\value{j}+#3+1,
        \value{i}-1);
        \draw[scale=0.5,dotted, fill=white] (\value{j}+#3, \value{i}) rectangle (\value{j}+#3+1,
        \value{i}-1);
        \ifnum \b = 1
          \draw [scale=0.5] (\value{j}+#3, \value{i}) -- (\value{j}+#3,
          \value{i}-1);
        \fi
      \else
        \draw[scale=0.5,thick, white, fill=white] (\value{j}+#3, \value{i}) rectangle (\value{j}+#3+1,
        \value{i}-1);
      \fi

      \ifnum #2 = 0
        \draw [scale=0.5] (\value{j}+#3+1, \value{i}) -- (\value{j}+#3+1, \value{i}-1)
        -- (\value{j}+#3, \value{i}-1);
      \fi

      \ifnum #2 = 2
        \draw [scale=0.5] (\value{j}+#3, \value{i}) rectangle (\value{j}+1,
        \value{i}-1);
      \fi

    }
  }
}
\newcommand\skewyoung[3]{
  \young{#1}{2}{0}
  \young{#2}{#3}{0}
}
\begin{document}

\title{Trivariate monomial complete intersections and plane
  partitions}

\author{Charles Chen}
\email{charleschen@berkeley.edu}

\author{Alan Guo}
\email{aguo@mit.edu}

\author{Xin Jin}
\email{jinxx150@umn.edu}

\author{Gaku Liu}
\email{xueliu@princeton.edu}

\begin{abstract}
  We consider the homogeneous components $U_r$ of the map on $R =
  \kk[x,y,z]/(x^A,y^B,z^C)$ that multiplies by $x+y+z$.  We prove a
  relationship between the Smith normal forms of submatrices of an
  arbitrary Toeplitz matrix using Schur polynomials, and use this to
  give a relationship between Smith normal form entries of $U_r$. We
  also give a bijective proof of an identity proven by J. Li and
  F. Zanello equating the determinant of the middle homogeneous
  component $U_r$ when $(A,B,C) = (a+b,a+c,b+c)$ to the number of
  plane partitions in an $a \times b \times c$ box. Finally, we prove
  that, for certain vector subspaces of $R$, similar identities hold
  relating determinants to symmetry classes of plane partitions, in
  particular classes $3$, $6$, and~$8$.
\end{abstract}

\subjclass[2010]{Primary: 05E40; Secondary: 05E05, 05E18, 13E10,
  15A15} 
\maketitle

\section{Introduction}

For a commutative ring $\kk$ and positive integers $A,B,C$, consider
the trivariate monomial complete intersection $R =
\kk[x,y,z]/(x^A,y^B,z^C)$.  This carries a standard grading in which
$x,y,z$ each have degree one, and decomposes as a direct sum $R =
\bigoplus_{r=0}^e R_r$ where $e := A+B+C-3$, and each homogeneous
component $R_r \cong \kk^{h(r)}$, where $h(r)$ denotes the size of the
set $B_r$ consisting of all monomials of total degree $r$ in $x,y,z$
which are nonzero in $R$.  It is easily seen that
$(h(0),h(1),\ldots,h(e))$ is a symmetric unimodal
sequence. Furthermore, it is known that the maps
$$
U_r : \xymatrix{ R_r \ar[rr]^-{\cdot (x+y+z)} && R_{r+1} }
$$
have $U_{e-r}^t = U_r$, and that $U_r$ is injective for $0 \le r \le
\floor{\frac{e-1}{2}}$ when working with $\kk = \ZZ$ or~$\QQ$ (or, in
fact, with any field of characteristic zero). 

The maps $U_r$ arise in a more general setting in algebraic geometry
and commutative algebra when studying the Weak Lefschetz Property of
general hyperplane sections.  Algebraically, one studies the
multiplication by a general linear form $\ell$ on a graded algebra
$S/I$, where $I$ is a homogeneous ideal in a polynomial ring $S$.  If
$I$ is a monomial ideal, it has been observed in~\cite[Prop~2.2]{MMN}
that choosing $\ell$ as the sum of the variables is enough to
determine if the algebra has the Weak Lefschetz Property.  The paper
focuses on one non-trivial case when one considers the Weak Lefschetz
Property and one that has recently been studied (\cite{LiZanello},
\cite{BrennerKaid}).

Our first main result attempts to address how the maps $U_r$ behave,
by considering the Smith normal form of $U_r$ when working over $\kk =
\ZZ$. We say that a matrix $U$ in $\ZZ^{m \times n}$, $m \ge n$, has
$\textsc{SNF}(U)= (a_1,a_2,\ldots,a_n)$ if there exist matrices $P,Q
\in \GL_m(\ZZ),\GL_n(\ZZ)$ such that $PUQ$ takes the diagonal form
$$
\left(
\begin{array}{ccccc}
a_1 & 0 & \cdots & 0 & 0 \\
0 & a_2 & \cdots & 0 & 0 \\
\vdots & \vdots & \ddots & \vdots & \vdots \\
0 & 0 & \cdots & a_{n-1} & 0 \\
0 & 0 & \cdots & 0 & a_n \\
\hline
0 & 0 & \cdots & 0 & 0 \\
\vdots & \vdots & \ddots & \vdots & \vdots \\
0 & 0 & \cdots & 0 & 0
\end{array}
\right)
$$
and $a_i$ divides $a_{i+1}$.

\begin{thm}\label{t:snf}
  Assume $A \ge B \ge C \ge 1$.
\begin{enumerate}[(i)]
\item%
  For $0 \le r \le A-2$,
  $$
  \textsc{SNF}(U_r) = \underbrace{(1,1,\ldots,1)}_{h(r)}.
  $$
  In particular, these maps $U_r$ are injective when $\kk$ is any
  field.

\item%
  For $A-1 \le r \le \floor{\frac{e-1}{2}}$, the non-unit entries in
  $\textsc{SNF}(U_r)$ are the same as the non-unit entries of
  $\textsc{SNF}(M_r(A,B,C))$ where
  $$
  M_r(A,B,C) := \left( \ {A \choose r-B+i-j+2} \ \right)_{
  \substack{
  i = 1,\ldots,B+C-r-2 \\
  \hspace{-1.35em} j = 1,\ldots,r-A+2
  }}.
  $$
  In particular, there are at most $r-A+2$ such non-unit entries, so
  that for any field~$\kk$, the map $U_r$ has nullity at most $r-A+2$.

\item%
  Let $m := \floor{\frac{e-1}{2}}$, so that $U_m : R_m \to R_{m+1}$ is
  a map closest to the ``middle'' of~$R$. If $\textsc{SNF}(U_m) =
  (a_1,a_2,\ldots,a_{h_m})$, then for $0 \le s \le m$ one has
  $$
  \textsc{SNF}(U_{m-s}) = (\underbrace{1,1,\ldots,1}_{s-h(m)+h(m-s)},
  a_1,a_2,\ldots,a_{h(m)-s}).
  $$
\end{enumerate}
\end{thm}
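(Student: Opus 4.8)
\bigskip
\noindent\textbf{A proof proposal.}

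The plan is to show that, after multiplication on the left and right by invertible integer matrices, $U_r$ takes the block form $I\oplus N_r$ (with possibly some appended zero rows), where $N_r$ is a submatrix --- a ``window'' --- of the single Pascal--Toeplitz matrix
$$
T \ := \ \left( \binom{A}{i-j} \right)_{i,j\in\ZZ},
$$
and then to feed these windows into the announced relationship between Smith normal forms of submatrices of an arbitrary Toeplitz matrix (the one obtained via Schur polynomials). The Toeplitz structure is forced on us: iterating multiplication by $\ell=x+y+z$ sends $x^ay^bz^c$ to $\sum\binom{k}{i,j,\ell}x^{a+i}y^{b+j}z^{c+\ell}$ as long as no exponent reaches its bound, so the generic behaviour of the $U_r$ is governed by multinomial, and in the end binomial, coefficients; the relations $x^A,y^B,z^C$ perturb this only in a controlled, triangular fashion.

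\emph{Reduction to a Toeplitz window (parts (i) and (ii)).} I would induct on $C$ using the degree-graded short exact sequence
$$
0\longrightarrow zR\longrightarrow R\longrightarrow R/(z)\longrightarrow 0,
$$
in which $R/(z)=\kk[x,y]/(x^A,y^B)$ and $zR\cong\kk[x,y,z]/(x^A,y^B,z^{C-1})$ up to a degree shift. Since $\ell\equiv x+y$ on $R/(z)$, this sequence is compatible with the $U_r$: ordering the monomial basis of each $R_r$ with the multiples of $z$ last makes the matrix of $U_r$ block lower-triangular, with diagonal blocks the corresponding maps for $R/(z)$ and for $zR$, and with off-diagonal block the inclusion $x^ay^b\mapsto x^ay^bz$, which is a partial identity. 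In the base case $C=1$ one has $R=\kk[x,y]/(x^A,y^B)$, where the matrix of $U_r$ is bidiagonal with all nonzero entries $1$ and a direct sweep of unimodular operations exhibits it as a one-sided identity for $r\le A-2$ (which is all the theorem asserts in this case). For the inductive step one uses the two diagonal blocks, already reduced by induction, then clears the off-diagonal block against the partial identity and the surviving identity parts; the crucial point is that the two smaller variables are peeled off before the $x^A$-relation can produce any torsion, so only the binomials $\binom{A}{\cdot}$ survive --- no $\binom{B}{\cdot}$ and no genuine multinomials --- and an index bookkeeping identifies the leftover block with $M_r(A,B,C)$. When $r\le A-2$ this block is empty, which is part~(i) (and with it injectivity over any field, since then $U_r$ is unimodularly equivalent to a one-sided identity); otherwise it is precisely $M_r(A,B,C)$, which is part~(ii), the bound $r-A+2$ on the number of non-unit entries being just its number of columns.

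\emph{Part (iii).} For $m-s\ge A-1$ I would apply part~(ii) to both $U_{m-s}$ and $U_m$, reducing the claim to a comparison of $\textsc{SNF}(M_{m-s})$ and $\textsc{SNF}(M_m)$. Both are windows of the same Toeplitz matrix $T$ in the same diagonal band --- one checks that the arguments $\binom{A}{k}$ occurring fill the fixed interval $k\in[A-B+1,\,C-1]$, independently of $r$ --- differing only in shape: passing from $M_m$ to $M_{m-s}$ deletes the $s$ largest columns and adjoins $s$ new small rows. The Toeplitz--Schur relationship then shows that $\textsc{SNF}(M_{m-s})$ is obtained from $\textsc{SNF}(M_m)$ by deleting the $s$ largest invariant factors and re-padding with $1$'s, which is exactly the assertion of~(iii) after translating back through part~(ii) and using $h_m=h(m)$. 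For $m-s\le A-2$ one invokes part~(i) instead: $\textsc{SNF}(U_{m-s})=(1,\dots,1)$; this matches the displayed formula because the column count in part~(ii) forces $\textsc{SNF}(U_m)$ to have at most $m-A+2\le s$ non-unit entries, so $a_1=\cdots=a_{h(m)-s}=1$ and the formula collapses to a string of $h(m-s)$ ones.

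\emph{The main obstacle.} The technical heart is the reduction: running the induction on $C$ so that a coherent set of unimodular operations works uniformly in $r$, and checking that the leftover block is literally $M_r(A,B,C)$ --- equivalently, that the $y^B$ and $z^C$ relations contribute nothing beyond identity blocks. Once this is done, part~(iii) is only index bookkeeping on top of the Toeplitz-submatrix Smith normal form relationship, which is taken as known.
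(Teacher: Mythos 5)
The gap is in part (ii). Your block decomposition of $U_r$ as $\left(\begin{smallmatrix} V_r & 0 \\ J & W_{r-1}\end{smallmatrix}\right)$ --- two-variable block $V_r$, partial identity $J$ given by multiplication by $z$, and $W_{r-1}$ the up map for $\kk[x,y,z]/(x^A,y^B,z^{C-1})$ --- is sound, and for $r\le A-2$ it does yield part (i): there $V_r$ is injective with unit Smith form, so its identity rows annihilate $J$ without touching $W_{r-1}$, and induction on $C$ closes the argument (a legitimate alternative to the paper's explicit lower-unitriangular submatrix in lex order). But for $A-1\le r$ the step you describe as ``clears the off-diagonal block against the partial identity and the surviving identity parts \dots an index bookkeeping identifies the leftover block with $M_r(A,B,C)$'' is not bookkeeping; it is the entire content of (ii), and nothing in the sketch produces the binomial entries. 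Concretely, if the off-diagonal block could simply be disposed of against the identity parts, the non-unit invariant factors of $U_r$ would be those of $W_{r-1}$, i.e.\ of $M_{r-1}(A,B,C-1)$, a $(B+C-r-2)\times(r-A+1)$ matrix with one column fewer than $M_r(A,B,C)$. Already for $A=B=C=2$, $r=m=1$ that block is empty while $\det U_1=\pm 2=\pm\binom{2}{1}$: both diagonal blocks have unit Smith form and $J$ is an identity, yet $\textsc{SNF}(U_1)=(1,1,2)$. So once $V_r$ is no longer injective, the columns of $J$ that cannot be cleared from above must interact with the reduction of $W_{r-1}$, and that interaction is precisely what creates the extra column of $\binom{A}{\cdot}$'s.

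Tracking this interaction is exactly what the paper's proof of (ii) does: it runs a pivoting algorithm on the marked matrix $U_r(y,z)$ (each $1$ replaced by $y$ or $z$ according to the multiplying variable, the $r-A+2$ columns indexed by $x^{A-1}$-monomials set aside), and proves an inner lemma, by the Pascal recursion $\mu=-y\mu_y-z\mu_z=(-1)^{A-\gamma-1}\binom{j+k}{j}y^jz^k$, that every entry surviving in those columns is a signed binomial coefficient; setting $y=z=1$ gives literally $M_r(A,B,C)$. Your induction on $C$ would need an equally explicit computation of the residual block --- carrying through not just the Smith forms of the diagonal blocks but the actual unimodular transformations, so that they can be composed with $J$ --- and you have flagged exactly this as the unresolved ``main obstacle.'' Until it is supplied, (ii) is unproved, and so is (iii), since it rests on (ii); I note that the rest of your (iii) (the $M_r$'s as windows $A_c$ of the fixed Pascal--Toeplitz matrix, the invariant-factor comparison via the Schur-polynomial lemma taken as known, and the $m-s\le A-2$ case from (i)) is correct and coincides with the paper's argument.
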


On the way to proving this theorem, we prove two lemmas.  The first
lemma provides a relationship between the Smith normal forms of
submatrices of an arbitrary Toeplitz matrix using Schur polynomials
(Lemma~\ref{l:toep}), and the second lemma proves an ``inverse''
Littlewood-Richardson rule (Lemma~\ref{l:main}). Both of these lemmas
might be of independent interest.

Our other main results relate to the middle map $U_m$ when $A+B+C$ is
even (and without loss of generality, $A \le B+C$ so that $r = m$ does
not fall in the trivial case (i) of Theorem~\ref{t:snf} above). In
this case, one can check that $h(m) = h(m+1)$, so that $U_m$ is
square. Li and Zanello proved the striking result that $\det(U_m)$, up
to sign, counts the number of plane partitions that fit in an $a
\times b \times c$ box where $a := \frac{A+B-C}{2}$, $b :=
\frac{A-B+C}{2}$, $c := \frac{-A+B+C}{2}$ so that $A = a+b$, $B =
a+c$, $C = b+c$. Their proof proceeded by evaluating
$$
\det(U_m) = \det(M_m(A,B,C))
$$
directly, and comparing the answer to known formulae for such plane
partitions.  We respond to their call for a more direct, combinatorial
explanation (see~\cite{LiZanello}) with the following:

\begin{thm}\label{t:pp}
  Expressed in the monomial $\ZZ$-basis for $R =
  \ZZ[x,y,z]/(x^{a+b},y^{a+c},z^{b+c})$ the map $U_m : R_m \to
  R_{m+1}$ has its determinant $\det(U_m)$ equal, up to sign, to its
  permanent $\perm(U_m)$, and each nonzero term in its permanent
  corresponds naturally to a plane partition in an $a \times b \times
  c$ box.
\end{thm}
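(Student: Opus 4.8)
The plan is to read off both assertions directly from the combinatorics of the $0$--$1$ matrix $U_m$. In the monomial bases the columns of $U_m$ are indexed by $B_m$, the rows by $B_{m+1}$, and the entry in row $\mu'$, column $\mu$ is $1$ exactly when $\mu'=x\mu$, $y\mu$, or $z\mu$ in $R$. Hence the nonzero terms of $\perm(U_m)$ are precisely the perfect matchings of the bipartite graph $G_m$ on $B_m\sqcup B_{m+1}$ whose edges are these covering relations; equivalently, they are the functions $d\colon B_m\to\{x,y,z\}$ for which $\mu\mapsto d(\mu)\,\mu$ is a bijection $B_m\to B_{m+1}$. Since $A=a+b$, $B=a+c$, $C=b+c$, a short inclusion--exclusion gives $h(m)=h(m+1)=\binom{a+b+c}{2}-\binom a2-\binom b2-\binom c2=ab+bc+ca$, so $U_m$ is indeed square.

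The key step is to identify $G_m$ with the triangle--adjacency graph of the semiregular hexagon $H=H(a,b,c)$ with successive side lengths $a,b,c,a,b,c$ drawn in the triangular lattice. Regard the monomial $x^iy^jz^k$ as the unit cell at $(i,j,k)\in\ZZ^3$; then $B_m$ consists of cells in the plane $i+j+k=m$ and $B_{m+1}$ of cells in $i+j+k=m+1$. Projecting $\ZZ^3$ along the direction $(1,1,1)$, the cells of $B_m$ land on the centroids of the upward unit triangles of a triangular--lattice region, the cells of $B_{m+1}$ land on the centroids of the downward unit triangles, and a covering relation $\mu\mapsto x\mu$ (resp.\ $y\mu$, $z\mu$) records one of the three ways an upward triangle shares an edge with a neighbouring downward triangle. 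The inequalities $i\le a+b-1$, $j\le a+c-1$, $k\le b+c-1$ cut this region down to exactly $H$. Consequently the perfect matchings of $G_m$ are exactly the rhombus (lozenge) tilings of $H$, which by the classical correspondence are in bijection with plane partitions contained in the $a\times b\times c$ box; this establishes that each nonzero term of $\perm(U_m)$ corresponds naturally to such a plane partition. I expect this identification — in particular verifying that the planar embedding of $G_m$ it produces is the honeycomb one, with every bounded face a hexagon (one can also check directly that $G_m$ has girth $6$) — to be the main technical point.

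For $\det(U_m)=\pm\perm(U_m)$ it suffices to show that all perfect matchings of $G_m$ induce permutations of the same sign, relative to the fixed orderings of $B_m$ and $B_{m+1}$. Given two matchings $\mathcal M,\mathcal M'$, their symmetric difference is a disjoint union of simple cycles $C_1,\dots,C_t$ in $G_m$, and $\operatorname{sgn}(\mathcal M)\operatorname{sgn}(\mathcal M')=\prod_{s}(-1)^{|C_s|/2-1}$, so it is enough that $|C_s|\equiv 2\pmod 4$ for every $s$. Each $C_s$ is a simple closed curve in the planar graph $G_m$ bounding a disk $D_s$; since $C_s$ is alternating for $\mathcal M$, its vertices are $\mathcal M$-matched along $C_s$, and because no edge of a planar graph crosses $C_s$, every vertex interior to $D_s$ is $\mathcal M$-matched to another interior vertex, so $D_s$ contains an even number $V_s$ of interior vertices. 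Writing $f_s$ for the number of (hexagonal) bounded faces inside $C_s$, Euler's formula applied to the subcomplex of $G_m$ on $\overline{D_s}$ gives $|C_s|=6f_s-2(V_s+f_s-1)=4f_s-2V_s+2\equiv 2\pmod 4$. Hence the sign is constant and $\det(U_m)=\pm\perm(U_m)$; together with the previous paragraph this also re-derives the Li--Zanello identity combinatorially.
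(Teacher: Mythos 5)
Your proposal is correct, and its first half runs along the same lines as the paper: you identify the nonzero permanent terms with perfect matchings of the bipartite graph on $B_m\sqcup B_{m+1}$, realize that graph as the triangle-adjacency (honeycomb) graph of the semiregular hexagon $H(a,b,c)$ in the triangular lattice, and invoke the classical bijection between lozenge tilings of $H$ and plane partitions in an $a\times b\times c$ box; the paper does exactly this, only phrased by writing the degree-$m$ and degree-$(m+1)$ monomials into the upward and downward triangles of a side-$(a+b+c)$ triangle and truncating corners of sizes $a,b,c$, rather than by your projection of $\ZZ^3$ along $(1,1,1)$. Where you genuinely diverge is the sign argument. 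The paper shows that adding or removing a single box changes the matching bijection by a $3$-cycle, hence preserves the sign of the corresponding permutation, and concludes because any plane partition can be reached from any other (e.g.\ through the empty one) by such moves; this is short but leans on the connectedness of the set of plane partitions under single-box moves. You instead run the Kasteleyn-style argument: decompose the symmetric difference of two matchings into alternating cycles, note the sign ratio is $\prod_s(-1)^{|C_s|/2-1}$, and prove $|C_s|\equiv 2\pmod 4$ from planarity (interior vertices are matched among themselves, so $V_s$ is even) together with Euler's formula and the fact that all bounded faces enclosed are hexagons, giving $|C_s|=4f_s-2V_s+2$. Your computation checks out, and this route is self-contained (no connectivity of the flip graph needed) and makes transparent exactly which structural feature forces $\det=\pm\perm$, namely that all bounded faces have length $\equiv 2\pmod 4$; the trade-off is that it hinges on verifying the honeycomb planar embedding of $G_m$ with hexagonal faces, which you rightly flag as the main technical point, whereas the paper's box-removal argument needs only the local effect of a single move (a hexagon flip is a $3$-cycle) plus the easy observation that every plane partition reduces to the empty one.
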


The same ideas then allow us to express the counts for other symmetry
classes of plane partitions, namely those which are cyclically
symmetric (class~$3$) or transpose complementary (class~$6$) or
cyclically symmetric and transpose complementary (class~$8$), in terms
of the determinant of $U_m$ when restricted to certain natural
$\ZZ$-submodules of~$R$.

\begin{thm}\label{t:cspp}
  Assuming $a=b=c$, let $C_3 = \ZZ/3\ZZ = \{1,\rho,\rho^2\}$ act on $R
  = \ZZ[x,y,z]/(x^{2a},y^{2a},z^{2a})$ by cycling the variables $x
  \stackrel{\rho}{\to} y \stackrel{\rho}{\to} z \stackrel{\rho}{\to}
  x$. Then the map $U_m|_{R^{C_3}}$ restricted to the $m$-th
  homogeneous component of the $C_3$-invariant subring $R^{C_3}$ has
  $\det(U_m|_{R^{C_3}})$ equal, up to sign, to the number of
  cyclically symmetric plane partitions in an $a \times a \times a$
  box.
\end{thm}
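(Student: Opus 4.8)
\emph{Proof proposal.}
The plan is to carry out the bijective argument behind Theorem~\ref{t:pp} $C_3$-equivariantly. Since $A=B=C=2a$ we have $e=6a-3$ and $m=\floor{\frac{e-1}{2}}=3a-2$; as $3a-2$ and $3a-1$ are not divisible by $3$, no monomial $x^py^qz^r$ of degree $m$ or $m+1$ has $p=q=r$, so $C_3$ acts freely on the monomial bases $B_m$ and $B_{m+1}$ of $R_m$ and $R_{m+1}$. Because $A+B+C=6a$ is even we have $h(m)=h(m+1)$, and freeness gives $3\mid h(m)$; the orbit sums $\bar w:=w+\rho w+\rho^2 w$, with $w$ running over a chosen set of orbit representatives, therefore form $\ZZ$-bases of $R^{C_3}_m$ and $R^{C_3}_{m+1}$. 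Since $x+y+z$ is $C_3$-invariant, $U_m(\bar w)=\overline{(x+y+z)w}$, so $U_m|_{R^{C_3}}$ is represented by a square matrix $\widetilde U$ of size $h(m)/3$ with nonnegative integer entries $(\widetilde U)_{\bar v,\bar w}=\#\{\ast\in\{x,y,z\}:\ast w\neq 0\text{ in }R\text{ and }\ast w\in C_3\cdot v\}$. A short case analysis shows each entry lies in $\{0,1,2\}$, the value $2$ occurring only in the column indexed by the orbit of $x^{a-1}y^{a-1}z^a$; thus $\widetilde U$ is genuinely not a $0/1$ matrix, which is why the permanent is the natural object here.

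First I would identify $\perm(\widetilde U)$ with the number of \emph{$C_3$-invariant} perfect matchings of the bipartite ``multiplication graph'' $G$ on $B_m\sqcup B_{m+1}$, in which $w$ is joined to each nonzero $\ast w$. Such a matching is determined by its restriction to a chosen set of orbit representatives of $B_m$, i.e.\ by a choice, for each representative $w$, of a variable $\ast_w$ with $\ast_w w\neq 0$; conversely, since $C_3$ acts freely on $B_{m+1}$ and the induced map of orbits is forced to be a bijection, any such choice extends $C_3$-equivariantly to a necessarily injective, hence perfect, matching. Counting these, the number of $C_3$-invariant matchings of $G$ is $\sum_\sigma\prod_w(\#\text{ moves from }w\text{ into the orbit }\sigma(\bar w))=\sum_\sigma\prod_w(\widetilde U)_{\sigma(\bar w),\bar w}=\perm(\widetilde U)$, where $\sigma$ runs over bijections from column orbits to row orbits; the weight-$2$ entries record exactly the two inequivalent equivariant extensions available along the exceptional orbit.

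Next I would apply Theorem~\ref{t:pp}, whose proof furnishes a natural bijection $\beta$ from the nonzero terms of $\perm(U_m)$ --- equivalently, from perfect matchings of $G$ --- to plane partitions in the $a\times b\times c$ box. The key point to check is that, when $a=b=c$, $\beta$ intertwines the $\rho$-action on matchings of $G$ with the cyclic rotation of the $a\times a\times a$ box defining cyclic symmetry of plane partitions; this should hold because $\beta$ is built symmetrically out of the three variables. Granting it, $\beta$ restricts to a bijection between $C_3$-invariant matchings of $G$ and cyclically symmetric plane partitions in the $a\times a\times a$ box, so $\perm(\widetilde U)$ counts precisely those CSPPs.

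It remains to see that $\det(\widetilde U)=\pm\perm(\widetilde U)$. Order $B_m$ and $B_{m+1}$ so that each $C_3$-orbit occurs as a consecutive triple $w,\rho w,\rho^2w$; the assertion $\det(U_m)=\pm\perm(U_m)$ of Theorem~\ref{t:pp} is independent of the chosen ordering and amounts to saying that all transversals of the $0/1$ matrix $U_m$ carry a common sign $\epsilon$. Relative to these orderings, a $C_3$-invariant transversal $\mu$ of $U_m$ is a block permutation of the orbit-triples --- along the induced bijection $\bar\mu$ of orbits --- followed by a cyclic shift inside each triple; since every cyclic shift of a $3$-element set is even and a permutation of blocks of size $3$ has the same sign as the underlying block permutation, $\operatorname{sgn}(\mu)=\operatorname{sgn}(\bar\mu)$. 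As each transversal of $\widetilde U$ equals $\bar\mu$ for some such $\mu$, all transversals of $\widetilde U$ carry the sign $\epsilon$, so $\det(\widetilde U)=\epsilon\,\perm(\widetilde U)$. Combining these yields $\det(U_m|_{R^{C_3}})=\pm\perm(\widetilde U)=\pm\,\#\{\text{CSPP in the }a\times a\times a\text{ box}\}$. The main obstacle is the $C_3$-equivariance of $\beta$; should it fail to be manifestly symmetric in $x,y,z$, the fallback is to rerun the proof of Theorem~\ref{t:pp} verbatim in the orbit bases, replacing ``perfect matching of $G$'' by ``$C_3$-invariant perfect matching of $G$'' and ``plane partition'' by ``cyclically symmetric plane partition'' throughout, which the bookkeeping above shows is the only structural change needed.
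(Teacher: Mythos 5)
Your proposal is correct, and its first half is in substance the paper's argument: you pass to the orbit-sum basis of $R^{C_3}$, observe that $C_3$ acts freely on $B_m$ and $B_{m+1}$ (degrees $3a-2$, $3a-1$ are not multiples of $3$), identify the single entry $2$ coming from the central hexagon's orbit, and show that the permanent of the restricted matrix counts $C_3$-invariant perfect matchings, hence cyclically symmetric plane partitions; the paper phrases the same content via the quotient graph $G/\langle\rho\rangle$, whose adjacency matrix (with one doubled edge) is exactly your $\widetilde U$. Where you genuinely diverge is the sign step. The paper proves $\det=\pm\perm$ for the restricted matrix by a fresh moves argument on the quotient graph: removing three blocks in a $\rho$-orbit acts by a $3$-cycle on the matching, and removing the central block merely switches between the two parallel edges, so all matchings of $G/\langle\rho\rangle$ carry the same sign — an argument that implicitly uses connectivity of CSPPs under these symmetric moves. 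You instead lift an arbitrary transversal of $\widetilde U$ to a $C_3$-invariant transversal of $U_m$ (possible by free-ness and equivariant extension), invoke the uniform-sign property of transversals of $U_m$ already established in Theorem~\ref{t:pp}, and transfer the sign down using the parity identity $\operatorname{sgn}(\mu)=\operatorname{sgn}(\bar\mu)$ for blocks of odd size $3$ with internal cyclic shifts. This buys you a cleaner reduction — no new connectivity argument is needed, only Theorem~\ref{t:pp} plus elementary parity — while the paper's version is self-contained on the quotient graph and sets up the template it reuses for Theorems~\ref{t:tcpp} and~\ref{t:cstcpp}. Your one flagged concern, equivariance of the matchings--to--plane-partitions bijection, is indeed harmless: the paper identifies $\rho$ with $120^\circ$ rotation of the rhombus tiling, which is precisely the symmetry defining class~3.
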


\begin{thm}\label{t:tcpp}
  Assuming $a=b$ and the product $abc$ is even, let $C_2 = \ZZ/2\ZZ =
  \{1,\tau\kappa\}$ act on $R = \ZZ[x,y,z]/(x^{2a},y^{a+c},z^{a+c})$
  by swapping $y \leftrightarrow z$.  Then the map $U_m|_{R^{C_2,-}}$
  restricted to the $m$-th homogeneous component of the anti-invariant
  submodule $R^{C_2,-} := \{f \in R : \tau\kappa(f) = -f\}$ has
  $\det(U_m|_{R^{C_2,-}})$ equal, up to sign, to the number of
  transpose complementary plane partitions in an $a \times a \times c$
  box.
\end{thm}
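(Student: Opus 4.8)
The plan is to run the bijective argument behind Theorem~\ref{t:pp} inside the anti-invariant summand, using a monomial-type $\ZZ$-basis of $R^{C_2,-}$ adapted to $\tau\kappa\colon y\leftrightarrow z$. First I would fix, in each degree $r$, the basis of $(R^{C_2,-})_r$ consisting of the antisymmetrized monomials $[i,j,k]:=x^iy^jz^k-x^iy^kz^j$ with $i+j+k=r$, $j>k$, and all exponents in the allowed ranges $0\le i\le 2a-1$, $0\le j,k\le a+c-1$ (the $\tau\kappa$-fixed monomials $x^iy^jz^j$ contribute nothing). A direct computation gives
\[
(x+y+z)\cdot[i,j,k]=[i{+}1,j,k]+[i,j{+}1,k]+[i,j,k{+}1],
\]
where each summand whose exponents leave the box is dropped, and the last summand is also dropped---it is literally zero---exactly when $j=k+1$, i.e.\ when the ``$z$-move'' would land on the reflection hyperplane $j=k$. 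So, just like $U_m$ itself in the monomial basis, $U_m|_{R^{C_2,-}}$ is a $0/1$ matrix, and a short parity count (treating $c$ even and $c$ odd separately) shows it is square. Its permanent therefore counts ``flows'': assignments to each off-diagonal degree-$m$ cell $(i,j,k)$ with $j>k$ of one of the moves $x$, $y$, $z$ so that the images run bijectively over the off-diagonal degree-$(m+1)$ cells with $j>k$, the $z$-move being forbidden on cells with $j=k+1$.

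Next I would transport the combinatorial encoding used for Theorem~\ref{t:pp}. That proof matches the flows counted by $\perm(U_m)$ on the full box (here $A=2a$, $B=C=a+c$) with the plane partitions in the $a\times a\times c$ MacMahon box, through an intermediate object such as a lozenge tiling or a family of non-intersecting lattice paths. I would track the automorphism $\tau\kappa$---well defined on $R$ precisely because $B=C$---through that encoding; swapping $y\leftrightarrow z$ interchanges the two equal parameters and should act on the plane-partition side as the transpose-complement involution $\pi_{pq}\mapsto c-\pi_{a+1-q,\,a+1-p}$, and verifying this identification against the specific encoding is part of the work. A flow counted by $\perm(U_m|_{R^{C_2,-}})$ is exactly a $\tau\kappa$-equivariant flow on the off-diagonal cells, and ``folding'' such data across the hyperplane $j=k$ is precisely the operation that reconstructs a transpose-complementary $\pi$ from its half. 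The upshot should be a bijection between the nonzero terms of $\perm(U_m|_{R^{C_2,-}})$ and the transpose-complementary plane partitions in the $a\times a\times c$ box; the hypothesis that $abc$ is even is the natural non-degeneracy condition here, since a transpose-complementary $\pi$ satisfies $2|\pi|=a^2c$.

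Finally I would establish $\det(U_m|_{R^{C_2,-}})=\pm\,\perm(U_m|_{R^{C_2,-}})$, i.e.\ that all nonzero permanent terms carry the same permutation sign. As the restricted matrix is again $0/1$ with the same local ``three moves'' structure as $U_m$, I expect the sign argument used for Theorem~\ref{t:pp} to carry over with only cosmetic changes: one exhibits orderings of the off-diagonal degree-$m$ and degree-$(m+1)$ cells under which every legal flow induces a permutation that is a product of commuting elementary transpositions, or equivalently checks that the bipartite graph of $U_m|_{R^{C_2,-}}$ satisfies the planarity/consistency condition forcing $|\det|=\perm$. Combined with the previous paragraph this yields $|\det(U_m|_{R^{C_2,-}})|=\#\{\text{transpose-complementary PP in }a\times a\times c\}$.

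The main obstacle is the middle step: making the folding across $j=k$ precise, matching the algebraic fact ``the $z$-move vanishes at $j=k+1$'' with the correct boundary behavior of the folded tiling/path model, and---crucially---verifying that it is the \emph{anti}-invariant submodule (not the invariant one) that reproduces the honest transpose-complementary count, including getting the sign twist and parity bookkeeping right along the hyperplane. This is exactly the sort of subtlety that arises whenever one folds a tiling model along a symmetry axis to access a symmetry class of plane partitions. A secondary point is confirming that the $\det=\pm\perm$ argument for Theorem~\ref{t:pp} really does survive restriction to $R^{C_2,-}$.
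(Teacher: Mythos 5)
Your plan follows the paper's proof essentially step for step: the same antisymmetrized basis $[i,j,k]=x^iy^jz^k-x^iy^kz^j$ with $j>k$, the same observation that the $z$-move vanishes exactly when $j=k+1$ (so the restricted matrix is $0/1$), the same folding across the $j=k$ axis (which the paper makes precise by noting that axis vertices $x^iy^jz^j$ are forced to match with $x^{i+1}y^jz^j$ in any flip-symmetric matching, so they and their edges can be erased to form a quotient graph $G'$ whose matchings biject with flip-symmetric matchings of the hexagon, hence with transpose-complementary plane partitions), and the same permanent-equals-determinant endgame. The only slight divergence is the sign argument: rather than a Kasteleyn-style planarity check, the paper moves a single visible block to its transpose-complementary partner, a local $3$-cycle on $G'$ that preserves parity and connects all such matchings.
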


\begin{thm}\label{t:cstcpp}
  Assuming $a=b=c$ are all even, let $C_2,C_3$ act on $R = \ZZ[x,y,z]/
  (x^{2a},y^{2a},z^{2a})$ as before. Then the map $U_m|_{R^{C_3} \cap
    R^{C_2,-}}$ restricted to the $m$-th homogeneous component of the
  intersection $R^{C_3} \cap R^{C_2,-}$ has $\det(U_m|_{R^{C_3} \cap
    R^{C_2,-}})$ equal, up to sign, to the number of cyclically
  symmetric transpose complementary plane partitions in an $a \times a
  \times a$ box.
\end{thm}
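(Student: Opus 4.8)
The plan is to run the arguments behind Theorems~\ref{t:pp}, \ref{t:cspp} and~\ref{t:tcpp} simultaneously. Since $x+y+z$ is fixed by the full symmetric group permuting $x,y,z$, multiplication by it commutes with both the cyclic action of $C_3$ and the transposition $\tau\kappa$, hence $U_m$ preserves every isotypic component of $R$ for the action of the group $\langle C_3, \tau\kappa\rangle = S_3$. The submodule $R^{C_3} \cap R^{C_2,-}$ is exactly the sign-isotypic component $R^{\mathrm{sgn}}$ of this $S_3$-action: invariance under the alternating subgroup $C_3$ together with anti-invariance under one transposition forces $\sigma f = \mathrm{sgn}(\sigma)\,f$ for all $\sigma \in S_3$ (the three transpositions being $C_3$-conjugate). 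So $U_m$ restricts to a map $(R^{\mathrm{sgn}})_m \to (R^{\mathrm{sgn}})_{m+1}$, and the first thing I would check is that these two spaces have equal dimension -- this follows from the self-duality $U_{e-r}^t = U_r$, which is compatible with grading reversal and the $S_3$-action, or simply by counting alternants -- so that $\det(U_m|_{R^{C_3}\cap R^{C_2,-}})$ is well defined.

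Next I would fix the natural ``alternant'' $\ZZ$-basis of $R^{\mathrm{sgn}}$: for each triple of exponents $\lambda = (\lambda_1 > \lambda_2 > \lambda_3 \ge 0)$ with $\lambda_1 \le 2a-1$, put $e_\lambda := \sum_{\sigma \in S_3} \mathrm{sgn}(\sigma)\,\sigma(x^{\lambda_1}y^{\lambda_2}z^{\lambda_3})$. Over $\ZZ$ this basis is clean because any monomial with a repeated exponent is fixed by a transposition, hence killed by antisymmetrization and forced to have coefficient $0$ in any element of $R^{\mathrm{sgn}}$; this is also precisely why the hypothesis makes the relevant ``box'' genuinely three-dimensional, and why no separate kernel/cokernel bookkeeping is needed to keep the restricted matrix square. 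In this basis the entries of $U_m|_{R^{\mathrm{sgn}}}$ lie in $\{0,\pm 1\}$, and the Leibniz expansion of the determinant becomes a signed sum over families ``choose, for each degree-$m$ alternant, one neighbouring degree-$(m+1)$ alternant, bijectively''. Such a family is the same datum as a perfect matching of the bipartite monomial incidence graph on $B_m \sqcup B_{m+1}$ used in the proof of Theorem~\ref{t:pp} that is equivariant (with the $\mathrm{sgn}$-twist) for the $S_3$-action, equivalently a perfect matching of a fundamental domain for that action. The sign-coherence established for Theorem~\ref{t:pp}, combined with the sign bookkeeping already carried out for the $C_3$-factor in Theorem~\ref{t:cspp} and for the $\mathrm{sgn}$-twist in Theorem~\ref{t:tcpp}, should show that all nonzero terms of this determinant carry one and the same sign; then $\det(U_m|_{R^{C_3}\cap R^{C_2,-}})$ equals, up to sign, the number of these symmetric matchings.

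Finally I would identify these matchings with cyclically symmetric transpose complementary plane partitions in an $a\times a\times a$ box. Under the bijection behind Theorem~\ref{t:pp}, an unrestricted matching encodes a plane partition in the $a\times b\times c$ box (here $a=b=c$); the $C_3$-equivariance of the matching translates, exactly as in Theorem~\ref{t:cspp}, into cyclic symmetry of the plane partition, and the $\tau\kappa$-anti-invariance translates, exactly as in Theorem~\ref{t:tcpp}, into the transpose-complementary property. A matching fixed by both symmetries is therefore the encoding of a plane partition in symmetry class~$8$, and conversely, because on the rhombus-tiling / non-intersecting-lattice-path model the two symmetries act through a $3$-fold rotation and a transpose-complement involution that generate precisely the subgroup whose invariant tilings are by definition the CSTCPPs. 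The hypothesis that $a=b=c$ are all even is what makes both group actions simultaneously defined on $R$ (it forces $A=B=C=2a$) and guarantees that the fundamental domain is an honest $a\times a\times a$ box with no boundary degeneracies, so the count is exact.

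I expect the main obstacle to be the second paragraph: verifying that, after restricting to the $\mathrm{sgn}$-isotypic component -- where matrix entries genuinely acquire minus signs from the $\tau\kappa$-twist -- the determinant's nonzero terms remain of a single sign. The risk is a clash between the permanent-to-determinant sign analysis of Theorem~\ref{t:pp} and the sign character of $S_3$; resolving it requires tracking how a transposition acts on the permutation attached to an equivariant matching in the Leibniz expansion and checking that its contribution to $\mathrm{sgn}$ is cancelled by the matrix-entry signs, uniformly over all such matchings. Once that is in place, everything else is either a direct consequence of the three earlier theorems or a routine compatibility check.
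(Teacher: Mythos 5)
Your proposal is correct and follows essentially the same route as the paper, which presents this result as a hybrid of the proofs of Theorems~\ref{t:cspp} and~\ref{t:tcpp}: it works in the same alternant basis $\sum_{\sigma}\mathrm{sgn}(\sigma)\,\sigma(x^iy^jz^k)$ (your $e_\lambda$) and settles the one obstacle you flag---sign coherence of the determinant terms---by connecting any two CSTCPPs through the move that shifts three visible blocks in a single $\rho$-orbit to their transpose-complementary locations, the hybrid of the sign-preserving moves used in the two earlier proofs. That flagged worry is in fact milder than you fear: multiplying an alternant indexed by a strictly decreasing exponent triple by $x+y+z$ never forces a re-sorting (each raised exponent either leaves the triple strictly decreasing or creates a repeat and the term vanishes), so the restricted matrix has entries in $\{0,1\}$ and its permanent counts the equivariant matchings directly.
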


\section{Proof of Theorem~\ref{t:snf}}

\subsection*{Proof of Theorem~\ref{t:snf}, part (i)}
Recall the statement of Theorem~\ref{t:snf}, part (i):

\begin{fthm}
  For $0 \le r < A-1$, $\textsc{SNF}(U_r) = \{
  \underbrace{1,\ldots,1}_{h(r) \text{ ones}} \}$. In other words, the
  cokernel of $U_r$ is free of rank $h(r+1) - h(r)$.
\end{fthm}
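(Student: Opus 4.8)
The plan is to produce, inside the integer matrix of $U_r$, a square submatrix of size $h(r)$ whose determinant is $\pm 1$, and then invoke the description of Smith normal form via determinantal divisors: if $d_k$ denotes the gcd of all $k\times k$ minors of $U_r$ (with $d_0 := 1$), then $\textsc{SNF}(U_r) = (d_1,\, d_2/d_1,\, \ldots,\, d_{h(r)}/d_{h(r)-1})$, and one always has $d_{k}\mid d_{k+1}$. Thus a single unimodular $h(r)\times h(r)$ minor forces $d_1 = \cdots = d_{h(r)} = 1$, hence every \textsc{SNF} entry equals $1$. The very same minor, being nonzero modulo every prime, shows simultaneously that $U_r$ is injective over an arbitrary field; and an all-ones \textsc{SNF} gives $\coker U_r \cong \ZZ^{h(r+1)-h(r)}$, free, which is the reformulation in the statement.

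To build the submatrix, recall $B_r = \{x^iy^jz^k : i+j+k = r,\ i<A,\ j<B,\ k<C\}$, and define $\phi : B_r \to B_{r+1}$ by $\phi(x^iy^jz^k) = x^{i+1}y^jz^k$. This is well defined: the hypothesis $r \le A-2$ gives $i \le r \le A-2$, so $i+1 \le A-1 < A$, while the bounds $j<B$ and $k<C$ are inherited, so $\phi(m)\in B_{r+1}$; and $\phi$ is visibly injective. (This is the only place the bound $r\le A-2$ is used, and it is precisely why the clean statement stops at $r=A-2$: for $r=A-1$ one could have $i=A-1$ and $\phi(m)=0$.) Let $V = (V_{m',m})_{m',m\in B_r}$ be the square submatrix of $U_r$ consisting of the rows indexed by the subset $\phi(B_r)\subseteq B_{r+1}$ and all $h(r)$ columns; concretely, $V_{m',m}$ is the coefficient of $\phi(m')$ in $(x+y+z)\cdot m$ computed in $R$.

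Next I would order $B_r$ by weakly decreasing $x$-degree, ties broken arbitrarily, and check that $V$ is triangular with unit diagonal. For $m = x^iy^jz^k$ the product $(x+y+z)m$ equals $x^{i+1}y^jz^k + x^iy^{j+1}z^k + x^iy^jz^{k+1}$ in $R$ (the last two terms possibly zero, with no cancellation since the surviving monomials are distinct), whose unique term of top $x$-degree $i+1$ is $\phi(m)$; hence $V_{m,m}=1$. If $m'\ne m$ and $V_{m',m}\ne 0$, then $\phi(m')$ is one of the two terms of $x$-degree $i$, so writing $m' = x^{i'}y^{j'}z^{k'}$ we get $i'+1 = i$, i.e.\ the $x$-degree of $m'$ is strictly less than that of $m$; thus $m'$ comes strictly after $m$ in the order, and $V$ is lower triangular with $1$'s on the diagonal, so $\det V = 1$. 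Combined with the determinantal-divisor remark, this finishes part (i).

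I do not expect a real obstacle here — this is the easiest of the three cases of Theorem~\ref{t:snf}, and the only point requiring any care is confirming that the single inequality $i+1<A$ is exactly what makes $\phi$ land in $B_{r+1}$. The substantial work lies ahead, in parts (ii) and (iii): matching the non-unit \textsc{SNF} entries of $U_r$ with those of the binomial matrix $M_r(A,B,C)$ and propagating the middle data outward, which is where the Toeplitz/Schur-polynomial lemma and the inverse Littlewood--Richardson rule enter.
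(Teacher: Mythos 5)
Your proposal is correct and is essentially the paper's own argument: the paper likewise pairs each column $x^iy^jz^k$ with the row $x^{i+1}y^jz^k$ (nonzero exactly because $r\le A-2$) and orders columns by decreasing $x$-power to exhibit a lower unitriangular maximal submatrix, which forces all Smith entries to be $1$. Your only additions are cosmetic: you spell out the determinantal-divisor step and note that ties in $x$-degree may be broken arbitrarily rather than lexicographically.
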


\begin{proof}
  Let $B_s$ denote the monomial basis of $R_s$. We represent $U_r$ by
  a matrix whose columns and rows are indexed by elements of $B_r$ and
  $B_{r+1}$, respectively.  We will prove our claim by showing that,
  for some ordering of the rows and columns of the matrix of $U_r$,
  there exists a lower unitriangular maximal submatrix. To do this, it
  suffices to show that there exists an ordering of the columns such
  that, for each column $j$, there exists a row $i$ such that the
  $(i,j)$ entry is $1$ and the entries to the right are~$0$. We
  arrange the columns lexicographically, so that monomials with higher
  $x$ power are on the left, and if $x$ powers are equal, then we use
  the power of $y$ to break ties, and then the power of $z$ to break
  remaining ties. For example, for $A=B=C=4$ and $r = 4$, the
  monomials would be ordered
$$
x^3y \ \ x^3z \ \ x^2y^2 \ \ x^2yz \ \ x^2z^2 \ \ xy^3 \ \ xy^2z \ \
xyz^2 \ \ xz^3 \ \ y^3z \ \ y^2z^2 \ \ yz^3.
$$
For any given monomial $x^iy^jz^k$ indexing a column, the monomial
$x^{i+1}y^jz^k$ is nonzero in the quotient ring
$\kk[x,y,z]/(x^A,y^B,z^C)$ since $i+1 \le i+j+k+1 < A$. Therefore
$x^{i+1}y^jz^k$ indexes a row. The entry of this row in column
$x^iy^jz^k$ is $1$, and any other column with a $1$ in this row must
be indexed by either $x^{i+1}y^{j-1}z^k$ or $x^{i+1}y^jz^{k-1}$, both
of which lie to the left of $x^iy^jz^k$.
\end{proof}

\begin{remark}
  From the previous proposition, it follows that all the maps $U_r$ of
  $\kk[x,y]/(x^A, y^B)$ (setting $C = 1$) and $\kk[x]/(x^A)$ (setting
  $B = C = 1$) have free cokernel.  Note that this immediately proves
  the Weak Lefschetz Property for (essentially all) codimension $2$
  monomial Artinian complete intersections, regardless of
  characteristic.  (To be precise, we require the base field to be
  infinite.  We can define the Weak Lefschetz Property over finite
  fields, but it becomes too pathological to be of interest.)  More
  generally, it turns out that the WLP holds for any codimension $2$
  standard graded Artinian algebra \cite{MigZan}.

  In the other extreme (with an arbitrary number of variables and
  bounded powers), Wilson completely determined the cokernel of the
  maps $U_r$ for $\kk[x_1,\ldots,x_n]/(x_1^2,\ldots,x_n^2)$
  \cite{Wilson}.  Moreover, Hara and Watanabe provided an elementary
  proof of Wilson's result and used it to show the Strong Lefschetz
  Property for $\kk[x_1,\ldots,x_n]/(x_1^2,\ldots,x_n^2)$ over certain
  fields \cite{HW}.  We warn the reader that Hara and Watanabe's
  result is not characteristic free.  For instance, when $n = 2$, the
  SLP result for $\kk[x_1, x_2]/(x_1^2, x_2^2)$ fails in
  characteristic $2$, as multiplication by $(x+y)^2$ is not injective.
\end{remark}

Things get interesting when $A-1 \le r \le \frac{e}{2}$. The up maps
are no longer necessarily injective over arbitrary fields. However, we
can still prove a least upper bound on the number of non-unit elements
in $\textsc{SNF}(U_r)$. In fact, we have the following

\subsection*{Proof of Theorem~\ref{t:snf}, part (ii)}
Recall the statement of Theorem~\ref{t:snf}, part (ii):

\begin{fthm}
  For $A-1 \le r \le \floor{\frac{e-1}{2}}$, the non-unit entries in
  $\textsc{SNF}(U_r)$ are the same as the non-unit entries of
  $\textsc{SNF}(M_r(A,B,C))$ where
  \begin{equation}\label{eq:MR}
  M_r(A,B,C) := \left( \ {A \choose r-B+i-j+2} \ \right)_{
  \substack{
  i = 1,\ldots,B+C-r-2 \\
  \hspace{-1.35em} j = 1,\ldots,r-A+2
  }}.
  \end{equation}
  In particular, there are at most $r-A+2$ such non-unit entries, so
  that for any field~$\kk$, the map $U_r$ has nullity at most $r-A+2$.
\end{fthm}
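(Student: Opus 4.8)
The plan is to linearize $U_r$ via the $\ZZ$-module decomposition $R=\bigoplus_{i=0}^{A-1}x^iS$, where $S:=\ZZ[y,z]/(y^B,z^C)$ and $x^iS_t$ is placed in degree $i+t$. Under this splitting, multiplication by $x+y+z$ becomes ``multiplication by $x$'' --- which carries $x^iS_{r-i}$ identically onto the summand $x^{i+1}S_{r-i}$ of $R_{r+1}$ for $i\le A-2$, and kills $x^{A-1}S_{r-A+1}$ since $x\cdot x^{A-1}=0$ --- plus ``multiplication by $y+z$'', which stays inside each $x^iS$. First I would check, using $A\ge B\ge C\ge1$ together with $A-1\le r\le\floor{\frac{e-1}{2}}$ (which force in particular $r<B+C-2$, $r\ge B-1$, and $r\le A+C-2$), that $0\le r-i\le B+C-2$ and $0\le r+1-i\le B+C-2$ for every $0\le i\le A-1$. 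Hence $R_r\cong\bigoplus_{i=0}^{A-1}S_{r-i}$ and $R_{r+1}\cong\bigoplus_{i=0}^{A-1}S_{r+1-i}$ each have exactly $A$ nonzero summands, and in these bases $U_r$ is the block lower-bidiagonal matrix with diagonal blocks $M_i:=\big((y+z)\cdot\big)\colon S_{r-i}\to S_{r-i+1}$ and subdiagonal blocks equal to the genuine identities $S_{r-i}\xrightarrow{\ \sim\ }S_{r-i}$.

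Next I would peel off the unitriangular part. Reordering the block columns (source summands) as $(0,1,\dots,A-2\mid A-1)$ and the block rows (target summands) as $(1,2,\dots,A-1\mid 0)$ puts $U_r$ in the form $\left(\begin{smallmatrix}L&P\\N&0\end{smallmatrix}\right)$, where $N=(M_0\ 0\ \cdots\ 0)$, $P=(0\ \cdots\ 0\ M_{A-1})^{t}$, and $L$ is the $\{1,\dots,A-1\}\times\{0,\dots,A-2\}$ block, which becomes block upper-unitriangular once its block columns are relabeled $1,\dots,A-1$; thus $\det L=\pm1$ and $L$ is invertible over $\ZZ$. A block Gaussian elimination, legitimate over $\ZZ$ because $L^{-1}$ is integral, then shows $U_r$ is $\GL(\ZZ)$-equivalent to $\operatorname{diag}(L,\,-NL^{-1}P)$, hence to $\operatorname{diag}(I,\,-NL^{-1}P)$. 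Back-substitution along the bidiagonal solves $Lx=P$ with $x_i=(-1)^{A-2-i}M_{i+1}M_{i+2}\cdots M_{A-1}$, so $-NL^{-1}P=-M_0x_0=(-1)^{A-1}M_0M_1\cdots M_{A-1}$, which is $(-1)^{A-1}$ times multiplication by $(y+z)^A$, viewed as a map $S_{r-A+1}\to S_{r+1}$. Consequently the non-unit entries of $\textsc{SNF}(U_r)$ are precisely those of the matrix $W$ of $(y+z)^A\colon S_{r-A+1}\to S_{r+1}$ in monomial bases.

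Finally I would identify $W$ with $M_r(A,B,C)$ up to $\GL(\ZZ)$-equivalence. Writing the monomial bases of $S_{r-A+1}$ and $S_{r+1}$ as $\{y^jz^{r-A+1-j}\}$ and $\{y^{j'}z^{r+1-j'}\}$, the same inequalities pin the ranges to $0\le j\le r-A+1$ and $r-C+2\le j'\le B-1$ --- i.e.\ $r-A+2$ columns and $B+C-r-2$ rows, matching the shape in \eqref{eq:MR} --- while the $(j',j)$-entry of $W$ is $\binom{A}{j'-j}$. Reversing the order of \emph{both} monomial bases and invoking $\binom{A}{m}=\binom{A}{A-m}$ turns this array into $\big(\binom{A}{r-B+i-j+2}\big)$ with the indexing of \eqref{eq:MR}, namely $M_r(A,B,C)$ itself; since reversing a basis is a permutation, $\textsc{SNF}(W)=\textsc{SNF}(M_r(A,B,C))$. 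This yields the asserted coincidence of non-unit SNF entries; as $M_r(A,B,C)$ has $r-A+2$ columns there are at most that many, and base-changing the equivalence $U_r\sim\operatorname{diag}(I,\pm W)$ to any field $\kk$ gives $\dim_\kk\ker(U_r\otimes_\ZZ\kk)=\dim_\kk\ker(W\otimes_\ZZ\kk)\le r-A+2$.

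The only delicate points are the two rounds of index bookkeeping (first and third paragraphs), which is exactly where $A\ge B\ge C$ and $r\le\floor{\frac{e-1}{2}}$ are used, and the check that the Schur-complement reduction remains integral --- immediate once $L$ is recognized as block-unitriangular. I do not expect to need Lemma~\ref{l:toep} or the inverse Littlewood--Richardson rule for this part; the argument is linear algebra over $\ZZ$ organized around the decomposition $R=\bigoplus_ix^iS$, and those tools presumably enter only for part~(iii).
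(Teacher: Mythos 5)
Your argument is correct, and it arrives at exactly the matrix $M_r(A,B,C)$ (up to a permutation of rows and columns and a global sign). The underlying idea --- pivot on the entries coming from multiplication by $x$, and what remains is the $M_r$ block --- is the same as the paper's, but you organize it quite differently and, in my view, more conceptually. The paper works directly with the lex-ordered monomial matrix: it shuffles the $x^{A-1}$-columns to the right and then runs a step-by-step elimination algorithm, proving an auxiliary lemma by induction (essentially Pascal's recurrence) to track how the entries in the surviving block evolve; only at the very end do the $\binom{A}{\cdot}$'s emerge. Your proposal instead uses the $\ZZ$-module splitting $R=\bigoplus_{i=0}^{A-1}x^iS$ with $S=\ZZ[y,z]/(y^B,z^C)$, which makes $U_r$ visibly block bidiagonal with identity subdiagonal, and a single Schur-complement step then collapses the whole elimination to the product $M_0M_1\cdots M_{A-1}$. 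Recognizing that product as multiplication by $(y+z)^A\colon S_{r-A+1}\to S_{r+1}$ gives the binomial coefficients immediately and replaces the paper's induction lemma with a one-line observation. The index bookkeeping you flag (that all $A$ summands $S_{r-i}$, $S_{r+1-i}$ are nonzero, and that the row/column ranges for $(y+z)^A$ are $B+C-r-2$ and $r-A+2$) does check out under $A\ge B\ge C\ge1$ and $A-1\le r\le\floor{\frac{e-1}{2}}$; note these hypotheses in fact force $A\le B+C-2$ and $r\le B+C-3$, which is what makes everything fit. One tiny cosmetic point: after reversing both monomial bases you also need $\binom{A}{m}=\binom{A}{A-m}$, as you say, to land exactly on the indexing in \eqref{eq:MR}; since a basis reversal is a permutation matrix this does not affect the SNF, so the conclusion about non-unit entries (and the nullity bound $r-A+2$ over any field) follows. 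You are also right that Lemma~\ref{l:toep} and the inverse Littlewood--Richardson lemma are not needed here; they only enter for part~(iii).
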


\begin{proof}
  We construct a matrix $U_r(y,z)$ as follows. First, represent $U_r$
  as a matrix in the monomial basis with a lexicographic ordering on
  its rows and columns, as in the previous proof.  For each $1$ in
  $U_r$, if the row index divided by the column index equals $y$,
  replace the $1$ with $y$, and similarly for $z$. Then move all the
  columns indexed by monomials with $x^{A-1}$ to the right,
  maintaining the ordering amongst these columns. There are $r-A+2$
  such columns: for $x^{A-1}y^jz^k$, with $j+k=r-A+1$, note that since
  $r \le \frac{A+B+C-3}{2}$ implies $r-A+1 \le \frac{B+C-A-1}{2} \le
  \frac{C-1}{2} < B-1,C-1$, $j$ can take on every value from 0 to
  $r-A+1$.  Now we claim that there are $B+C-r-2$ monomials of rank
  $r+1$ with no powers of $x$. First, note that $r \le
  \frac{A+B+C-3}{2}$ implies $B+C-r-2 \ge r-(A-1) \ge 0$. Now, the
  monomials of rank $r+1$ with no powers of $x$ are precisely
  $y^{C-1}z^{r+2-C}, y^{C-2}z^{r+3-C}, \ldots, y^{r+2-B}z^{B-1}$.
  
 We now have a matrix of the form
$$
U_r(x,y) = \left(
\begin{array}{ccc|ccc}
1 &    & 0 &   &    &    \\
    & \ddots &    &   & *  &  \\
 * &   & 1 & & & \\
 \hline
 & & & & & \\
 &*& & & 0 & \\
 & & & & &
\end{array}
\right)
$$
where the lower right block of zeroes has $B+C-r-2$ rows and $r-A+2$
columns.  Now we perform the following algorithm: initialize $M :=
U_r(y,z)$. While
$$
M = \left(
\begin{array}{c|ccc}
1 &  & Y & \\
\hline
 & & & \\
 X & & M' & \\
  & & &
\end{array}
\right)
$$
and $M$ has more than $r-A+2$ rows, set $M := M' - XY$. Each step of
the algorithm is using the $1$ in the first row and first column as a
pivot and performing $\ZZ$-invertible row and column operations on $M$
to eliminate the other entries in the same row and same column. Then
we focus on the remaining submatrix $M'$ and repeat.

\begin{lemma}
  At each step of the algorithm, the following holds: fix a monomial
  $\alpha$ of total degree $r-A+1$ with no powers of $x$, fix a
  nonnegative integer $\gamma$ and a monomial $\beta$ of total degree
  $r+1-\gamma$ with no powers of $x$, and let $\mu$ be the entry of
  $M$ whose column is indexed by $x^{A-1}\alpha$ and whose row is
  indexed by $x^\gamma\beta$, and suppose $\beta/\alpha = y^jz^k$. If
  no entry to the left of $\mu$ indexed by a monomial whose power of
  $x$ is less than $A-1$ is a $y$ or $z$, then $\mu =
  (-1)^{A-\gamma-1}{j+k \choose j} y^jz^k$.
\end{lemma}
\begin{proof}
  This statement is true at the beginning of the algorithm. We now
  proceed inductively.  Suppose $\mu$ is as in the statement of the
  Lemma. Suppose $yz$ divides $\beta$.  For some previous step of the
  algorithm, there existed an entry $\mu_y$ whose column was indexed
  by $x^{A-1}\alpha$ and whose row was indexed by
  $x^{\gamma+1}\beta/y$, and no entry to the left of $\mu_y$ indexed
  by something with $x$ power less than $A-1$ was a $y$ or $z$.  We
  have the following setup:
$$
\bordermatrix{%
 & x^\gamma \beta/y & \cdots & x^{A-1}\alpha & \cdots \cr
x^{\gamma+1}\beta/y & 1 & \cdots & \mu_y & \cdots \cr
\vdots & \vdots & \ddots & \vdots  \cr
x^\gamma \beta & y & \cdots
}.
$$
Similarly, for some previous step of the algorithm, there existed an
entry $\mu_z$ whose column was indexed by $x^{A-1}\alpha$ and whose
row was indexed by $x^{\gamma+1}\beta/z$, and no entry to the left of
$\mu_z$ indexed by something with $x$ power less than $A-1$ was a $y$
or $z$. We have an analogous setup:
$$
\bordermatrix{%
 & x^\gamma \beta/z & \cdots & x^{A-1}\alpha & \cdots \cr
x^{\gamma+1}\beta/z & 1 & \cdots & \mu_z & \cdots \cr
\vdots & \vdots & \ddots & \vdots  \cr
x^\gamma \beta & z & \cdots
}.
$$
By the algorithm, after both of these steps, we end up with $\mu =
-y\mu_y - z\mu_z$.  But by induction,
\begin{eqnarray*}
  -y\mu_y - z\mu_z &=& -y(-1)^{A-\gamma-2}{j+k-1 \choose j-1}
  y^{j-1}z^k
  - (-1)^{A-\gamma-2}{j+k-1 \choose j} y^jz^{k-1} \\
  &=& (-1)^{A-\gamma-1} \left[ {j+k-1 \choose j-1} + {j+k-1 \choose j}
  \right] y^jz^k \\
  &=& (-1)^{A-\gamma-1} {j+k \choose j} y^jz^k.
\end{eqnarray*}
In the case where $yz$ does not divide $\beta$, at least one of these
previous steps did not exist, but our argument still applies and the
result still follows.
\end{proof}

In particular, the Lemma implies that, at the end of the algorithm,
the matrix $M$ has $(B+C-r-2)$ rows and $(r-A+2)$ columns and takes
the form
$$
\left( (-1)^{A-1} {A \choose r-B+i-j+2} y^{A+B-r-i+j-2} z^{r-B+i-j+2}
\right)_{i,j}.
$$
Recall that at each step of the algorithm, we are simply performing a
sequence of $\ZZ$-invertible row and column operations. Therefore
$U_r(y,z)$ and $(-1)^{A-1}M$ have the same non-unit Smith normal form
entries. Substituting $y = z = 1$ gives the result.
\end{proof}

\begin{remark}
  Note that after substituting $A = a+b$, $B = a+c$, $C = b+c$, we
  return to the context of the plane partitions. In this case, our
  matrix $M_r(A,B,C)$ becomes an $(r-a-b+2) \times (a+b+2c-r-2)$
  matrix with entries
\begin{equation}\label{eq:Mr}
M_r(a+b,a+c,b+c) = \left( {a+b \choose r-a-c+i-j+2} \right)_{i,j}.
\end{equation}
Focusing on the middle rank $r = a+b+c-2$ yields a $c \times c$ matrix
$$
M_r(a+b,a+c,b+c) = \left( {a+b \choose b+i-j} \right)_{i,j}.
$$

The matrix $M_{a+b+c-2}(a+b,a+c,b+c)$ is called a \emph{Carlitz
  matrix} \cite{Kup98}, and occurs in \cite[Lemma~2.2]{LiZanello} and
\cite[Theorem~4.3]{CookNagel}.  The Smith normal form of these
matrices is not known, but Kuperberg conjectures that there is a
potential combinatorial connection between plane partitions and Smith
forms of Carlitz matrices \cite{Kup02}.  Even for small numbers,
however, it is subtle to understand (and even to compute!)  the Smith
forms.  We give some small examples:

When $c = 1$, the only Smith entry is $\binom{a+b}{b}$.  When $c = 2$
and $a = b$, the explicit row and column operations to turn
$M_{a+b+c-2}(a+b,a+c,b+c)$ into Smith normal form are
\[
\begin{pmatrix} 1 & -1 \\ -1 - 3a & 2 + 3a \end{pmatrix}
M_{a+b+c-2}(a+b,a+c,b+c)
\begin{pmatrix} 2 & 1 \\ 1 & 1 \end{pmatrix}
=
\begin{pmatrix} \frac{\binom{2a}{a}}{a+1} & 0 \\ 0 &
  \binom{2a+1}{a+1} \end{pmatrix}.
\]

Even though the first entry is the $a$-th Catalan number, a
combinatorial explanation of the Smith entries eludes the authors.

Directly calculating the Smith entries by computing GCDs of various
binomial coefficients seems intractable and does not seem to
generalize for bigger $c$.  In particular, the direct method seems
hard to understand even when $c = 2$ and $a$, $b$ are arbitrary.  In
that case, the Smith entries are $s_1 = \gcd\left(\binom{a+b}{b-1},
  \binom{a+b}{b}, \binom{a+b}{b+1}\right)$ and $s_2 =
\frac{\binom{a+b}{b}^2 - \binom{a+b}{b-1} \binom{a+b}{b+1}}{s_1}$.

\end{remark}

\begin{remark}
  If $R = \kk[x_1,\ldots,x_n]/(x_1^{A_1},\ldots,x_n^{A_n})$, then we
  can easily generalize the proof above to show that the non-unit
  Smith entries of the map $U_r$ (which is now defined by
  multiplication by $x_1 + \cdots + x_n$) for $A_1 - 1 \le r \le
  \frac{A_1 + \cdots + A_n - n}{2}$ are the same as those of the
  matrix with the following entries, assuming $A_1 \ge A_2 \ge \cdots
  \ge A_n$: if the column is indexed by $x_1^{A_1-1}\alpha$ and the
  row is indexed by $\beta$ where $x$ divides neither $\alpha$ nor
  $\beta$, and $\beta/\alpha = x_2^{i_2} \cdots x_n^{i_n}$, then the
  entry is the multinomial coefficient
$$
{ A_1 \choose i_2,i_3,\ldots,i_n } = \frac{A_1!}{i_2!i_3! \cdots i_n!}.
$$
Computer evidence suggests that the non-unit Smith entries of these
matrices behave nicely for $n = 4$ (as in $\textsc{SNF}(U_r)$ is a
submultiset of $\textsc{SNF}(U_{r+1}))$, but the analogous result is
unfortunately not true for $n = 5$: taking $A_1 = A_2 = A_3 = A_4 =
A_5 = 4$, the Smith entry $70$ occurs in $\textsc{SNF}(U_6)$ but not
in $\textsc{SNF}(U_7)$.
\end{remark}

Letting $s(r)$ denote the number of non-unit Smith normal form entries
of $U_r$ and $m = a+b+c-2$, Theorem~\ref{t:snf}(ii) implies that
$s(m-i) \le c-i$ for all $i \le c$. In fact, something stronger holds,
which is Theorem~\ref{t:snf}(iii).
\subsection*{Proof of Theorem~\ref{t:snf}, part (iii)}
Recall the statement of Theorem~\ref{t:snf}, part (iii):

\begin{fthm}
  Let $m := \floor{\frac{e-1}{2}}$, so that $U_m : R_m \to R_{m+1}$ is
  a map closest to the ``middle'' of~$R$. If $\textsc{SNF}(U_m) =
  (a_1,a_2,\ldots,a_{h(m)})$, then for $0 \le s \le m$ one has
  $$
  \textsc{SNF}(U_{m-s}) = (\underbrace{1,1,\ldots,1}_{s-h(m)+h(m-s)},
  a_1,a_2,\ldots,a_{h(m-s)}).
  $$
\end{fthm}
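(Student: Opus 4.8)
The plan is to relate $U_{m-s}$ to $U_m$ via the factorization $U_m \circ U_{m-1} \circ \cdots \circ U_{m-s} $ together with the transpose symmetry $U_{e-r}^t = U_r$, and then to exploit part (ii) to control how the Smith normal form grows as one moves toward the middle. The key structural fact I want to use is that for $A-1 \le r \le \floor{\frac{e-1}{2}}$ the non-unit Smith entries of $U_r$ coincide with those of the matrix $M_r(A,B,C)$ from \eqref{eq:MR}, and that $M_{r}(A,B,C)$ has exactly $r-A+2$ columns; as $r$ increases by $1$ toward the middle, the matrix $M_r$ gains one column (and loses one row), so its Smith normal form is, up to unit entries, obtained by \emph{bordering} the previous one. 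So the heart of the argument is a bordering / minors statement: if $N'$ is obtained from $N$ by adjoining one new column (and deleting one row), then $\textsc{SNF}(N)$ is, after padding with $1$'s, a ``suffix'' of $\textsc{SNF}(N')$ in the sense of the claimed formula.

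Concretely, first I would set up the reduction: by part (i) we may assume $m-s \ge A-1$, since for $m-s \le A-2$ every map $U_{m-s}$ has all-ones Smith normal form and $h(m-s) = h(m-s)$ makes the claimed multiset $(\underbrace{1,\dots,1}_{s-h(m)+h(m-s)})$ padded by the non-units of $U_m$, but then $h(m-s)-s$ is at most $0$ (here one checks $h(m) - h(m-s) \le s$, i.e. $h$ grows by at most $1$ per step, which follows from part (ii)'s bound $s(m-i)\le c-i$ or directly from unimodality estimates), so the formula reads ``all ones'' and is consistent. For $A-1 \le m-s \le m$, I would invoke part (ii) to replace each $U_{m-s}$ by $M_{m-s}(A,B,C)$ (matching non-unit entries), and then it suffices to prove the analogous nesting statement for the family of Toeplitz-type matrices $M_r(A,B,C)$, $r = A-1,\dots,m$. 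The entries ${A \choose r - B + i - j + 2}$ depend on $r$ and $i$ only through $r+i$, so $M_{r+1}$ is literally the matrix $M_r$ with its top row deleted and one extra column ${A \choose r - B + i - j +1}$ appended on the right — a clean ``Toeplitz window'' relation.

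The main step is then the following divisibility/minors lemma, which I expect to be the main obstacle: for an integer matrix $N$ (here $M_r$) with $p$ rows and $q$ columns, and $N^{+}$ the matrix obtained by deleting the first row of $N$ and appending one new column on the right (here $M_{r+1}$, with $p-1$ rows and $q+1$ columns), the Smith normal forms satisfy $a_i(N) \mid a_{i+1}(N^{+})$ and $a_i(N^{+}) \mid a_i(N)$ — equivalently, in terms of the gcd-of-$k\times k$-minors invariants $d_k$, one has $d_k(N^{+}) \mid d_k(N) \mid d_{k+1}(N^{+})$. Both inclusions are Cauchy–Binet–type statements about minors: every $k\times k$ minor of $N^{+}$ that avoids the new column is a $k \times k$ minor of a row-deleted submatrix of $N$, hence (by Laplace expansion along the deleted row) an integer combination of $k\times k$ and $(k+1)\times(k+1)$ minors of $N$; conversely a $k\times k$ minor of $N$ using the first row is, after the same manipulation, an integer combination of $k\times k$ minors of $N^{+}$. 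Iterating this one-step comparison from $r=A-1$ up to $r=m$ — at each stage the number of non-unit entries grows by at most one and the non-units of $M_r$ form a ``tail'' of the non-units of $M_{r+1}$ — gives that $\textsc{SNF}(U_{m-s})$, after stripping its $1$'s, equals the last $h(m-s)-(\text{number of }1\text{'s})$ entries of $\textsc{SNF}(U_m)$; counting ones via $h$ yields exactly $s - h(m) + h(m-s)$ of them and hence the displayed formula. The delicate points to watch are (a) checking that the count of $1$'s is exactly $s - h(m) + h(m-s)$ and not merely ``at least'', which requires the bordering lemma to give equality of the non-unit multisets, not just an inclusion of supports — this is why I need the two-sided divisibility $d_k(N^{+}) \mid d_k(N) \mid d_{k+1}(N^{+})$ rather than a one-sided bound; and (b) handling the boundary case $m-s = A-1$, where $M_{A-1}$ is a single column (a column of binomial coefficients), so its lone Smith entry is $\gcd\binom{A}{*}$, and confirming the formula degenerates correctly there.
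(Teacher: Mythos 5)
Your overall strategy---reduce via part (ii) to the matrices $M_r$, observe that $M_{r+1}$ is obtained from $M_r$ by deleting one row and adjoining one column of the same Toeplitz matrix, and then argue that the Smith normal forms nest---is exactly the paper's strategy; the decisive step in both is a nesting statement for Smith entries of the Toeplitz windows (the paper's Lemma~\ref{l:toep}). However, the ``bordering/minors lemma'' you propose to carry this step is false as a statement about arbitrary matrices, and your sketch of its proof doesn't go through.

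Concretely, take $N = \binom{1}{2}$ (a $2\times 1$ matrix) and let $N^+$ be ``delete the first row, append a column,'' say $N^+ = (2,\,2)$. Then $d_1(N)=1$ while $d_1(N^+)=2$, so $d_1(N^+) \nmid d_1(N)$; your claimed divisibility $a_i(N^+)\mid a_i(N)$ fails. The structural reason is that the two operations push in opposite directions: appending a column enlarges the set of $k\times k$ minors (so the gcd can only shrink), but \emph{deleting} a row shrinks that set (so the gcd can only grow), and composing them gives no a priori divisibility in either direction. Your Laplace-expansion argument goes the wrong way, too: Laplace expansion along a row shows that a $(k{+}1)\times(k{+}1)$ minor of $N$ is a $\ZZ$-combination of $k\times k$ minors of its row-deleted submatrices, not that a $k\times k$ minor of $N$ touching the deleted first row is a $\ZZ$-combination of $k\times k$ minors of $N^+$. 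That latter statement---which is exactly what you need for the hard inclusion of ideals of minors---is precisely where the Toeplitz structure is indispensable: the appended column \emph{repeats} the entries lost when the top row is dropped, and only because of that can a minor using the top row be recovered from minors of $N^+$. In the paper this is the genuinely nontrivial ``inverse Littlewood--Richardson'' step (Lemma~\ref{l:main}), proved by translating minors of the Toeplitz matrix into skew Schur polynomials via Jacobi--Trudi, then running a well-founded induction on the ``spread'' to write any $(k,k)$-legal (non-skew) diagram as a $\ZZ$-combination of $(k,c)$-legal (skew) ones. Your proposal as written elides all of this. (A smaller issue: the non-units of $M_r$ should be a \emph{prefix}, not a ``tail,'' of those of $M_{r+1}$, since the Smith entries of the smaller window are the first $r-A+2$ Smith entries of the larger one, i.e.\ the divisors; and note that even the two-sided divisibilities you posit would only sandwich $a_i(N^+)$ between $a_{i-1}(N)$ and $a_i(N)$, which is weaker than the equality the theorem requires.)

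So: right reduction, right target lemma, but the proof of the target lemma is the real mathematical content here, and it cannot be obtained by a generic Cauchy--Binet/Laplace manipulation. Some version of the Schur-polynomial argument (or an equivalent Toeplitz-specific minor identity) is needed to close the gap.
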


Immediately from part~(iii)~of~Theorem~\ref{t:snf}, we re-derive a
special case of~\cite[Prop~2.1(b)]{MMN}:

\begin{cor}
The maps $U_r$ for $r \le m$ are injective if and only if $U_m$ is
injective.
\end{cor}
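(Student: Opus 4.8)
The plan is to read the corollary directly off Theorem~\ref{t:snf}(iii). Fix a field $\kk$ and write $\textsc{SNF}(U_m) = (a_1,\dots,a_{h(m)})$. I would first record the elementary observation that, for any integer matrix $U$ with $\textsc{SNF}(U) = (b_1,\dots,b_n)$, its reduction over $\kk$ has rank equal to the number of $b_i$ that are nonzero in $\kk$: the unimodular row and column operations that put $U$ into Smith form over $\ZZ$ reduce to unimodular operations over $\kk$, so the reduced matrix is $\kk$-equivalent to the diagonal matrix with entries $\bar b_1,\dots,\bar b_n$. In particular, $U$ is injective over $\kk$ if and only if every $b_i$ is nonzero (equivalently, a unit) in $\kk$.

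Next I would apply part (iii): for $0 \le s \le m$ it asserts $\textsc{SNF}(U_{m-s}) = (\underbrace{1,\dots,1}_{s-h(m)+h(m-s)}, a_1,\dots,a_{h(m)-s})$, so every entry of $\textsc{SNF}(U_{m-s})$ is either $1$ or one of $a_1,\dots,a_{h(m)}$. Hence if $U_m$ is injective over $\kk$, then all of $a_1,\dots,a_{h(m)}$ are nonzero in $\kk$, so every entry of every $\textsc{SNF}(U_{m-s})$ is nonzero in $\kk$, and therefore $U_{m-s}$ is injective for all $0 \le s \le m$; equivalently, $U_r$ is injective for all $r \le m$. The reverse implication is trivial: specialising to $s=0$ recovers $U_m$ among the maps $U_r$ with $r \le m$.

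The only point requiring care is the characteristic-dependent meaning of ``injective'' and its translation into nonvanishing of Smith normal form entries in $\kk$; past that there is essentially no obstacle, since Theorem~\ref{t:snf}(iii) already does all the work. I would emphasise that, in contrast to the remark preceding part (iii), this corollary uses nothing about the divisibilities $a_i \mid a_{i+1}$ — only that the tuple $\textsc{SNF}(U_{m-s})$ is obtained from $\textsc{SNF}(U_m)$ by deleting a terminal segment and prepending $1$'s, and hence introduces no new invariant factors.
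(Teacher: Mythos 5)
Your proposal is correct and matches the paper's route: the paper derives this corollary immediately from Theorem~\ref{t:snf}(iii), and your argument is just the standard fleshing-out of that step (unimodular $P,Q$ over $\ZZ$ stay invertible over $\kk$, so injectivity over $\kk$ is equivalent to all Smith entries being nonzero in $\kk$, and $\textsc{SNF}(U_{m-s})$ consists of $1$'s together with a subset of the entries of $\textsc{SNF}(U_m)$). No gaps; nothing further is needed.
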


Recall the matrices $M_r(A,B,C)$ given by~\eqref{eq:MR}. The first
one, $M_{A-1}(A,B,C)$, is a matrix with $1$ column and $-A+B+C-1$ rows,
and in general, the $i$-th matrix has $i$ columns and $-A+B+C-i$ rows. We
observe that in fact these are all submatrices of the $(-A+B+C-1) \times
(-A+B+C-1)$ lower triangular Toeplitz matrix with entries
$$
\left( {A \choose A-B+i-j+1} \right)_{i,j}
$$
for $i \ge j$, and $0$ otherwise, which, written out, looks like
$$
\begin{pmatrix}
{A \choose A-B+1} & 0 & \cdots & 0 & 0 \\
{A \choose A-B+2} & {A \choose A-B+1} & \cdots & 0 & 0 \\
\vdots & \vdots & \ddots & \vdots & \vdots \\
{A \choose C-1} & {A \choose C-2} & \cdots & {A \choose
  A-B+2}
& {A \choose A-B+1}
\end{pmatrix}.
$$
For $1\le i \le \frac{-A+B+C}{2}$, the matrix $M_{A-2+i}(A,B,C)$ is
simply the submatrix of the Toeplitz matrix created by choosing the
first $i$ columns and the last $-A+B+C-i$ rows.  Surprisingly, there
is nothing special about the entries of the large Toeplitz matrix!  We
have the following more general result, which immediately implies
part~(iii)~of~Theorem~\ref{t:snf}:

\begin{lemma}\label{l:toep}
Let $A$ be an arbitrary $n \times n$ Toeplitz matrix,
$$
A = \begin{pmatrix}
h_n & & & & & & \\
h_{n-1} & h_n & & & & & \\
h_{n-2} & h_{n-1} & h_n & & & & \\
\vdots & \vdots & \vdots & \ddots & & & \\
h_3 & h_4 & h_5 & \cdots & h_n & & \\
h_2 & h_3 & h_4 & \cdots & h_{n-1} & h_n & \\
h_1 & h_2 & h_3 & \cdots & h_{n-2} & h_{n-1} & h_n
\end{pmatrix}
$$
with entries in a principal ideal domain, and for $1 \le c \le n$, let
$A_c$ denote the $(n-c+1) \times c$ submatrix of $A$ formed by columns
$1,\ldots,c$ and rows $c,\ldots,n$. Then, for $1 \le k \le \frac n2$,
the $k$-th Smith normal form entry of $A_c$ is the same for all $k \le
c \le \frac n2$.
\end{lemma}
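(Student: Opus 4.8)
The plan is to replace the statement about individual Smith normal form entries by a statement about determinantal divisors, and then to evaluate those divisors using symmetric function theory. Write $d_k(M)$ for the $k$-th determinantal divisor of a matrix $M$ over the PID, i.e.\ the gcd of all $k\times k$ minors of $M$, so that the $k$-th Smith normal form entry is $a_k(M)=d_k(M)/d_{k-1}(M)$. Since $A_c$ is $(n-c+1)\times c$ and $k\le c\le n/2<n+1-k$, the quantity $d_k(A_c)$ is defined for every $c$ in the stated range. I will prove the sharper claim: for each fixed $k$, the $\ZZ$-span of the $k\times k$ minors of $A_c$, computed \emph{universally} — with $h_1,\dots,h_n$ treated as the algebraically independent generators of the ring of symmetric functions $\Lambda_\ZZ=\ZZ[h_1,h_2,\dots]$ and $h_i\mapsto 0$ for $i>n$ — is independent of $c$ for $k\le c\le n/2$. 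Specializing the $h_i$ to their actual PID values then forces $d_k(A_c)$ to be independent of $c$ on that range, and since $d_{k-1}(A_c)$ is independent of $c$ on the larger range $k-1\le c\le n/2$ (the $(k-1)$-st instance of the same claim, with $d_0=1$ as base case), the ratio $a_k(A_c)$ is independent of $c$ for $k\le c\le n/2$.

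To analyze the minors I would use the classical fact (Jacobi--Trudi, equivalently Lindström--Gessel--Viennot) that a minor of a lower-triangular Toeplitz matrix equals, up to sign, a skew Schur polynomial in the symmetric functions whose complete homogeneous symmetric functions are the matrix entries. Embedding $A$ into the large Toeplitz array $(h_{j-i})_{i,j}$ and exploiting persymmetry — the matrix is symmetric about its antidiagonal, so $A_c$ and $A_{n+1-c}$ are antitransposes of one another and thus have the same determinantal divisors — one finds, after a single uniform translation of shapes, that the $k\times k$ minors of $A_c$ are precisely the skew Schur polynomials $s_{\lambda/\mu}$ with $\mu\subseteq (n-c+1-k)^k$ and $\lambda=(n-c+1)^k+\hat\lambda$ for $\hat\lambda\subseteq (c-k)^k$. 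The two governing boxes $(c-k)^k$ and $(n-c+1-k)^k$ have widths summing to the constant $n+1-2k$; this is exactly where the hypothesis $c\le n/2$ enters, since it guarantees the $\mu$-box is at least as wide as the $\hat\lambda$-box, so that nothing is lost as $c$ decreases.

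The forward inclusion is the Littlewood--Richardson rule: expanding $s_{\lambda/\mu}=\sum_\nu c^{\lambda}_{\mu\nu}s_\nu$ shows the $\ZZ$-span of the $k\times k$ minors of $A_c$ lies in the span of $\{s_\nu:\nu\in\Pi_{k,c}\}$, where $\Pi_{k,c}$ is the set of $\nu$ appearing in some such expansion. A bookkeeping step — reading off the extreme cases using the rectangle-complementation identity $s_{(q^k)/\mu}=s_{\mu^\vee}$ (complement of $\mu$ in the $k\times q$ box) together with the standard nonvanishing conditions for $c^{\lambda}_{\mu\nu}$ — should identify $\Pi_{k,c}=\{\,(k^k)+\sigma:\sigma\subseteq (n+1-2k)^k\,\}$, which is \emph{independent of $c$} for $k\le c\le n/2$. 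For the reverse inclusion I would invoke the ``inverse Littlewood--Richardson rule'' of Lemma~\ref{l:main} to express each straight Schur polynomial $s_\nu$ with $\nu\in\Pi_{k,c}$ as a $\ZZ$-linear combination of the skew Schur polynomials occurring as $k\times k$ minors of $A_c$. The two inclusions together give $\mathrm{span}_\ZZ\{\,k\times k\text{ minors of }A_c\,\}=\mathrm{span}_\ZZ\{s_\nu:\nu\in\Pi_{k,c}\}$ for all $c$ in range, hence independence of $c$, and the reduction in the first paragraph finishes the proof.

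The main obstacle is the reverse inclusion: proving the inverse Littlewood--Richardson rule, and crucially proving it with \emph{integer} coefficients and with the skew shapes confined to the narrow windows dictated by $A_c$ (the forward Littlewood--Richardson direction, by contrast, is used as a black box). A secondary point requiring care is the precise computation of $\Pi_{k,c}$ and the verification that it stabilizes exactly on $c\le n/2$; I expect this to follow from the explicit minimal/maximal shape analysis above, but it must be checked rather than assumed.
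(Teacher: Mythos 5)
This is correct and essentially the paper's own argument: reduce to determinantal divisors, identify the $k\times k$ minors of $A_c$ with skew Schur polynomials via Jacobi--Trudi, obtain one inclusion of spans from the Littlewood--Richardson rule together with the bounds $\nu_1 \le n-k+1$ and $\nu_k \ge k$, and obtain the reverse inclusion from the inverse Littlewood--Richardson rule --- which is exactly how the paper's proof of Lemma~\ref{l:toep} invokes Lemma~\ref{l:main}. The remaining differences are cosmetic: your antitransposed/translated labeling of the shapes describes the same set of minors that Proposition~\ref{p:legal} records in a different normalization, and the exact determination of $\Pi_{k,c}$ that you flag as needing verification is not actually required, since the inclusion of $\Pi_{k,c}$ into $\{\nu : \nu \supseteq (k^k),\ \nu_1 \le n-k+1\}$ (your nonvanishing conditions) plus Lemma~\ref{l:main} already closes the argument.
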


Our proof of Lemma~\ref{l:toep} requires a bit of algebraic machinery,
so we defer it to its own section.
\section{Proof of Lemma~\ref{l:toep}}
Suppose $M$ is a matrix over a PID with Smith normal form entries $a_1
\le a_2 \le \cdots \le a_r$. Then it is known (see~\cite{Newman}) that
$$
a_k = \frac{\gcd(\text{$k \times k$ minors of $M$})}{\gcd(\text{$(k-1)
    \times (k-1)$ minors of $M$})}.
$$
Therefore, to prove Lemma~\ref{l:toep}, it suffices to show that,
for $1 \le k \le c \le \frac n2$, the ideal generated by the $k \times
k$ minors of $A_c$ is equal to the ideal generated by the $k \times k$
minors of $A_k$.

\begin{example}
Suppose $n=7$. The matrix $A$ is
$$
A = \begin{pmatrix}
h_7 \\
h_6 & h_7 \\
h_5 & h_6 & h_7 \\
h_4 & h_5 & h_6 & h_7 \\
h_3 & h_4 & h_5  & h_6 & h_7 \\
h_2 & h_3 & h_4 & h_5 & h_6 & h_7 \\
h_1 & h_2 & h_3 & h_4 & h_5 & h_6 & h_7
\end{pmatrix}
$$
and
$$
A_1 = \begin{pmatrix}
h_7 \\
h_6 \\
h_5 \\
h_4 \\
h_3 \\
h_2 \\
h_1
\end{pmatrix}, \
A_2 = \begin{pmatrix}
h_6 & h_7 \\
h_5 & h_6 \\
h_4 & h_5 \\
h_3 & h_4 \\
h_2 & h_3 \\
h_1 & h_2
\end{pmatrix}, \
A_3 = \begin{pmatrix}
h_5 & h_6 & h_7 \\
h_4 & h_5 & h_6 \\
h_3 & h_4 & h_5 \\
h_2 & h_3 & h_4 \\
h_1 & h_2 & h_3
\end{pmatrix}, \
A_4 = \begin{pmatrix}
h_4 & h_5 & h_6 & h_7 \\
h_3 & h_4 & h_5 & h_6 \\
h_2 & h_3 & h_4 & h_5 \\
h_1 & h_2 & h_3 & h_4
\end{pmatrix}.
$$
The case $k = 1$ is trivial because the $1 \times 1$ minors of the
matrices are just the entries themselves. The $k=2$ case is slightly
more complicated. For example, the minor
$$
\begin{vmatrix}
h_6 & h_7 \\
h_1 & h_2
\end{vmatrix}
$$
can be found in $A_2$ but not $A_3$; however, we can write
$$
\begin{vmatrix}
h_6 & h_7 \\
h_1 & h_2
\end{vmatrix} = 
\begin{vmatrix}
h_5 & h_7 \\
h_1 & h_3
\end{vmatrix} -
\begin{vmatrix}
h_5 & h_6 \\
h_2 & h_3
\end{vmatrix},
$$
where the minors on the right hand side can be found in $A_3$. One can
see that trying to do this systematically for $k \times k$ minors with
$k \ge 3$ gets rather difficult.
\end{example}

Since minors are rather difficult to work with directly, we instead
use Schur polynomials.  The \emph{complete homogeneous symmetric
  polynomials} in $n$ variables $x_1,\ldots,x_n$ are the polynomials
\begin{eqnarray*}
h_0(x_1,\ldots,x_n) &=& 1 \\
h_1(x_1,\ldots,x_n) &=& \sum_{1 \le i \le n} x_i \\
h_2(x_1,\ldots,x_n) &=& \sum_{1 \le i \le j \le n} x_ix_j \\
h_3(x_1,\ldots,x_n) &=& \sum_{1 \le i \le j \le k \le n} x_ix_jx_k \\
 &\vdots &
\end{eqnarray*}
where $h_d(x_1,\ldots,x_n)$ is the sum of all monomials of total
degree $d$.  What these polynomials actually are is not so important
to our proof; the importance lies in the fact that $h_1,\ldots,h_n$
are algebraically independent. An \emph{integer partition} $\lambda =
(\lambda_1,\lambda_2,\ldots)$ of $n$ is a non-increasing sequence of
nonnegative integers $\lambda_i$ such that $\sum_i \lambda_i = n$.  If
$k$ is the largest number such that $\lambda_k > 0$, then we say
$\lambda$ \emph{has $k$ parts}.  If $\lambda_i \le \ell$, we say that
the $i$-th part of $\lambda$ is at most $\ell$.  If $\lambda$ has $k$
parts and $\lambda_1 \le \ell$, then we say that $\lambda$ \emph{fits
  in a $k \times \ell$ box}. We associate to each integer partition
$\lambda$ a Young diagram (see Figure~\ref{fig:partition}).
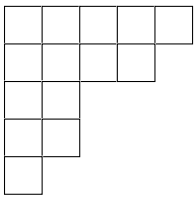
\begin{figure}[htbp]
\begin{tikzpicture}
\young{{5,4,2,2,1}}{2}{0}
\end{tikzpicture}
\caption{Young diagram representing $(5,4,2,2,1)$}
\label{fig:partition}
\end{figure}

If $\lambda$ and $\mu$ are two partitions and $\mu_i \le \lambda_i$
for all $i$, then we say $\mu$ is \emph{contained in} $\lambda$. If
so, then we associate a skew diagram to $\lambda$ \emph{mod} $\mu$,
written $\lambda / \mu$ (see Figure~\ref{fig:mod}).
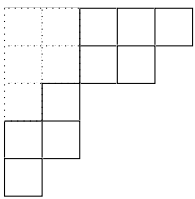
\begin{figure}[htbp]
\begin{tikzpicture}
\skewyoung{{5,4,2,2,1}}{{2,2,1}}{1};
\draw[very thick, white,scale=0.5] (0,-0) -- (0,-3);
\draw[dotted,scale=0.5] (0,-0) -- (0,-3);
\draw[scale=0.5] (0,-3) -- (1,-3) -- (1,-2) -- (2,-2) -- (2,0);
\end{tikzpicture}
\caption{Skew diagram representing $(5,4,2,2,1) / (2,2,1,0,0)$}
\label{fig:mod}
\end{figure}

To any such $\lambda / \mu$, we associate the \emph{Schur polynomial}
\begin{equation}\label{eq:schur}
  S_{\lambda / \mu} = \det\left( h_{\lambda_i - \mu_j - i + j} \right).
\end{equation}
where we take $\mu_k = 0$ if $\lambda$ has $k$ parts.  Schur
polynomials correspond to minors found in Toeplitz matrices, which is
precisely what we need.  For a more comprehensive treatment of Schur
polynomials and Young diagram, we refer the reader to~\cite{Fulton}. A
Schur polynomial $S_{\lambda / \mu}$ is \emph{non-skew} if $\mu = 0$,
and \emph{skew} otherwise.

Our primary weapon of attack will be the \emph{Littlewood-Richardson
  rule}, which tells us how to decompose a skew Schur polynomial into
a linear combination of non-skew Schur polynomials (with positive
coefficients, even!).  Suppose $S_{\lambda/\mu}$ is a skew Schur
polynomial. Draw the diagram associated with $\lambda / \mu$. A
\emph{labeling} of $\lambda/\mu$, where $\lambda$ has $k$ parts, is
defined as a labeling of the squares of the diagram with integers
$1,\ldots,k$ such that:
\begin{itemize}
  \item%
  the numbers are weakly increasing from left to right in each row;
  \item%
  the numbers are strictly increasing from top to bottom in each
  column;
\item%
  when reading the string of the numbers from right to left in each
  row, top row to bottom row, each initial substring must have at
  least as many $i$'s as $i+1$'s, for all $i$.
\end{itemize}
The first two properties say that the labeling forms a skew
semistandard Young tableau.  A partition $\pi$ \emph{arises from a
  labeling of} $\lambda$ if, for each $i$, the $i$-th part of $\pi$
has size equal to the number of $i$'s in the labeling of
$\lambda$. The Littlewood-Richardson rule states that
$$
S_{\lambda / \mu} = \sum_{\pi} S_{\pi}
$$
where the sum is over all partitions $\pi$ that arise from a labeling
of $\lambda / \mu$ (see Figure~\ref{fig:LR} for an illustration of
this rule).
\begin{figure}[htbp]
\begin{tikzpicture}
\matrix[column sep=6,row sep=3] {
\skewyoung{{5,3}}{{1}}{0}&
\draw (1.25,-0.5) node {$=$};&
\skewyoung{{5,3}}{{1}}{0}
\foreach \a in {1,...,4}
  \draw (\a/2 + 0.25, -0.25) node {$1$};
\draw (0.25, -0.75) node {$1$};
\foreach \a in {1,...,2}
  \draw (\a/2 + 0.25, -0.75) node {$2$};
&
\draw (1.25,-0.5) node {$+$};&
\skewyoung{{5,3}}{{1}}{0}
\foreach \a in {1,...,4}
  \draw (\a/2 + 0.25, -0.25) node {$1$};
\foreach \a in {0,...,2}
  \draw (\a/2 + 0.25, -0.75) node {$2$};
\\
&
\draw (1.25,-0.5) node {$=$};&
\young{{5,2}}{2}{0}&
\draw (1.25,-0.5) node {$+$};&
\young{{4,3}}{2}{0}\\
};
\end{tikzpicture}
\caption{The Littlewood-Richardson rule showing
$S_{(5,3)/(1,0)} = S_{(5,2)} + S_{(4,3)}$.}
\label{fig:LR}
\end{figure}
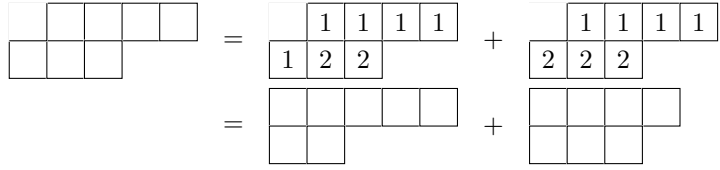

Since the Schur polynomials $h_1,\ldots,h_n$ in $n$ variables are
algebraically independent, we may treat the Littlewood-Richardson rule
as an algebraic identity for $S_{\lambda/\mu}$ of the form
\eqref{eq:schur}, where the $h_i$ are formal variables.

Recall our definition of the matrix $A$ and its submatrices $A_c$ in
the statement of the Lemma. We now introduce a new term:

\begin{defn}
  A Schur polynomial $S_{\lambda/\mu}$ is \emph{$(k,c)$-legal} if it
  is equal to a $k \times k$ minor in $A_c$. We also say that
  $\lambda/\mu$ is $(k,c)$-legal, if $S_{\lambda/\mu}$ is
  $(k,c)$-legal, identifying the polynomial with the diagram.
\end{defn}

By this definition, our task amounts to proving that the ideal
generated by $(k,k)$-legal diagrams is equal to the ideal generated by
$(k,c)$-legal diagrams, for all $1 \le k \le \frac n2$ and for all $k
\le c \le \frac n2$. To make our job easier, we have the following
characterization:

\begin{prop}\label{p:legal}
  A skew diagram $\lambda/\mu$ with $k$ parts is $(k,c)$-legal if and
  only if the following hold:
\begin{itemize}
  \item%
    $\lambda_1 \le n-k+1$;
  \item%
    $\lambda_k \ge k$;
  \item%
    $\mu_1 \le c-k$;
  \item%
    $\lambda_i - \mu_i \ge k$ for all $i$;
  \item%
    $\lambda_1 - \lambda_k \le n-c-k+1$.
\end{itemize}
\end{prop}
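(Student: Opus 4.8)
The plan is to pass everything through the Jacobi--Trudi formula~\eqref{eq:schur}: ``$(k,c)$-legal'' will become the existence of a single integer ``slide'' parameter for which an explicit recipe places certain row and column indices in the allowed ranges, and the five displayed inequalities are then exactly the condition that such a slide exists.

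I would first set up the dictionary between minors of $A_c$ and skew Schur polynomials. A $k\times k$ minor of $A_c$ is indexed by rows $c\le p_1<\dots<p_k\le n$ and columns $1\le q_1<\dots<q_k\le c$ of $A$; since every entry of $A_c$ lies weakly below the diagonal of $A$, where the $(p,q)$-entry is $h_{n-p+q}$, the minor equals $\det\!\bigl(h_{n-p_i+q_j}\bigr)_{1\le i,j\le k}$. On the other side, $S_{\lambda/\mu}=\det\!\bigl(h_{\lambda_i-\mu_j-i+j}\bigr)_{1\le i,j\le k}$ with $\mu_k=0$. Because $h_1,\dots,h_n$ are algebraically independent, two determinants of this shape are equal precisely when their index arrays agree after adding a single constant to all row indices and subtracting it from all column indices --- the usual freedom to slide a skew diagram horizontally. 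Thus $S_{\lambda/\mu}$ equals the minor coming from $(p_i),(q_j)$ if and only if there is an integer $s$ with
\[
p_i = n+i-\lambda_i+s,\qquad q_j = s+j-\mu_j\qquad(1\le i,j\le k),
\]
and strict monotonicity of the $p_i$ and $q_j$ is automatic from $\lambda,\mu$ being partitions. Hence $\lambda/\mu$ is $(k,c)$-legal iff some integer $s$ satisfies all of $p_1\ge c$, $p_k\le n$, $q_1\ge 1$, $q_k\le c$.

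These four requirements are $s\ge\lambda_1-n+c-1$, $s\le\lambda_k-k$, $s\ge\mu_1$, and $s\le c-k$. An admissible $s$ exists iff $\max(\lambda_1-n+c-1,\ \mu_1)\le\min(\lambda_k-k,\ c-k)$, and expanding ``$\max\le\min$'' into its four instances recovers exactly the inequalities in the statement (of which $\lambda_k\ge k$ is redundant, following from $\mu_1\le\lambda_k-k$ and $\mu_1\ge 0$). One implication is then immediate: if the inequalities hold, pick any $s$ in the nonempty interval, read off $(p_i),(q_j)$, and obtain a genuine minor of $A_c$ equal to $S_{\lambda/\mu}$.

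The reverse implication --- legality forces the inequalities --- is where I expect the real difficulty. Legality only says $S_{\lambda/\mu}$ equals \emph{some} minor $\det(h_{n-p_i+q_j})$, and a skew Schur polynomial need not remember its diagram ($180^\circ$ rotations and ribbon rearrangements, as well as translations, all preserve it). I would argue that under the standing hypotheses --- $\lambda$ has exactly $k$ parts, $\mu_k=0$, and every row $\lambda_i-\mu_i$ has at least $k$ cells --- the diagram \emph{is} recovered from $S_{\lambda/\mu}$: for such ``thick'' shapes the Jacobi--Trudi matrix is nondegenerate and cannot collapse onto a genuinely smaller shape. Granting this rigidity, I would take any minor realizing $S_{\lambda/\mu}$, normalize its own Jacobi--Trudi data, read off its diagram, identify it with $\lambda/\mu$, and thereby recover $p_i=n+i-\lambda_i+s$ and $q_j=s+j-\mu_j$ for the correct $\lambda,\mu$; membership of the $p_i$ in $\{c,\dots,n\}$ and the $q_j$ in $\{1,\dots,c\}$ then produces the inequalities. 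I would isolate the rigidity claim as a separate lemma on Schur polynomials of skew shapes whose rows are all long --- that step is the crux; the rest is the bookkeeping above.
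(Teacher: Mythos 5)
Your slide--parameter setup is the paper's own approach: read a $k\times k$ minor of $A_c$ as $\det(h_{n-p_i+q_j})$, match it against $\det(h_{\lambda_i-\mu_j-i+j})$ via $p_i=n+i-\lambda_i+s$, $q_j=j-\mu_j+s$, and require $p_1\ge c$, $p_k\le n$, $q_1\ge 1$, $q_k\le c$. The paper does this with the minor written in the corner form of Eq.~\eqref{eq:minor} and reads off the same range restrictions, so the core of the argument is identical.

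The flaw is in the sentence claiming that expanding $\max\le\min$ ``recovers exactly the inequalities in the statement.'' Your third comparison is $\mu_1\le\lambda_k-k$, i.e.\ $\lambda_k-\mu_1\ge k$, while the Proposition's fourth bullet is the weaker ``$\lambda_i-\mu_i\ge k$ for all $i$.'' Your condition implies the Proposition's (since $\lambda_i\ge\lambda_k$ and $\mu_i\le\mu_1$), but not conversely. Your derivation is in fact the correct one: the Jacobi--Trudi matrix's bottom-left entry is $h_{\lambda_k-\mu_1-k+1}$, and this index must be $\ge 1$ for the matrix to sit inside $A_c$. The Proposition as printed is therefore off in its ``if'' direction. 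For example, take $n=8$, $c=4$, $k=2$, $\lambda/\mu=(5,3)/(2,0)$: all five listed inequalities hold, yet the Jacobi--Trudi matrix is $\bigl(\begin{smallmatrix} h_3 & h_6 \\ h_0 & h_3\end{smallmatrix}\bigr)$, which cannot occur inside $A_4$ (and $S_{(5,3)/(2,0)}=h_3^2-h_6$ is not any $2\times 2$ minor of $A_4$ either). You should have flagged this mismatch rather than asserting exact agreement --- the fourth bullet should read $\lambda_k-\mu_1\ge k$, and your own computation proves it. (This slip in the paper is harmless downstream: the one place the sufficiency direction is invoked, the proof of Lemma~\ref{l:main}, one checks that the constructed $\lambda/\mu$ does satisfy $\lambda_k-\mu_1\ge k$.)

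On the ``rigidity'' lemma you single out as the crux of the reverse implication: it isn't needed, because the paper's proof works at the level of matrices, not determinants. ``$\lambda/\mu$ is $(k,c)$-legal'' is used there to mean the Jacobi--Trudi matrix of $\lambda/\mu$ appears entrywise as a $k\times k$ submatrix of $A_c$; under that reading, comparing entries and using algebraic independence of the $h_m$ gives $\lambda_i-\mu_j-i+j=n-p_i+q_j$ for all $i,j$, which is additively separable, so the slide $s$ exists for free. Your worry would only bite under the looser reading ``$S_{\lambda/\mu}$ equals the determinant of \emph{some} submatrix,'' which the paper does not take. It's reasonable to note the ambiguity in the printed definition, but an isolated rigidity lemma about thick skew shapes is unnecessary for the Proposition as actually used.
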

\begin{proof}
  Suppose $\lambda/\mu$ is $(k,c)$-legal. Then it corresponds to some
  $k \times k$ minor
\begin{equation}\label{eq:minor}
\begin{vmatrix}
h_{i+t} & \cdots & h_{i+t+r} \\
\vdots & \ddots & \vdots \\
h_i & \cdots & h_{i+r}
\end{vmatrix}
\end{equation}
of $A_c$, where the rows and columns represented by dots are not
necessarily adjacent in~$A$. Recall that
$$
A_c = \begin{pmatrix}
h_{n-c+1} & \cdots & h_n \\
\vdots & \ddots & \vdots \\
h_1 & \cdots & h_c
\end{pmatrix}.
$$
This implies that
\begin{eqnarray*}
1 \le &i& \le n-k-c+2, \\
k-1 \le &r& \le c-1, \\
k-1 \le &t& \le n-c, \\
k \le &i+t& \le n-c+1, \\
k \le &i+r& \le c, \\
2k-1 \le &i+t+r& \le n.
\end{eqnarray*}
From Eq.~\eqref{eq:schur}, we find that
\begin{eqnarray*}
\lambda &=& (i+t+r-k+1,\ldots,i+r) \\
\mu &=& (r-k+1, \ldots, 0).
\end{eqnarray*}
Therefore
\begin{eqnarray*}
\lambda_1 &=& i+t+r-k+1 \le n-k+1, \\
\lambda_k &=& i+r \ge k, \\
\mu_1 &=& r-k+1\le c-k, \\
\lambda_1 - \lambda_k &=& t-k+1 \le n-c-k+1.
\end{eqnarray*}
The remaining inequality, $\lambda_i - \mu_i \ge k$, follows from the
fact that the diagonal terms of the submatrix must be at least $k$,
and are equal to the $\lambda_i - \mu_i$.

Now suppose $\lambda/\mu$ satisfies the above inequalities. It is
straightforward to verify that the $k \times k$ minor with entries
$$
\left|
\begin{array}{ccccc}
h_{\lambda_1 - \mu_1} & * & \cdots & * & h_{\lambda_1+k-1} \\
* & h_{\lambda_2 - \mu_2} & \cdots & * & h_{\lambda_2+k-2} \\
 & & \ddots & & \vdots \\
 * & * & \cdots & h_{\lambda_{k-1} - \mu_{k-1}} & h_{\lambda_{k-1}+1}
 \\
 * & * & \cdots & * & h_{\lambda_k}
\end{array}
\right|
$$
(where the $*$ denote entries that are determined by the choice of
rows and columns) corresponds to $\lambda/\mu$ and can be found in
$A_c$.
\end{proof}

\begin{cor}\label{c:nonskew}
  The $(k,k)$-legal diagrams are precisely non-skew $\lambda$ such
  that $\lambda_1 \le n-k+1$ and $\lambda_k \ge k$.
\end{cor}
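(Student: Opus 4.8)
\textbf{Proof proposal for Corollary~\ref{c:nonskew}.} The plan is simply to specialize Proposition~\ref{p:legal} to the case $c=k$ and check that three of the five listed conditions become vacuous. First I would observe that the condition $\mu_1 \le c-k = 0$ forces every part of $\mu$ to vanish, so that $S_{\lambda/\mu}$ is non-skew; conversely, a non-skew diagram has $\mu_1 = 0 \le c-k$. Thus, among the five conditions of Proposition~\ref{p:legal} with $c=k$, the third is equivalent to $\lambda$ being non-skew, and the remaining four reduce to $\lambda_1 \le n-k+1$, $\lambda_k \ge k$, $\lambda_i \ge k$ for all $i$, and $\lambda_1 - \lambda_k \le n-2k+1$.

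Next I would argue that the last two of these are implied by the first two. Since $\lambda$ has $k$ parts, it is weakly decreasing, so $\lambda_i \ge \lambda_k \ge k$ for every $i$, which gives the condition $\lambda_i - \mu_i = \lambda_i \ge k$. Likewise, combining $\lambda_1 \le n-k+1$ with $\lambda_k \ge k$ yields $\lambda_1 - \lambda_k \le (n-k+1)-k = n-2k+1 = n-c-k+1$. Hence for $c=k$ the conditions of Proposition~\ref{p:legal} are equivalent to the single pair of requirements: $\lambda$ non-skew, $\lambda_1 \le n-k+1$, and $\lambda_k \ge k$. This proves both directions of the corollary at once.

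I do not expect any real obstacle here, as this is a direct consequence of the already-proven Proposition~\ref{p:legal}; the only point requiring a moment's care is verifying that the inequalities $\lambda_i - \mu_i \ge k$ and $\lambda_1 - \lambda_k \le n-c-k+1$ are genuinely redundant once $\mu = 0$ and the other two bounds hold, which as noted follows immediately from $\lambda$ being a partition with exactly $k$ parts.
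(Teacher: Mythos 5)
Your proposal is correct and matches the paper's intent exactly: the corollary is stated there without a separate proof precisely because it is the specialization $c=k$ of Proposition~\ref{p:legal}, with $\mu_1 \le 0$ forcing $\mu = 0$ and the remaining two inequalities made redundant just as you observe. Nothing further is needed.
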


Before we begin proving Lemma~\ref{l:toep}, we make the following
handy definition:

\begin{defn}
If $\lambda$ has $k$ parts, then the \emph{spread} of
$\lambda$ is the integer partition
$$
(\lambda_1 - \lambda_k, \lambda_2 - \lambda_k, \ldots, \lambda_{k-1} -
\lambda_k).
$$
We put a lexicographical well-ordering on spreads. That is, if $\delta
= (\delta_1,\ldots,\delta_{k-1})$ and $\epsilon =
(\epsilon_1,\ldots,\epsilon_{k-1})$ are spreads, then $\delta <
\epsilon$ if and only if the leftmost nonzero entry of $\epsilon -
\delta$ is positive.
\end{defn}

With this definition in mind, we prove our main lemma, which can be
seen as an ``inverse'' Littlewood-Richardson rule:
\begin{lemma}\label{l:main}
  Any $(k,k)$-legal diagram is a linear combination of $(k,c)$-legal
  diagrams.
\begin{proof}
  Suppose $\nu$ is a $(k,k)$-legal diagram. If $\nu_1 - \nu_k \le
  n-c-k+1$, then by Proposition~\ref{p:legal}, $\nu$ is
  $(k,c)$-legal. If $\nu_1 - \nu_k > n-c-k+1$, then its spread exceeds
  $(n-c-k+1,n-c-k+1,\ldots,n-c-k+1)$. We construct the following skew
  diagram $\lambda/\mu$ (see Figure~\ref{fig:alg}):
\begin{figure}[htbp]
\begin{tikzpicture}[>= stealth]
\young{{11,9,8,7,5}}{2}{0}
\draw[thick, ->] (6,-1) -- (7,-1);
\draw (7.25,-1) node {};
\end{tikzpicture}
\begin{tikzpicture}
\skewyoung{{15,13,12,11,9}}{{4,4,3,2}}{0}
\young{{4,2,1}}{1}{11}

\foreach \a in {4,...,10}
  \draw (\a/2 + 0.25, -0.25) node {$1$};

\foreach \a in {4,...,10}
  \draw (\a/2 + 0.25, -0.75) node {$2$};

\foreach \a in {4,...,10}
  \draw (\a/2 + 0.25, -1.25) node {$3$};

\foreach \a in {4,...,10}
  \draw (\a/2 + 0.25, -1.75) node {$4$};

\foreach \a in {4,...,8}
  \draw (\a/2 + 0.25, -2.25) node {$5$};

\draw[decorate,decoration={brace},thick] (0,0.25) to (2,0.25);
\draw (1,0.75) node {$\leq c - k$};

\draw[thick, dotted] (5.5,-2.5) -- (5.5,1);
\draw[thick, dotted] (2, -2.5) -- (2, 1);
\end{tikzpicture}
\caption{An example of constructing a $\lambda/\mu$ from a given
  non-skew $\nu$.}
\label{fig:alg}
\end{figure}
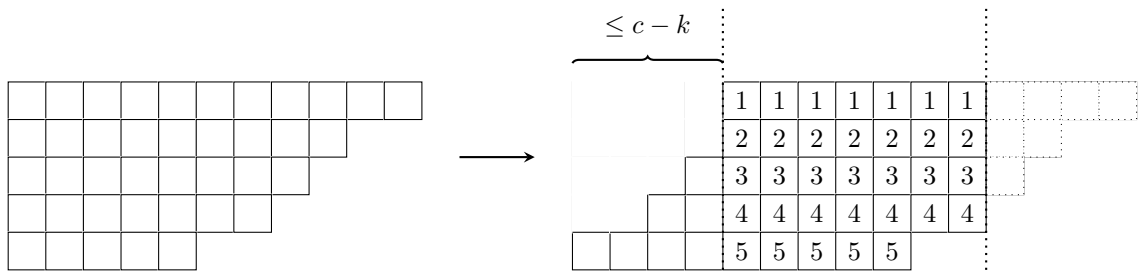

Remove the portion of the spread of $\nu$ that exceeds
$(n-c-k+1,\ldots,n-c-k+1)$, rotate it by $180^\circ$, and append it to
the bottom left of $\nu$. Algebraically, we are defining $\lambda/\mu$
by $\mu_i = \min\{\nu_1 - \nu_i, \nu_1 - \nu_k+c+k-n-1\}$ and
$\lambda_i - \mu_i = \min\{ \nu_k+n-c-k+1,\nu_i\}$. Suppose we label
$\lambda/\mu$ and consider the non-skew diagram $\pi$ that arises from
it. We can maximize the spread of $\pi$ by first maximizing the number
of $1$s in our labeling, then maximizing the number of $2$s, and
so~on. This labeling is achieved by numbering the top of each column
a~$1$, the second square of each column a~$2$, and so on. But this
labeling gives rise to $\nu$, by construction of $\lambda/\mu$. Any
other $\pi$ has spread strictly less than the spread of
$\nu$. Therefore, we have written $\nu$ as a linear combination of a
$(k,c)$-legal diagram and $(k,k)$-legal diagrams with strictly smaller
spread. The result follows by~induction.
\end{proof}
\end{lemma}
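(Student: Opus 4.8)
The plan is to induct on the spread of the given $(k,k)$-legal diagram $\nu$, using the lexicographic well-ordering on spreads introduced above. The base of the induction is the case where the spread of $\nu$ is small, that is, $\nu_1-\nu_k\le n-c-k+1$. In that case I claim $\nu$ is already $(k,c)$-legal, and I would verify this directly against the five conditions of Proposition~\ref{p:legal}: $\nu_1\le n-k+1$ and $\nu_k\ge k$ hold because $\nu$ is $(k,k)$-legal (Corollary~\ref{c:nonskew}); with $\mu=0$ one has $\mu_1=0\le c-k$ and $\nu_i-\mu_i=\nu_i\ge\nu_k\ge k$ for all $i$; and $\nu_1-\nu_k\le n-c-k+1$ is exactly the case hypothesis.

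For the inductive step, suppose instead $\nu_1-\nu_k>n-c-k+1$, so that the spread of $\nu$ strictly exceeds the constant partition $(n-c-k+1,\ldots,n-c-k+1)$. I would manufacture a $(k,c)$-legal \emph{skew} diagram $\lambda/\mu$ whose Littlewood--Richardson expansion has $\nu$ as its leading term, by ``folding'': starting from the Young diagram of $\nu$, truncate the staircase formed by its spread at height $n-c-k+1$, then take the blocks that were cut off, rotate them by $180^\circ$, and reattach them at the bottom-left. This is made precise by explicit formulas for $\mu_i$ and for $\lambda_i-\mu_i$ in terms of $\nu,c,n,k$; the construction is arranged precisely so that $\mu_1\le c-k$, so that the spread of $\lambda$ is at most $n-c-k+1$, and so that each $\lambda_i-\mu_i$ is a minimum of quantities all of which are $\ge k$ (using $\nu_i\ge k$), with $\lambda_1\le n-k+1$ a short count. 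Running these against Proposition~\ref{p:legal} then shows $\lambda/\mu$ is $(k,c)$-legal.

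Next I would apply the Littlewood--Richardson rule to write $S_{\lambda/\mu}=\sum_\pi S_\pi$ and argue that $\nu$ occurs in this sum while every other occurring $\pi$ has strictly smaller spread. That $\nu$ occurs is seen by exhibiting a valid labeling with content $\nu$: label the top cell of each column $1$, the cell below it $2$, and so on. This is weakly increasing along rows (each successive column of $\lambda/\mu$ begins no higher than the previous one, by construction), strictly increasing down columns, and satisfies the lattice-word condition because the labels in each column form an initial segment $1,2,\ldots$; and by construction of $\lambda/\mu$ its content is $\nu$. That $\nu$ has the lexicographically largest spread among all occurring $\pi$ is a greedy argument: to maximize the spread of $\pi$ one first maximizes the number of $1$'s, then the number of $2$'s, and so on, but in any legal labeling a $1$ can only occupy a column top, a $2$ only the cell just below, and so on, so the greedy optimum is forced to be the column-filling labeling above, which produces $\nu$. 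Hence every $\pi\neq\nu$ occurring has strictly smaller spread, and one checks---again from Proposition~\ref{p:legal}, together with the facts that such $\pi$ have at most $k$ parts, $\pi_1\le\lambda_1\le n-k+1$, and $\pi_k\ge k$---that it is $(k,k)$-legal.

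Rewriting $S_\nu=S_{\lambda/\mu}-\sum_{\pi\neq\nu}S_\pi$ then exhibits $S_\nu$ as a linear combination of the $(k,c)$-legal diagram $S_{\lambda/\mu}$ and $(k,k)$-legal diagrams of strictly smaller spread, each of which is a linear combination of $(k,c)$-legal diagrams by the inductive hypothesis; this completes the induction. I expect the main obstacle to be the bookkeeping in the folding construction---pinning down $\lambda/\mu$ so that it is simultaneously $(k,c)$-legal and has $\nu$ as its unique spread-maximal Littlewood--Richardson term---and, within that, the greedy-maximization claim, which ultimately rests on the structural fact that in a Littlewood--Richardson labeling the symbol $i$ can appear only among the top $i$ cells of any column.
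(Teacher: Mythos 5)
Your proposal follows the paper's proof essentially verbatim: induction on the lexicographic spread, with the base case being the ``small spread'' diagrams (which are $(k,c)$-legal by Proposition~\ref{p:legal}), and the inductive step constructing the folded skew shape $\lambda/\mu$, expanding $S_{\lambda/\mu}$ by Littlewood--Richardson, and isolating $\nu$ as the unique maximal-spread term. The key combinatorial fact you identify --- that in an LR filling the label $i$ can only occupy one of the top $i$ cells of its column, so the column-filling labeling dominates every other content --- is exactly the mechanism the paper uses, and in fact your remark makes the greedy-maximization step slightly more rigorous than the paper's terse phrasing (since dominance with fixed sum implies lexicographically smaller spread). You also usefully make explicit a step the paper leaves implicit, namely that the non-leading $\pi$ in the LR expansion are themselves $(k,k)$-legal (via $\pi_1 \le \lambda_1 \le n-k+1$ counting column tops and $\pi_k \ge \lambda_k - \mu_1 \ge k$ counting full columns, exactly as in the paper's proof of the reverse inclusion). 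This is the same argument with the bookkeeping slightly more visible.
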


Finally, we can prove Lemma~\ref{l:toep}:

\begin{proof}[Proof of Lemma~\ref{l:toep}]
  Let $1 \le k \le c \le \frac n2$. We first show that any
  $(k,c)$-legal diagram is a linear combination of $(k,k)$-legal
  diagrams. Suppose $\lambda/\mu$ is $(k,c)$-legal. By
  Proposition~\ref{p:legal}, $\lambda_1 \le n-k+1$ and $\lambda_k \ge
  k$. If $\lambda/\mu$ is non-skew, i.e. $\mu = 0$, then by
  Corollary~\ref{c:nonskew}, $\lambda/\mu$ is $(k,k)$-legal. If
  $\lambda/\mu$ is skew, then we use the Littlewood-Richardson rule to
  write it as a sum of non-skew diagrams. Any labeling of
  $\lambda/\mu$ must have at most $n-k+1$ copies of $1$, since there
  are at most $n-k+1$ columns and each column can have at most
  one~$1$. Thus $\nu_1 \le n-k+1$ for all $\nu$ arising from a
  labeling. Similarly, a labeling must have at least $\lambda_k -
  \mu_1$ copies of $k$, since this is equal to the number of columns
  with length~$k$.  But if $\lambda/\mu$ is associated to the minor in
  Eq.~\eqref{eq:minor}, then $\lambda_k - \mu_1 = i+r - (r-k+1) =
  i+k-1 \ge k$. Therefore $\nu_k \ge k$ for all $\nu$ arising from a
  labeling. By Corollary~\ref{c:nonskew}, $\lambda/\mu$ is a sum of
  $(k,k)$-legal~diagrams.

  The other inclusion follows from Lemma~\ref{l:main}.
\end{proof}

\section{Proof of Theorem~\ref{t:pp}}

A \emph{plane partition} of a positive integer $n$ is a finite
two-dimensional array of positive integers, weakly decreasing from
left to right and from top to bottom, with sum~$n$. For example, a
plane partition of $43$ could be
$$
\begin{array}{ccccc}
5 & 4 & 4 & 3 & 2 \\
4 & 4 & 3 & 3 & 2 \\
2 & 2 & 1 & 1 &  \\
1 & 1 &  & & \\
1 & & & & \\
\end{array}
$$
For more information on plane partitions, see \cite{Bressoud} for a
more comprehensive treatment.

We say that a plane partition \emph{fits inside an $a \times b \times
  c$ box} if there are at most $a$ rows, at most $b$ columns, and each
entry is at most~$c$.  For instance, the example above fits in an $a
\times b \times c$ box for $a,b,c = 5$. This terminology stems from
the fact that there is a natural way to view a plane partition as
cubes stacked against the corner of the first octant in $\RR^3$.

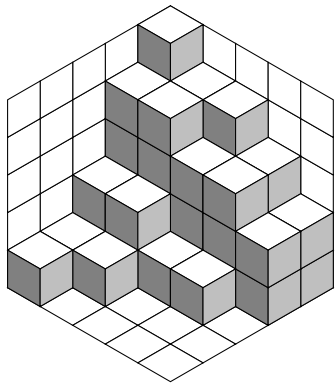
\begin{figure}[htbp]
\begin{tikzpicture}[scale=0.5]
\outline{4}{4}{4}{0};
\pp{{5,4,4,3,2}, {4,4,3,3,2}, {2,2,1,1}, {1,1}, {1}}{0};
\end{tikzpicture}
\caption{The plane partition
  $\{(5,4,4,3,2),(4,4,3,3,2),(2,2,1,1),(1,1),(1)\}$ in a $5 \times 5
  \times 5$ box}
\label{fig:pp}
\end{figure}

Let $\textsc{PP}(a,b,c)$ denote the number of plane partitions that
  fit in an $a \times b \times c$ box.  MacMahon found the following
elegant formula:
\begin{equation}\label{eq:PP}
\textsc{PP}(a,b,c) = \prod_{i=1}^a \prod_{j=1}^b \prod_{k=1}^c
\frac{i+j+k-1}{i+j+k-2}.
\end{equation}
A proof of this formula can be found in \cite{Bressoud}.  Li and
Zanello essentially prove Theorem~\ref{t:pp} in \cite{LiZanello} by
directly verifying that $|\det U_r|$ equals the right hand side
of~\eqref{eq:PP}. In this section, we give a bijective proof of this
result.  Recall the statement of Theorem~\ref{t:pp}:

\begin{fthm}
  Expressed in the monomial $\ZZ$-basis for $R =
  \ZZ[x,y,z]/(x^{a+b},y^{a+c},z^{b+c})$ the map $U_m : R_m \to
  R_{m+1}$ has its determinant $\det(U_m)$ equal, up to sign, to its
  permanent $\perm(U_m)$, and each nonzero term in its permanent
  corresponds naturally to a plane partition in an $a \times b \times
  c$ box.
\end{fthm}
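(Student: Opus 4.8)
The plan is to read the nonzero terms of $\perm(U_m)$ as perfect matchings of a planar bipartite graph, recognize that graph as a hexagonal region, transport matchings to boxed plane partitions via the classical rhombus-tiling correspondence, and deduce $\det=\pm\perm$ from a parity observation of Kasteleyn type. First I would record the shape of $U_m$: in the monomial basis, multiplication by $x+y+z$ sends $x^iy^jz^k\in B_m$ to the sum of those of $x^{i+1}y^jz^k,\,x^iy^{j+1}z^k,\,x^iy^jz^{k+1}$ that survive in $R$, so $U_m$ is a $\{0,1\}$-matrix, and it is square since $h(m)=h(m+1)$. Hence a nonzero term of $\perm(U_m)$ is precisely a perfect matching $M$ of the bipartite graph $G$ on $B_m\sqcup B_{m+1}$ whose edges join $\mu$ to $v\mu$ for $v\in\{x,y,z\}$ (whenever $v\mu\neq 0$ in $R$).

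Next I would identify $G$ with a honeycomb region. Encoding a degree-$d$ monomial $x^iy^jz^k$ by the lattice point $(i,j)$ (the exponent $k=d-i-j$ being determined), the three exponent bounds cut $B_m$ out as the integer points of a convex hexagon with opposite sides of lengths $a$, $b$, $c$; performing this simultaneously for $d=m$ and $d=m+1$ realizes $B_m$ and $B_{m+1}$ as the two triangle orientations tiling a single hexagon $H=H(a,b,c)$, with the edges of $G$ being exactly the adjacent pairs of triangles (equivalently, $G$ is the ``brick-wall'' drawing of the hexagonal lattice). Consequently perfect matchings of $G$ are in bijection with rhombus (lozenge) tilings of $H$, and the classical ``stack of unit cubes in the corner of a box'' bijection turns rhombus tilings of $H$ into plane partitions fitting in an $a\times b\times c$ box. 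Composing these gives the asserted natural correspondence between nonzero terms of $\perm(U_m)$ and boxed plane partitions; combined with the next step it also re-proves $|\det(U_m)|=\textsc{PP}(a,b,c)$ without Li and Zanello's direct evaluation.

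For $\det(U_m)=\pm\perm(U_m)$ it suffices to show that all nonzero terms of the permanent carry the same sign. Given perfect matchings $M_1,M_2$ of $G$, their symmetric difference $M_1\triangle M_2$ is a disjoint union of cycles, and a cycle of length $2\ell$ contributes a factor $(-1)^{\ell-1}$ to the ratio of the two terms' signs, so it is enough that every such cycle has length $\equiv 2\pmod 4$. A cycle $C\subseteq M_1\cup M_2$ bounds a simply connected honeycomb region $D$ all of whose bounded faces are hexagons, and $M_1$ restricts to a perfect matching of $D$, so $D$ has an even number $V$ of vertices; a short Euler-characteristic count gives $V=1+2F+\ell$, where $F$ is the number of hexagonal faces enclosed by $C$ and $2\ell=|C|$, which forces $\ell$ odd. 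Hence every cycle contributes $+1$, all permanent terms share one sign, and $\det(U_m)=\pm\perm(U_m)$. (This is precisely the statement that the honeycomb needs no Kasteleyn sign correction.)

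The step I expect to be the main obstacle is the identification in the second paragraph: one must push through the bookkeeping with the three exponent inequalities to be certain that $B_m\sqcup B_{m+1}$ genuinely assembles into an honest convex hexagon---no dents, no holes---so that $G$ is an induced subgraph of the hexagonal lattice all of whose bounded faces are hexagonal, and one should also spell out explicitly which plane partition a given matching produces so that the correspondence is visibly natural. Everything else is either the short parity computation above or the standard cube/plane-partition bijection.
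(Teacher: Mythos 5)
Your Step 1 (identifying the bipartite graph $G$ on $B_m \sqcup B_{m+1}$ with a honeycomb drawn inside the semiregular hexagon $H(a,b,c)$, so that perfect matchings $\leftrightarrow$ rhombus tilings $\leftrightarrow$ boxed plane partitions) is essentially the paper's first step, executed with the same triangle/hexagon picture. Your Step 2, however, takes a genuinely different route to $\det = \pm\perm$. The paper argues combinatorially \emph{through} the plane-partition bijection: any two plane partitions in the box are connected by single block adds/removes, each of which flips a unit hexagon of the tiling and hence multiplies the matching permutation by a $3$-cycle (an even permutation), so all permanent terms share a sign. You instead give a Kasteleyn-type parity argument intrinsic to the graph: for any two matchings $M_1,M_2$, each alternating cycle $C$ of length $2\ell$ in $M_1\triangle M_2$ encloses a simply connected honeycomb region with only hexagonal internal faces, and the Euler count $V = 1+2F+\ell$ together with evenness of $V$ (since $M_1$ restricts to a perfect matching of the enclosed region, by planarity) forces $\ell$ odd, so each cycle contributes $(-1)^{\ell-1}=+1$. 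Both arguments are correct; yours is more graph-theoretic and does not need the plane-partition interpretation at all (it would apply to any simply connected honeycomb subregion), while the paper's is shorter once the bijection is in hand and is tailored to the plane-partition structure (and foreshadows the variants in Theorems~\ref{t:cspp}--\ref{t:cstcpp}, where the analogous ``move'' is again a $3$-cycle). One small point worth being explicit about in your parity step: you should justify that interior vertices of the region bounded by $C$ are matched by $M_1$ to other vertices inside that region (this follows from planarity, since an $M_1$-edge cannot cross the Jordan curve $C$, and boundary vertices are matched along $C$ because $C$ alternates), which is what makes $M_1|_D$ a perfect matching and $V$ even.
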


\begin{proof}
  Our proof consists of two steps. First, we show that the number of
  plane partitions in an $a \times b \times c$ box is equal to the
  \emph{permanent} of $U_r$. For ease of notation, let $U = U_r$. Let
  $B_s$ denote the monomial basis of $R_s$ as before.  Recall that the
  permanent of $U$ equals
$$
\sum_\varphi \prod_{\lambda \in B_r} U_{\lambda,\varphi(\lambda)}
$$
where the sum is over all bijections $\varphi : B_r \to B_{r+1}$ and
$U_{\lambda,\mu}$ denotes the entry in row $\lambda$, column $\mu$ of
the matrix representing $U$.  Viewing the elements of $B_r \cup
B_{r+1}$ as vertices of a bipartite graph, with $B_r$ as one part and
$B_{r+1}$ as the other, and each $1$ in the matrix of $U$
corresponding to an edge, we see that each such $\varphi$ gives rise
to a perfect matching if $U_{\lambda,\varphi(\lambda)} = 1$ for all $i
\in B_r$. Since $U$ is a $0$-$1$ matrix, the permanent of $U$ counts
the number of perfect matchings in this bipartite graph.\\

\noindent\emph{Step 1: Perfect matchings to plane partitions}. Draw
an equilateral triangle with side length $a+b+c$, and insert the
monomials of $\kk[x,y,z]$ of degree $r+1 = a+b+c-1$;
Figure~\ref{fig:monomials} illustrates the case of $r=3$.
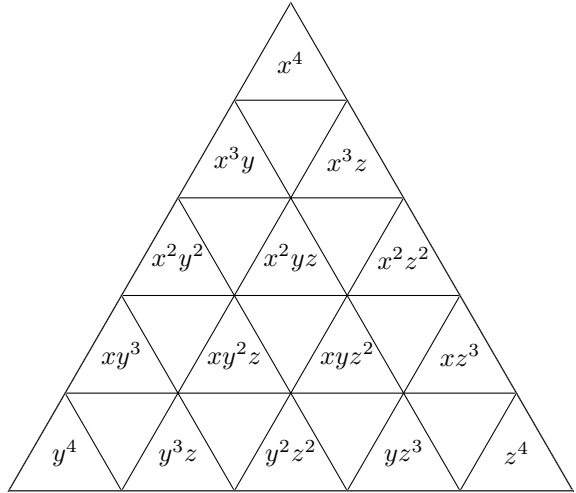
\begin{figure}[htbp]
\begin{tikzpicture}
  \draw (0,0) -- ++(60:1.5) -- ++(0:6) -- ++(-60:1.5) -- cycle;
  \draw (0,0) -- ++(60:3) -- ++(0:4.5) -- ++(-60:3) -- cycle;
  \draw (0,0) -- ++(60:4.5) -- ++(0:3) -- ++(-60:4.5) -- cycle;
  \draw (0,0) -- ++(60:6) -- ++(0:1.5) -- ++(-60:6) -- cycle;
  \draw (0,0) -- ++(60:7.5) -- ++(0:0) -- ++(-60:7.5) -- cycle;

  \draw (0,0) -- ++(60:1.5) -- ++(-60:1.5) -- cycle;
  \draw (0,0) -- ++(60:3) -- ++(-60:3) -- cycle;
  \draw (0,0) -- ++(60:4.5) -- ++(-60:4.5) -- cycle;
  \draw (0,0) -- ++(60:6) -- ++(-60:6) -- cycle;

  \draw (7.5,0) -- ++(120:1.5) -- ++(240:1.5) -- cycle;
  \draw (7.5,0) -- ++(120:3) -- ++(240:3) -- cycle;
  \draw (7.5,0) -- ++(120:4.5) -- ++(240:4.5) -- cycle;
  \draw (7.5,0) -- ++(120:6) -- ++(240:6) -- cycle;

  \draw[thick,white] (0,0) -- ++(60:7.5);
  \draw (0,0) -- ++(60:7.5);
  \draw[thick,white] (7.5,0) -- ++(120:7.5);
  \draw (7.5,0) -- ++(120:7.5);
  \draw[thick,white] (0,0) -- (7.5,0);
  \draw (0,0) -- (7.5,0);

  \draw (0.75, 0.5) node (040) {$y^4$}
    ++(1.5, 0) node (031) {$y^3z$}
    ++(120:1.5) node (130) {$xy^3$}
    ++(60:1.5) node (220) {$x^2y^2$}
    ++(60:1.5) node (310) {$x^3y$}
    ++(60:1.5) node (400) {$x^4$}
    ++(-60:1.5) node (301) {$x^3z$}
    ++(-60:1.5) node (202) {$x^2z^2$}
    ++(-60:1.5) node (103) {$xz^3$}
    ++(-60:1.5) node (004) {$z^4$}
    ++(-1.5,0) node (013) {$yz^3$}
    ++(120:1.5) node (112) {$xyz^2$}
    ++(120:1.5) node (211) {$x^2yz$}
    ++(240:1.5) node (121) {$xy^2z$}
    ++(-60:1.5) node (022) {$y^2z^2$};

  \draw (8,4) node {};
\end{tikzpicture}
\caption{Monomials in $B_{4}$}
\label{fig:monomials}
\end{figure}

Fill the remaining spaces with the monomials of degree $r = a+b+c-2$
as in Figure~\ref{fig:monomials2}; notice that two monomials in the
diagram are adjacent if and only if one divides the other.

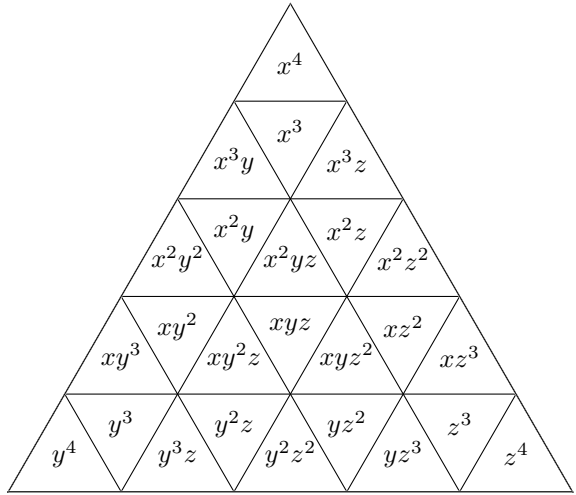
\begin{figure}[htbp]
\begin{tikzpicture}
  \draw (0,0) -- ++(60:1.5) -- ++(0:6) -- ++(-60:1.5) -- cycle;
  \draw (0,0) -- ++(60:3) -- ++(0:4.5) -- ++(-60:3) -- cycle;
  \draw (0,0) -- ++(60:4.5) -- ++(0:3) -- ++(-60:4.5) -- cycle;
  \draw (0,0) -- ++(60:6) -- ++(0:1.5) -- ++(-60:6) -- cycle;
  \draw (0,0) -- ++(60:7.5) -- ++(0:0) -- ++(-60:7.5) -- cycle;

  \draw (0,0) -- ++(60:1.5) -- ++(-60:1.5) -- cycle;
  \draw (0,0) -- ++(60:3) -- ++(-60:3) -- cycle;
  \draw (0,0) -- ++(60:4.5) -- ++(-60:4.5) -- cycle;
  \draw (0,0) -- ++(60:6) -- ++(-60:6) -- cycle;

  \draw (7.5,0) -- ++(120:1.5) -- ++(240:1.5) -- cycle;
  \draw (7.5,0) -- ++(120:3) -- ++(240:3) -- cycle;
  \draw (7.5,0) -- ++(120:4.5) -- ++(240:4.5) -- cycle;
  \draw (7.5,0) -- ++(120:6) -- ++(240:6) -- cycle;

  \draw[thick,white] (0,0) -- ++(60:7.5);
  \draw (0,0) -- ++(60:7.5);
  \draw[thick,white] (7.5,0) -- ++(120:7.5);
  \draw (7.5,0) -- ++(120:7.5);
  \draw[thick,white] (0,0) -- (7.5,0);
  \draw (0,0) -- (7.5,0);

  \draw (0.75, 0.5) node (040) {$y^4$}
    ++(1.5, 0) node (031) {$y^3z$}
    ++(120:1.5) node (130) {$xy^3$}
    ++(60:1.5) node (220) {$x^2y^2$}
    ++(60:1.5) node (310) {$x^3y$}
    ++(60:1.5) node (400) {$x^4$}
    ++(-60:1.5) node (301) {$x^3z$}
    ++(-60:1.5) node (202) {$x^2z^2$}
    ++(-60:1.5) node (103) {$xz^3$}
    ++(-60:1.5) node (004) {$z^4$}
    ++(-1.5,0) node (013) {$yz^3$}
    ++(120:1.5) node (112) {$xyz^2$}
    ++(120:1.5) node (211) {$x^2yz$}
    ++(240:1.5) node (121) {$xy^2z$}
    ++(-60:1.5) node (022) {$y^2z^2$};

  \draw (1.5, 0.9) node (030) {$y^3$}
    ++(60:1.5) node (120) {$xy^2$}
    ++(60:1.5) node (210) {$x^2y$}
    ++(60:1.5) node (300) {$x^3$}
    ++(-60:1.5) node (201) {$x^2z$}
    ++(-60:1.5) node (102) {$xz^2$}
    ++(-60:1.5) node (003) {$z^3$}
    ++(-1.5,0) node (012) {$yz^2$}
    ++(-1.5,0) node (021) {$y^2z$}
    ++(60:1.5) node (111) {$xyz$};

\end{tikzpicture}
\caption{Adding monomials in $B_3$}
\label{fig:monomials2}
\end{figure}
Now truncate the equilateral triangle by removing a triangle
of length $c$ from the top corner, a triangle of length $b$ from the
lower left corner, and a triangle of length $a$ from the lower right
corner. We are left with an $a \times b \times c$ hexagon containing
the monomials in $B_r \cup B_{r+1}$, the vertices of the bipartite
graph mentioned in the previous paragraph. Figure~\ref{fig:monomials3}
shows the hexagon for $(a,b,c) = (2,2,1)$. Figure~\ref{fig:matching}
shows this hexagons with monomials in $B_r$ as black dots and
monomials in $B_{r+1}$ as white dots.
\begin{figure}[htbp]
\begin{tikzpicture}
  \draw (0,0) -- ++(60:1.5) -- ++(0:6) -- ++(-60:1.5) -- cycle;
  \draw (0,0) -- ++(60:3) -- ++(0:4.5) -- ++(-60:3) -- cycle;
  \draw (0,0) -- ++(60:4.5) -- ++(0:3) -- ++(-60:4.5) -- cycle;
  \draw (0,0) -- ++(60:6) -- ++(0:1.5) -- ++(-60:6) -- cycle;
  \draw (0,0) -- ++(60:7.5) -- ++(0:0) -- ++(-60:7.5) -- cycle;

  \draw (0,0) -- ++(60:1.5) -- ++(-60:1.5) -- cycle;
  \draw (0,0) -- ++(60:3) -- ++(-60:3) -- cycle;
  \draw (0,0) -- ++(60:4.5) -- ++(-60:4.5) -- cycle;
  \draw (0,0) -- ++(60:6) -- ++(-60:6) -- cycle;

  \draw (7.5,0) -- ++(120:1.5) -- ++(240:1.5) -- cycle;
  \draw (7.5,0) -- ++(120:3) -- ++(240:3) -- cycle;
  \draw (7.5,0) -- ++(120:4.5) -- ++(240:4.5) -- cycle;
  \draw (7.5,0) -- ++(120:6) -- ++(240:6) -- cycle;

  \draw[thick,white] (0,0) -- ++(60:7.5);
  \draw (0,0) -- ++(60:7.5);
  \draw[thick,white] (7.5,0) -- ++(120:7.5);
  \draw (7.5,0) -- ++(120:7.5);
  \draw[thick,white] (0,0) -- (7.5,0);
  \draw (0,0) -- (7.5,0);

  \draw (0.75, 0.5) node (040) {$y^4$}
    ++(1.5, 0) node (031) {$y^3z$}
    ++(120:1.5) node (130) {$xy^3$}
    ++(60:1.5) node (220) {$x^2y^2$}
    ++(60:1.5) node (310) {$x^3y$}
    ++(60:1.5) node (400) {$x^4$}
    ++(-60:1.5) node (301) {$x^3z$}
    ++(-60:1.5) node (202) {$x^2z^2$}
    ++(-60:1.5) node (103) {$xz^3$}
    ++(-60:1.5) node (004) {$z^4$}
    ++(-1.5,0) node (013) {$yz^3$}
    ++(120:1.5) node (112) {$xyz^2$}
    ++(120:1.5) node (211) {$x^2yz$}
    ++(240:1.5) node (121) {$xy^2z$}
    ++(-60:1.5) node (022) {$y^2z^2$};

  \draw (1.5, 0.9) node (030) {$y^3$}
    ++(60:1.5) node (120) {$xy^2$}
    ++(60:1.5) node (210) {$x^2y$}
    ++(60:1.5) node (300) {$x^3$}
    ++(-60:1.5) node (201) {$x^2z$}
    ++(-60:1.5) node (102) {$xz^2$}
    ++(-60:1.5) node (003) {$z^3$}
    ++(-1.5,0) node (012) {$yz^2$}
    ++(-1.5,0) node (021) {$y^2z$}
    ++(60:1.5) node (111) {$xyz$};

  \fill[draw,white] (0,0) -- ++(3,0) -- ++(120:3) -- cycle;
  \fill[draw,white] ++(60:7.5) -- ++(-60:1.5) -- ++(-1.5,0) -- cycle;
  \fill[draw,white] (7.5,0) -- ++(120:3) -- ++(240:3) -- cycle;
  \draw ++(3,0) -- ++(120:3);
  \draw ++(4.5,0) -- ++(60:3);
  \draw ++(60:6) -- ++(1.5,0);

\end{tikzpicture}
\caption{Truncating equilateral triangle}
\label{fig:monomials3}
\end{figure}
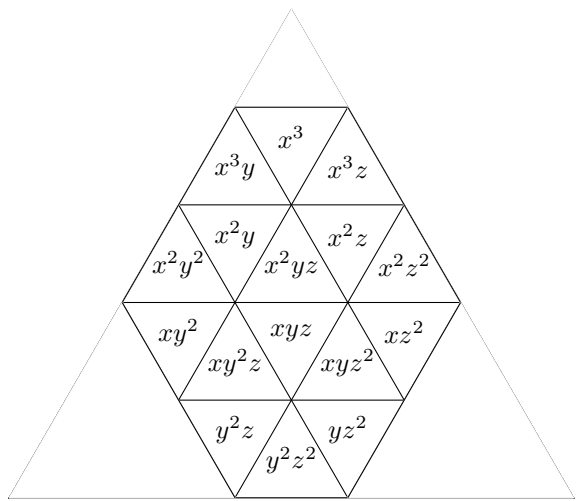
A perfect matching between $B_r$ and $B_{r+1}$ is a bijection such
that for any $\lambda \in B_r$, $\lambda$ and its image are adjacent
in the hexagon, i.e., they lie in adjacent equilateral triangles. We
can thus represent a perfect matching pictorially as in
Figure~\ref{fig:matching}. It is thus evident that there is a
bijection between perfect matchings and rhombus tilings of this $a
\times b \times c$ hexagon.  But the bijection between such rhombus
tilings and plane partitions in an $a \times b \times c$ box is
well-known; we view the tiling as a three-dimensional picture of the
plane partition (rotated by $90^\circ$). Hence, the number of perfect
matchings is equal to the number of such plane partitions, as desired.
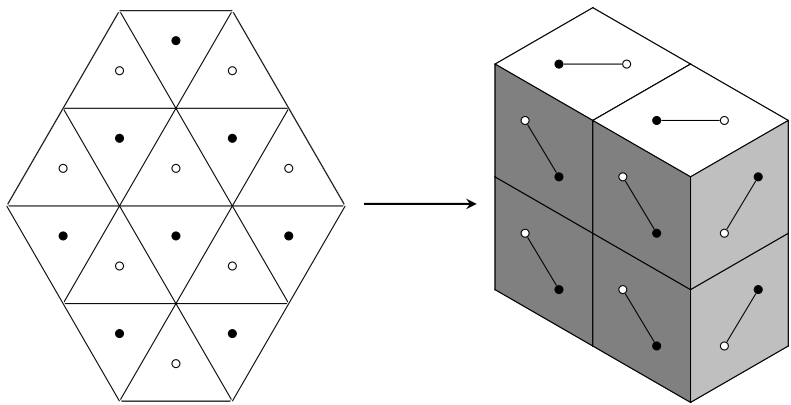
\begin{figure}[htbp]
\hspace{-60pt}
\begin{tikzpicture}[>=stealth]
  \draw (0,0) -- ++(60:1.5) -- ++(0:6) -- ++(-60:1.5) -- cycle;
  \draw (0,0) -- ++(60:3) -- ++(0:4.5) -- ++(-60:3) -- cycle;
  \draw (0,0) -- ++(60:4.5) -- ++(0:3) -- ++(-60:4.5) -- cycle;
  \draw (0,0) -- ++(60:6) -- ++(0:1.5) -- ++(-60:6) -- cycle;
  \draw (0,0) -- ++(60:7.5) -- ++(0:0) -- ++(-60:7.5) -- cycle;

  \draw (0,0) -- ++(60:1.5) -- ++(-60:1.5) -- cycle;
  \draw (0,0) -- ++(60:3) -- ++(-60:3) -- cycle;
  \draw (0,0) -- ++(60:4.5) -- ++(-60:4.5) -- cycle;
  \draw (0,0) -- ++(60:6) -- ++(-60:6) -- cycle;

  \draw (7.5,0) -- ++(120:1.5) -- ++(240:1.5) -- cycle;
  \draw (7.5,0) -- ++(120:3) -- ++(240:3) -- cycle;
  \draw (7.5,0) -- ++(120:4.5) -- ++(240:4.5) -- cycle;
  \draw (7.5,0) -- ++(120:6) -- ++(240:6) -- cycle;

  \draw[thick,white] (0,0) -- ++(60:7.5);
  \draw (0,0) -- ++(60:7.5);
  \draw[thick,white] (7.5,0) -- ++(120:7.5);
  \draw (7.5,0) -- ++(120:7.5);
  \draw[thick,white] (0,0) -- (7.5,0);
  \draw (0,0) -- (7.5,0);

  \draw (0.75, 0.5) node (040) [circle, fill=white] {}
    ++(1.5, 0) node (031) [circle, fill=white] {}
    ++(120:1.5) node (130) [circle, fill=white] {}
    ++(60:1.5) node (220) [circle, fill=white] {}
    ++(60:1.5) node (310) [circle, fill=white] {}
    ++(60:1.5) node (400) [circle, fill=white] {}
    ++(-60:1.5) node (301) [circle, fill=white] {}
    ++(-60:1.5) node (202) [circle, fill=white] {}
    ++(-60:1.5) node (103) [circle, fill=white] {}
    ++(-60:1.5) node (004) [circle, fill=white] {}
    ++(-1.5,0) node (013) [circle, fill=white] {}
    ++(120:1.5) node (112) [circle, fill=white] {}
    ++(120:1.5) node (211) [circle, fill=white] {}
    ++(240:1.5) node (121) [circle, fill=white] {}
    ++(-60:1.5) node (022) [circle, fill=white] {};

  \draw (1.5, 0.9) node (030) [circle, fill=black] {}
    ++(60:1.5) node (120) [circle, fill=black] {}
    ++(60:1.5) node (210) [circle, fill=black] {}
    ++(60:1.5) node (300) [circle, fill=black] {}
    ++(-60:1.5) node (201) [circle, fill=black] {}
    ++(-60:1.5) node (102) [circle, fill=black] {}
    ++(-60:1.5) node (003) [circle, fill=black] {}
    ++(-1.5,0) node (012) [circle, fill=black] {}
    ++(-1.5,0) node (021) [circle, fill=black] {}
    ++(60:1.5) node (111) [circle, fill=black] {};

  \fill[draw,very thick,white] (0,0) -- ++(3,0) -- ++(120:3) -- cycle;
  \fill[draw,very thick,white] ++(60:7.5) -- ++(-60:1.5) -- ++(-1.5,0)
  -- cycle;
  \fill[draw,very thick,white] (7.5,0) -- ++(120:3) -- ++(240:3) --
  cycle;
  \draw ++(3,0) -- ++(120:3);
  \draw ++(4.5,0) -- ++(60:3);
  \draw ++(60:6) -- ++(1.5,0);

  \draw [thick, ->] (6.25,2.625) -- (7.75,2.625);
  \draw (7.75, 2.625) node {};
\end{tikzpicture}
\begin{tikzpicture}[scale=1.5]
  \tikzstyle{every circle node}=[draw,minimum size=2pt,inner sep=0]
  \outline{1}{0}{1}{1}
  \pp{{2,2}}{1}
\end{tikzpicture}
\caption{Left: Simplified diagram. Right: A perfect matching viewed as
  a plane partition}
\label{fig:matching}
\end{figure}
\\ 

\noindent\emph{Step 2: Permanent equals determinant}. Consider the
bijection between perfect matchings and plane partitions given
above. To get from one plane partition to another, we perform the
operations of adding or removing a block. For example,
Figure~\ref{fig:rotation} shows the plane partition with the
right-most block removed (or added, depending on perspective, but we
fix perspective at the beginning).
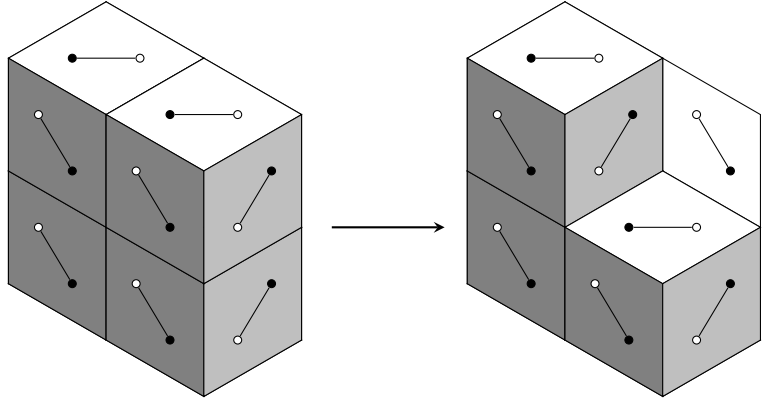
\begin{figure}[htbp]
\begin{tikzpicture}[scale=1.5, >=stealth]
  \tikzstyle{every circle node}=[draw,minimum size=2pt,inner sep=0]
  \outline{1}{0}{1}{1}
  \pp{{2,2}}{1}
  \draw [thick, ->] (2,0) -- (3,0);
  \draw (3,0.5) node {};
\end{tikzpicture}
\begin{tikzpicture}[scale=1.5]
  \tikzstyle{every circle node}=[draw,minimum size=2pt,inner sep=0]
  \outline{1}{0}{1}{1}
  \pp{{2,1}}{1}
\end{tikzpicture}
\caption{Removing a block}
\label{fig:rotation}
\end{figure}

As Figure~\ref{fig:rotation} shows, if $\varphi$ is the perfect
matching associated with a plane partition, the associated perfect
matching after such an operation is $\sigma\varphi$, where $\sigma$ is
a 3-cycle. Note that $\sigma$ is an even permutation; hence, we can
get from any perfect matching to another using even permutations. This
shows that all perfect matchings have the same sign, completing the
proof.
\end{proof}

\section{Proofs of Theorems~\ref{t:cspp}, \ref{t:tcpp}, and \ref{t:cstcpp}}%
In \cite{Bressoud}, Bressoud describes the ten symmetry classes of
plane partitions that are of interest. Each class consists of plane
partitions that are invariant under certain actions. Each of these
actions is some composition $\rho^i \tau^j \kappa^k$ where $(i,j,k)
\in \{0,1\}^3$, $\rho$ is cyclically permuting the three coordinate
axes, $\tau$ is reflection across the $y = z$ plane, and $\kappa$ is
complementation. Viewing plane partitions as rhombus tilings in the
plane, it is straightforward to check that $\rho$ acts by
counterclockwise rotation by $120^\circ$, $\tau$ acts by reflection
about the vertical axis of symmetry, and $\kappa$ acts by rotation
by~$180^\circ$. Note that $\<\rho,\tau,\kappa\> = D_{12}$, the
dihedral group of order~$12$, which is the group of symmetries on a
regular hexagon. Given the above bijection, each symmetry class of
plane partitions can be viewed as a subset of perfect matchings of the
aforementioned bipartite graphs which are invariant under some
subgroup of $D_{12}$. The ten symmetry classes are enumerated in
Figure~\ref{fig:PPtable} in the same order as in~\cite{Bressoud}.
\begin{figure}[htbp]
\begin{tabular}{|c|l|c|c|}
\hline
Number & Class & Abbreviation &  Subgroup \\
\hline
1 & Plane partitions & \textsc{PP} & $\<e\>$ \\
\hline
2 & Symmetric plane partitions & \textsc{SPP} & $\<\tau\>$ \\
\hline
3 & Cyclically symmetric plane partitions & \textsc{CSPP} & $\<\rho\>$
\\
\hline
4 & Totally symmetric plane partitions & \textsc{TSPP} &
$\<\rho,\tau\>$ \\
\hline
5 & Self-complementary plane partitions & \textsc{SCPP} & $\<\kappa\>$
\\
\hline
6 & Transpose complement plane partitions & \textsc{TCPP} &
$\<\tau\kappa\>$ \\
\hline
7 & Symmetric self-complementary plane partitions & \textsc{SSCPP} &
$\<\tau,\kappa\>$\\
\hline
8 & Cyclically symmetric transpose complement plane partitions
& \textsc{CSTCPP} & $\<\rho,\tau\kappa\>$ \\
\hline
9 & Cyclically symmetric self-complementary plane partitions & \textsc{CSSCPP} &
$\<\rho,\kappa\>$ \\
\hline
10 & Totally symmetric self-complementary plane partitions & \textsc{TSSCPP} &
$\<\rho,\tau,\kappa\> = D_{12}$ \\
\hline
\end{tabular}
\caption{The symmetry classes of plane partitions}
\label{fig:PPtable}
\end{figure}

We have shown that, for $R = \kk[x,y,z]/(x^{a+b},y^{a+c},z^{b+c})$ and
$r = a+b+c-2$, we have $|\det U_r | = \textsc{PP}(a,b,c)$. In this
section, we show that, for certain natural submodules of $R$ over
various invariant subrings (for appropriate group actions in each
case), the determinant of the restriction of $U_r$ to the submodule is
equal (up to sign) to the number of a symmetry class of plane
partitions that fit in an $a \times b \times c$ box.

\newpage

\subsection*{Proof of Theorem~\ref{t:cspp}}
Recall the statement of Theorem~\ref{t:cspp}:

\begin{fthm}
  Assuming $a=b=c$, let $C_3 = \ZZ/3\ZZ = \{1,\rho,\rho^2\}$ act on $R
  = \ZZ[x,y,z]/(x^{2a},y^{2a},z^{2a})$ by cycling the variables $x
  \stackrel{\rho}{\to} y \stackrel{\rho}{\to} z \stackrel{\rho}{\to}
  x$. Then the map $U_m|_{R^{C_3}}$ restricted to the $m$-th
  homogeneous component of the $C_3$-invariant subring $R^{C_3}$ has
  $\det(U_m|_{R^{C_3}})$ equal, up to sign, to the number of
  cyclically symmetric plane partitions in an $a \times a \times a$
  box.
\end{fthm}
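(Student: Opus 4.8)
The plan is to run the two-step strategy of the proof of Theorem~\ref{t:pp} equivariantly, descending everything to the quotient by $C_3$. First I would fix the $\ZZ$-basis of $R^{C_3}$ given by orbit sums. Since here $m = 3a-2$ and $m+1 = 3a-1$ are not divisible by $3$, no monomial $x^iy^jz^k$ in $B_m$ or $B_{m+1}$ is $C_3$-fixed (fixed forces $i=j=k$), so every $C_3$-orbit in $B_m$ and in $B_{m+1}$ has size exactly $3$; the orbit sums $O_\lambda := \lambda + \rho\lambda + \rho^2\lambda$ then form $\ZZ$-bases of $(R^{C_3})_m$ and $(R^{C_3})_{m+1}$, of equal rank $h(m)/3 = h(m+1)/3$, so $U_m|_{R^{C_3}}$ is square. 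Because $x+y+z$ is $C_3$-invariant, a direct computation shows that in these bases the $([\nu],[\mu])$ entry of $U_m|_{R^{C_3}}$ equals the number of edges between the orbit $[\nu]\subseteq B_{m+1}$ and the orbit $[\mu]\subseteq B_m$ in the bipartite ``hexagon graph'' of Theorem~\ref{t:pp}; that is, $U_m|_{R^{C_3}}$ is the adjacency matrix of the quotient multigraph of that bipartite graph by $C_3$.

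\emph{Step one: the permanent counts cyclically symmetric plane partitions.} Expanding $\perm(U_m|_{R^{C_3}}) = \sum_\varphi \prod_{[\mu]} (U_m|_{R^{C_3}})_{[\mu],\varphi([\mu])}$ over bijections $\varphi$ from column orbits to row orbits, a single term counts the ways to pick, for each column orbit $[\mu]$ (via its representative), one hexagon-graph edge into the matched orbit $\varphi([\mu])$. Such a choice is exactly the data of a $C_3$-equivariant perfect matching of the hexagon graph, and conversely each $C_3$-invariant perfect matching arises this way exactly once; hence $\perm(U_m|_{R^{C_3}})$ is the number of $C_3$-invariant perfect matchings. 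The bijection of Theorem~\ref{t:pp} between perfect matchings of the hexagon graph and plane partitions in the box is equivariant for $\rho$, since (as recalled at the start of this section) the monomial automorphism $x\to y\to z\to x$ induces the $120^\circ$ rotation of the hexagon, i.e.\ cyclic permutation of the three axes of the box. Therefore $C_3$-invariant matchings correspond to cyclically symmetric plane partitions in an $a\times a\times a$ box.

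\emph{Step two: the determinant equals the permanent up to sign.} As in Theorem~\ref{t:pp}, it suffices to show every bijection $\varphi$ of orbits occurring with a nonzero permanent term has the same sign, and by Step one each such $\varphi$ comes from at least one cyclically symmetric plane partition. Now the set of cyclically symmetric plane partitions in the box, ordered by containment, is the distributive lattice of order ideals of the quotient poset $([a]\times[a]\times[a])/C_3$, whose covering relations are: add or remove a free $C_3$-orbit of three off-diagonal cubes, or add or remove a single diagonal cube $(i,i,i)$. I would check each covering move changes $\varphi$ only by an even permutation. The quotient of the triangulated hexagon by $C_3$ is a disk with a single order-$3$ cone point (the image of the unique fixed point of the action), and the covering is $3$-to-$1$ and unramified away from that point. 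A move adding an off-diagonal free orbit is, upstairs, three disjoint hexagon-flips permuted by $C_3$ (each a $3$-cycle in the sense of Theorem~\ref{t:pp}, Step~2), and these descend to a single $3$-cycle on the three affected quotient orbits, which is even. A move adding a diagonal cube is a $C_3$-equivariant single hexagon-flip at the cone point; it merely replaces one parallel edge of the quotient multigraph by another between the same pair of orbits, leaving $\varphi$ unchanged. Since the lattice is connected through these moves, all relevant $\varphi$ share a sign, so $\det(U_m|_{R^{C_3}}) = \pm\,\perm(U_m|_{R^{C_3}})$ is the claimed count.

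The main obstacle is Step two, specifically the sign bookkeeping at the fixed point: one must verify carefully that the diagonal-cube flip is genuinely $C_3$-equivariant and hence induces no permutation of orbits, and that off-diagonal flips lift to three disjoint equivariant flips, which amounts to making precise the ``disk with one order-$3$ cone point'' picture of the quotient; one also needs to cite or reprove that cyclically symmetric plane partitions form a distributive lattice with exactly the stated covering relations. The rest is a routine adaptation of Section~\ref{t:pp}.
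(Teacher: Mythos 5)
Your proposal is correct and follows essentially the same route as the paper's proof: orbit-sum bases, identification of $U_m|_{R^{C_3}}$ with the adjacency matrix of the quotient multigraph (including the single doubled edge coming from the central hexagon), permanent counting $C_3$-invariant matchings, and sign-invariance via orbit-of-three flips (even $3$-cycles) and the central flip (which only swaps parallel edges, leaving $\varphi$ unchanged). The only cosmetic difference is that you organize the connectivity step through the distributive lattice of order ideals of the quotient poset, where the paper simply removes orbits or the central block one move at a time; the details you flag (disjointness of the three lifted flips, equivariance at the fixed hexagon) do check out exactly as you anticipate.
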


\begin{proof}
  The proof is in the same spirit as our proof for vanilla plane
  partitions: we show that the permanent of $U'_r$ is equal to the
  cyclically invariant perfect matchings (which correspond to
  cyclically symmetric plane partitions), and then show that the
  permanent equals the determinant up to sign.

  The elements $x^iy^jz^k + y^iz^jx^k + z^ix^jy^k$ form a basis for
  $R^\rho$.  Hence, $U'_r$ can be realized as a matrix with columns
  indexed by the basis elements of rank $r$ of $R^\rho$, and rows
  indexed by the basis elements of rank $r+1$ of $R^\rho$.

  The element $\rho$ acts on the hexagonal bipartite graph $G$ by
  rotation by 120 degrees, so we can consider the quotient graph
  $G/\<\rho\>$, where the vertices and edges are $\<\rho\>$-orbits of
  vertices and edges in $G$, respectively.
  Figure~\ref{fig:orbitgraph} shows an example of the honeycomb graph
  and its corresponding quotient graph when $a = b = c = 2$.
 
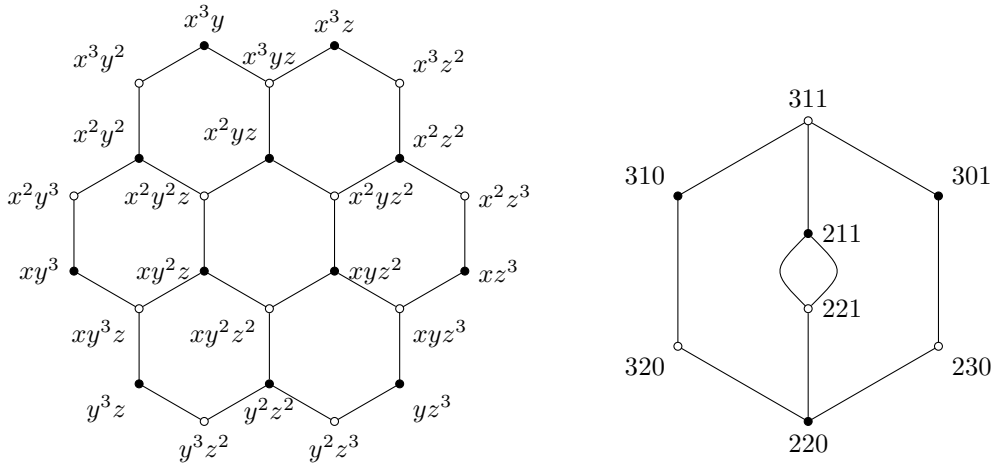
\begin{figure}[htbp]
\begin{tikzpicture}
\matrix{
  \draw (0,0) node (211) [circle, fill=black,label=120:$x^2yz$] {}
    -- ++(210:1) node (221) [circle, fill=white,label=left:$x^2y^2z$] {}
    -- ++(270:1) node (121) [circle, fill=black,label=left:$xy^2z$] {}
    -- ++(-30:1) node (122) [circle, fill=white,label=210:$xy^2z^2$] {}
    -- ++(30:1) node (112) [circle, fill=black,label=right:$xyz^2$] {}
    -- ++(90:1) node (212) [circle, fill=white,label=right:$x^2yz^2$] {}
    -- (211);

  \draw (211)
    -- ++(90:1) node (311) [circle, fill=white,label=90:$x^3yz$] {}
    -- ++(30:1) node (301) [circle, fill=black,label=90:$x^3z$] {}
    -- ++(-30:1) node (302) [circle, fill=white,label=30:$x^3z^2$] {}
    -- ++(-90:1) node (202) [circle, fill=black,label=30:$x^2z^2$] {}
    -- ++(-30:1) node (203) [circle, fill=white,label=right:$x^2z^3$] {}
    -- ++(-90:1) node (103) [circle, fill=black,label=right:$xz^3$] {}
    -- ++(210:1) node (113) [circle, fill=white,label=-30:$xyz^3$] {}
    -- ++(-90:1) node (013) [circle, fill=black,label=-30:$yz^3$] {}
    -- ++(210:1) node (023) [circle, fill=white,label=-90:$y^2z^3$] {}
    -- ++(150:1) node (022) [circle, fill=black,label=-90:$y^2z^2$] {}
    -- ++(210:1) node (032) [circle, fill=white,label=-90:$y^3z^2$] {}
    -- ++(150:1) node (031) [circle, fill=black,label=210:$y^3z$] {}
    -- ++(90:1) node (131) [circle, fill=white,label=210:$xy^3z$] {}
    -- ++(150:1) node (130) [circle, fill=black,label=left:$xy^3$] {}
    -- ++(90:1) node (230) [circle, fill=white,label=left:$x^2y^3$] {}
    -- ++(30:1) node (220) [circle, fill=black,label=150:$x^2y^2$] {}
    -- ++(90:1) node (320) [circle, fill=white,label=150:$x^3y^2$] {}
    -- ++(30:1) node (310) [circle, fill=black,label=90:$x^3y$] {}
    -- (311);

  \draw (220) -- (221);
  \draw (121) -- (131);
  \draw (122) -- (022);
  \draw (112) -- (113);
  \draw (212) -- (202); &
  \draw[white] (0,0) -- (1,0); &
  \draw (0,-1) node (211) [circle, fill=black, label=right:$211$]{}
  .. controls (-0.5, -1.5) .. (0,-2) node (221) [circle, fill=white,
  label=right:$221$]{};
  \draw (211) .. controls (0.5, -1.5) .. (221);
  \draw (0,-1) -- ++(90:1.5) node (311) [circle, fill=white,
  label=90:$311$]{}
  -- ++(210:2) node (310) [circle, fill=black, label=150:$310$]{}
  -- ++(-90:2) node (320) [circle, fill=white, label=210:$320$]{}
  -- ++(-30:2) node (220) [circle, fill=black,  label=-90:$220$]{}
  -- ++(30:2) node (230) [circle, fill=white, label=-30:$230$]{}
  -- ++(90:2) node (301) [circle, fill=black,  label=30:$301$]{}
  -- (311);
  \draw (220) -- (221);
  \\
};
\end{tikzpicture}
\caption{$G$ (left) and $G/\<\rho\>$ (right) for $a = b = c = 2$}
\label{fig:orbitgraph}
\end{figure}
  Since $r = 3a-2 \equiv 1 \pmod 3$ and $r+1 \equiv 2 \pmod 3$, no
  monomial in either rank is fixed under~$\rho$.  It follows that each
  vertex and edge orbit correspond to three vertices and edges in $G$,
  respectively.  Hence, cyclically symmetric perfect matchings on $G$
  correspond to perfect matchings on $G/\<\rho\>$.

  Moreover, there is a natural correspondence between the vertices in
  the quotient graph and the basis elements of the two middle ranks.
  Depending on the context, we will abuse notation and write $[i,j,k]$
  to mean both a vertex orbit $[x^iy^jz^k]$ or a basis element
  $x^iy^jz^k + y^iz^jx^k + z^ix^jy^k$.

  In the quotient graph, every vertex orbit $[i,j,k]$ is connected to
  $[i+1,j,k]$, $[i,j+1,k]$ or $[i,j,k+1]$ (if those orbits exist), and
  similarly, $U_r'$ sends any basis element $[i,j,k]$ to a sum of
  basis elements $[i+1,j,k] + [i,j+1,k] + [i,j,k+1]$ (if those
  elements exist.)

  Note that the orbits $[a,a,a-1]$ and $[a,a-1,a-1]$ share two edges
  in the quotient graph.  The only way for two orbits to share two
  edges is if all elements of both orbits entirely make up the
  vertices of a hexagon in~$G$.  Hence, only the middle hexagon gives
  rise to this double edge in the quotient graph.  Likewise, as basis
  elements, $U_r'[a,a-1,a-1] = [a+1,a-1,a-1] + 2 \cdot [a,a,a-1]$, and
  $[a,a-1,a-1]$ is the only element that gets sent to two times
  another basis element.  Hence, $U_r'$ is the adjacency matrix of
  $G/\<\rho\>$.  The number of perfect matchings on $G/\<\rho\>$ is
  equal to the permanent of $U_r'$ (by treating the sole $2$ in $U_r'$
  as the two possible ways to have $[a,a-1,a-1]$ and $[a,a,a-1]$
  connected in a perfect matching), and it follows that the permanent
  of $U_r'$ equals the number of cyclically symmetric plane
  partitions.

  Next, we show that the permanent equals the determinant up to sign
  by showing all the matchings of $G/\<\rho\>$ have the same sign.  To
  get from a cyclically symmetric partition to one with fewer blocks,
  we can remove three blocks in the same orbit, or remove the block in
  the center of the partition.  If $\varphi$ is the perfect matching
  in $G/\<\rho\>$ that corresponds to the partition, the partition
  after removing three blocks in the same orbit corresponds to $\sigma
  \varphi$, where $\sigma$ is a $3$-cycle, so this operation doesn't
  change the sign of the matching. Removing the central block
  corresponds to switching the edge between $[a, a-1, a-1]$ and
  $[a,a,a-1]$ to the other edge, and thus does not change the
  sign. Figure~\ref{fig:centralblock} gives an example of these
  removals in action.

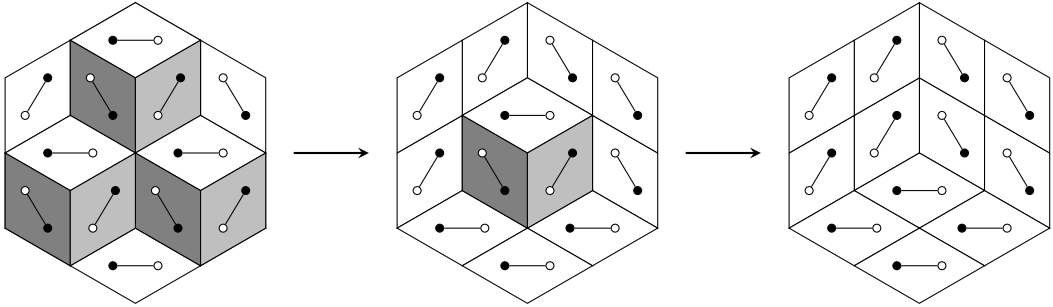
\begin{figure}[htbp]
\begin{tikzpicture}[>= stealth]
\matrix{
\outline{1}{1}{1}{1};
\pp{{2,1}, {1}}{1};&
\draw (0,0) node {};
\draw [thick, ->] (0.25,0) node{} -- (1.25,0) node{};
\draw (1.5,0) node {};  &
\outline{1}{1}{1}{1};
\pp{{1}}{1};&
\draw (0,0) node {};
\draw [thick, ->] (0.25,0) node{} -- (1.25,0) node{};
\draw (1.5,0) node {};  &
\outline{1}{1}{1}{1};\\
};
\end{tikzpicture}
\caption{Removing three blocks in the same orbit, then removing the
  central block}
\label{fig:centralblock}
\end{figure}
The signs of all the matchings in $G/\<\rho\>$ are the same, so the
permanent is equal to the determinant up to sign, and the result
follows.

\end{proof}

\subsection*{Proof of Theorem~\ref{t:tcpp}}
Recall the statement of Theorem~\ref{t:tcpp}:

\begin{fthm}
  Assuming $a=b$ and the product $abc$ is even, let $C_2 = \ZZ/2\ZZ =
  \{1,\tau\kappa\}$ act on $R = \ZZ[x,y,z]/(x^{2a},y^{a+c},z^{a+c})$
  by swapping $y \leftrightarrow z$.  Then the map $U_m|_{R^{C_2,-}}$
  restricted to the $m$-th homogeneous component of the anti-invariant
  submodule $R^{C_3,-} := \{f \in R : \tau\kappa(f) = -f\}$ has
  $\det(U_m|_{R^{C_2,-}})$ equal, up to sign, to the number of
  transpose complementary plane partitions in an $a \times a \times c$
  box.
\end{fthm}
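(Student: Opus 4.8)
The plan is to follow the template of the proofs of Theorems~\ref{t:pp} and~\ref{t:cspp}: realize $U'_m := U_m|_{R^{C_2,-}}$ as a $0$-$1$ matrix, identify its permanent with a count of perfect matchings of a graph, biject those matchings with transpose‑complement plane partitions, and then show the permanent equals the determinant up to sign. Write $\sigma := \tau\kappa$ for the involution of $R$ swapping $y \leftrightarrow z$; since $\sigma(x+y+z)=x+y+z$, the map $U_m$ commutes with $\sigma$ and restricts to $R^{C_2,-}$. A $\ZZ$-basis of $R_s^{C_2,-}$ is
$$
e_{i,j,k} := x^iy^jz^k - x^iz^jy^k, \qquad i+j+k = s,\ j > k,\ x^iy^jz^k \neq 0 \text{ in } R,
$$
and a short direct computation (using that $x^py^qz^s$ and its $\sigma$-image coincide exactly when $q=s$) gives
$$
U_m(e_{i,j,k}) = e_{i+1,j,k} + e_{i,j+1,k} + e_{i,j,k+1},
$$
with the convention $e_{p,q,s} := 0$ whenever $q=s$ or $x^py^qz^s = 0$ in $R$. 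So in the basis $\{e_{i,j,k}\}$ the matrix $U'_m$ is a $0$-$1$ matrix; since $h(m)=h(m+1)$ (because $A+B+C=4a+2c$ is even) and the numbers of $\sigma$-fixed monomials in degrees $m$ and $m+1$ agree, $U'_m$ is square, so $\perm(U'_m)$ counts the perfect matchings of the bipartite graph $G^-$ whose vertex classes are the $e$-bases of $R_m^{C_2,-}$ and $R_{m+1}^{C_2,-}$, with an edge from $e_{i,j,k}$ to $e_{i',j',k'}$ whenever the latter appears (with coefficient $1$) in $U'_m(e_{i,j,k})$. Comparing with Theorem~\ref{t:pp}, $G^-$ is exactly the induced subgraph of the hexagon honeycomb $G$ for the $a\times a\times c$ box on the vertices $x^iy^jz^k$ with $j>k$ — the open ``half hexagon'' cut out by the reflection axis of $\sigma$.

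\emph{Step 1 (matchings of $G^-$ versus transpose‑complement plane partitions).} The reflection $\sigma$ fixes exactly the ``axis'' vertices $x^iy^jz^j$ of $G$. In any $\sigma$-invariant perfect matching of $G$, an axis vertex must be matched to another axis vertex (a matching is a bijection), and the only axis‑to‑axis edges are those of the form $x^iy^jz^j \sim x^{i+1}y^jz^j$; the hypothesis that $abc$ is even is used here to guarantee that these edges constitute a (necessarily unique) perfect matching of the axis vertices. Since $G$ minus its axis is the disjoint union $G^- \sqcup \sigma(G^-)$ with no edges across, $\sigma$-invariant perfect matchings of $G$ correspond bijectively to perfect matchings of $G^-$. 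Combining this with the chain of bijections from Theorem~\ref{t:pp} (perfect matchings of $G$ $\leftrightarrow$ rhombus tilings of the $a\times a\times c$ hexagon $\leftrightarrow$ plane partitions in the box) together with the identification of the $\sigma$-action with the $\tau\kappa$-symmetry of the hexagon, we get that perfect matchings of $G^-$ correspond to $\tau\kappa$-invariant rhombus tilings, i.e.\ to transpose‑complement plane partitions in the $a\times a\times c$ box; hence $\perm(U'_m)$ equals this count.

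\emph{Step 2 (permanent equals determinant up to sign).} As in Theorems~\ref{t:pp} and~\ref{t:cspp}, it suffices to show all perfect matchings of $G^-$, viewed as bijections from the black vertex class to the white one, have the same sign. The graph $G^-$ is an induced honeycomb subgraph on a simply connected planar region (a truncated half‑hexagon), so its perfect matchings are the lozenge tilings of that region and its flip graph is connected; each elementary flip — rotating the three lozenges around a hexagonal face of $G^-$, equivalently adding or removing one $\sigma$-orbit of cubes from the corresponding transpose‑complement plane partition (no cube of such an orbit lies on the symmetry axis, since axis cubes have forced heights) — induces a $3$-cycle on the bijection and hence preserves the sign. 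Therefore $\det(U'_m) = \pm\,\perm(U'_m)$, equal up to sign to the number of transpose‑complement plane partitions in an $a\times a\times c$ box.

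\emph{Main obstacle.} The delicate point is the axis analysis in Step~1: one must pin down precisely how the fixed locus of $\sigma$ sits inside $G$ and verify that, exactly under the parity hypothesis, the axis subgraph carries a unique perfect matching, so that $\sigma$-invariant matchings of $G$ restrict bijectively onto matchings of $G^-$ — the parity of $c$ visibly alters the local picture near the axis, and this is where ``$abc$ even'' genuinely enters. A secondary, more routine matter is confirming connectivity of the flip graph of $G^-$ and that the elementary moves meeting the truncated boundary still act by $3$-cycles.
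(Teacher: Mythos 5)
Your proof is correct and essentially identical to the paper's: the basis $e_{i,j,k} = x^iy^jz^k - x^iy^kz^j$ with $j>k$, the identification of $U'_m$ as a $0$--$1$ adjacency matrix, your half-hexagon $G^-$ (which is precisely the paper's $G'$, i.e.\ $G/\langle\tau\kappa\rangle$ with axis vertices $[i,j,j]$ deleted), and the sign argument via $3$-cycle flips all match the paper's ``move a visible block to its transpose-complementary location''. Your instinct to flag the axis analysis is sound and worth recording: the forced matching $x^iy^jz^j \mapsto x^{i+1}y^jz^j$ is a perfect matching of the axis vertices precisely when $c$ is even, which is in fact a sharper condition than the stated ``$abc$ even''.
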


\begin{proof}
  This time, the transpose complementary plane partitions correspond
  to perfect matchings on $G$ that are symmetric about the vertical
  axis of symmetry (flip symmetric).  Note that all the vertices on
  the vertical axis are of the form $x^iy^jz^j$, and in any symmetric
  perfect matching, $x^iy^jz^j$ must be matched with $x^{i+1}y^jz^j$
  to preserve flip symmetry.

  On $G/\<\tau\kappa\>$, erase all vertices $[i,j,j]$ and all edges
  coming out of them.  Call this new graph~$G'$.  From flip symmetry
  and our previous observation, flip symmetric matchings on $G$
  correspond directly to matchings on $G'$.  Figure~\ref{fig:TSorbit}
  gives an example of $G'$ when $a = b = c = 2$.

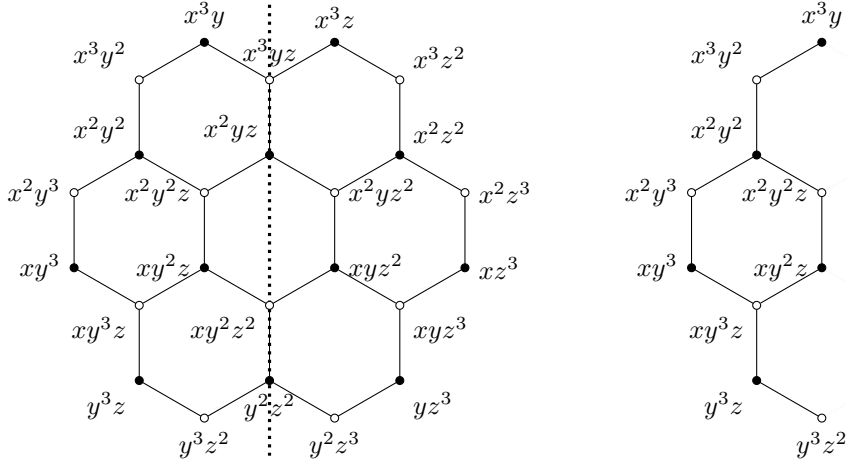
\begin{figure}
\begin{tikzpicture}
  \matrix{
  \draw [very thick, dotted] (0,2) -- (0,-4);
  \draw (0,0) node (211) [circle, fill=black,label=120:$x^2yz$] {}
    -- ++(210:1) node (221) [circle, fill=white,label=left:$x^2y^2z$] {}
    -- ++(270:1) node (121) [circle, fill=black,label=left:$xy^2z$] {}
    -- ++(-30:1) node (122) [circle, fill=white,label=210:$xy^2z^2$] {}
    -- ++(30:1) node (112) [circle, fill=black,label=right:$xyz^2$] {}
    -- ++(90:1) node (212) [circle, fill=white,label=right:$x^2yz^2$] {}
    -- (211);

  \draw (211)
    -- ++(90:1) node (311) [circle, fill=white,label=90:$x^3yz$] {}
    -- ++(30:1) node (301) [circle, fill=black,label=90:$x^3z$] {}
    -- ++(-30:1) node (302) [circle, fill=white,label=30:$x^3z^2$] {}
    -- ++(-90:1) node (202) [circle, fill=black,label=30:$x^2z^2$] {}
    -- ++(-30:1) node (203) [circle, fill=white,label=right:$x^2z^3$] {}
    -- ++(-90:1) node (103) [circle, fill=black,label=right:$xz^3$] {}
    -- ++(210:1) node (113) [circle, fill=white,label=-30:$xyz^3$] {}
    -- ++(-90:1) node (013) [circle, fill=black,label=-30:$yz^3$] {}
    -- ++(210:1) node (023) [circle, fill=white,label=-90:$y^2z^3$] {}
    -- ++(150:1) node (022) [circle, fill=black,label=-90:$y^2z^2$] {}
    -- ++(210:1) node (032) [circle, fill=white,label=-90:$y^3z^2$] {}
    -- ++(150:1) node (031) [circle, fill=black,label=210:$y^3z$] {}
    -- ++(90:1) node (131) [circle, fill=white,label=210:$xy^3z$] {}
    -- ++(150:1) node (130) [circle, fill=black,label=left:$xy^3$] {}
    -- ++(90:1) node (230) [circle, fill=white,label=left:$x^2y^3$] {}
    -- ++(30:1) node (220) [circle, fill=black,label=150:$x^2y^2$] {}
    -- ++(90:1) node (320) [circle, fill=white,label=150:$x^3y^2$] {}
    -- ++(30:1) node (310) [circle, fill=black,label=90:$x^3y$] {}
    -- (311);

  \draw (220) -- (221);
  \draw (121) -- (131);
  \draw (122) -- (022);
  \draw (112) -- (113);
  \draw (212) -- (202); &
  \draw[white] (0,0) -- (1,0); &
  \draw (0,0) node (211) [circle, fill=black] {}
    -- ++(210:1) node (221) [circle, fill=white,label=left:$x^2y^2z$] {}
    -- ++(270:1) node (121) [circle, fill=black,label=left:$xy^2z$] {}
    -- ++(-30:1) node (122) [circle, fill=white] {}
    -- ++(30:1) node (112) [circle, fill=black,label=right:$xyz^2$] {}
    -- ++(90:1) node (212) [circle, fill=white,label=right:$x^2yz^2$] {}
    -- (211);

  \draw (211)
    -- ++(90:1) node (311) [circle, fill=white,label=90:$x^3yz$] {}
    -- ++(30:1) node (301) [circle, fill=black,label=90:$x^3z$] {}
    -- ++(-30:1) node (302) [circle, fill=white,label=30:$x^3z^2$] {}
    -- ++(-90:1) node (202) [circle, fill=black,label=30:$x^2z^2$] {}
    -- ++(-30:1) node (203) [circle, fill=white,label=right:$x^2z^3$] {}
    -- ++(-90:1) node (103) [circle, fill=black,label=right:$xz^3$] {}
    -- ++(210:1) node (113) [circle, fill=white,label=-30:$xyz^3$] {}
    -- ++(-90:1) node (013) [circle, fill=black,label=-30:$yz^3$] {}
    -- ++(210:1) node (023) [circle, fill=white,label=-90:$y^2z^3$] {}
    -- ++(150:1) node (022) [circle, fill=black,label=-90:$y^2z^2$] {}
    -- ++(210:1) node (032) [circle, fill=white,label=-90:$y^3z^2$] {}
    -- ++(150:1) node (031) [circle, fill=black,label=210:$y^3z$] {}
    -- ++(90:1) node (131) [circle, fill=white,label=210:$xy^3z$] {}
    -- ++(150:1) node (130) [circle, fill=black,label=left:$xy^3$] {}
    -- ++(90:1) node (230) [circle, fill=white,label=left:$x^2y^3$] {}
    -- ++(30:1) node (220) [circle, fill=black,label=150:$x^2y^2$] {}
    -- ++(90:1) node (320) [circle, fill=white,label=150:$x^3y^2$] {}
    -- ++(30:1) node (310) [circle, fill=black,label=90:$x^3y$] {}
    -- (311);

  \draw (220) -- (221);
  \draw (121) -- (131);
  \draw (122) -- (022);
  \draw (112) -- (113);
  \draw (212) -- (202);

  \draw [thick, white] (310) -- (311);
  \draw [thick, white] (221) -- (211);
  \draw [thick, white] (121) -- (122);
  \draw [thick, white] (032) -- (022);
  \draw [white, fill=white] (-0.5,-4.5) rectangle (4,2);\\
};
\end{tikzpicture}
\caption{The graph $G$, with axis of symmetry shown, and the quotient $G'$ for $a=b=c=2$}
\label{fig:TSorbit}
\end{figure}

  Moreover, every vertex $[x^iy^jz^k]$ of $G'$ corresponds to two
  elements $x^iy^jz^k$ and $x^iz^jy^k$.  These vertices are in
  bijection with $x^iy^jz^k - x^iz^jy^k$ ($j > k$), the basis elements
  of $R^{\tau\kappa}$.  Again, we abuse notation and let $[i,j,k]$
  represent either a vertex of $G'$ or a basis element depending on
  context.  In~$G'$, every vertex orbit $[i, j, k]$ is connected to
  $[i+1, j, k]$, $[i, j +1, k]$ or $[i, j, k +1]$ (if those orbits
  exist), and similarly, $U_r$ sends any basis element $[i, j, k]$ ($j
  > k$) to a linear combination of basis elements $[i + 1, j, k] + [i,
  j + 1, k] + [i, j, k + 1]$ (if those elements exist.)  Notice if
  $k+1 = j$, then $[i,j,k+1] = 0$.

  Hence, $U'_r$, realized as a matrix indexed by the basis elements of
  rank $r$ and $r+1$, is the adjacency matrix for $G'$, and has
  entries in $\{0,1\}$.  Therefore the permanent of $U'_r$ counts the
  number of flip symmetric perfect matchings in $G$.

  To show permanent equals determinant, we show matchings in $G'$ have
  the same sign.  To get from one transpose complementary plane
  partition to another, we move a visible block (meaning three sides
  of the block are visible) to its transpose complementary location
  (which is empty by symmetry.)  This corresponds to hitting the
  perfect matching on $G'$ with a three cycle, so this preserves sign.
  All the transpose complementary plane partitions are connected in
  this way because we can transform them into the basic transpose
  complementary plane partition where the top layer of blocks is level
  (see the right plane partition in Figure~\ref{fig:blocks}.)  Hence,
  every matching on $G'$ has the same sign, making the permanent equal
  the determinant.

\begin{figure}
\begin{tikzpicture}[>= stealth]
\matrix{
\outline{1}{1}{1}{1};
\pp{{2,1}, {1}}{1};&
\draw (0,0) node {};
\draw [thick, ->] (0.25,0) node{} -- (1.25,0) node{};
\draw (1.5,0) node {};  &
\outline{1}{1}{1}{1};
\pp{{1,1},{1,1}}{1};\\
};
\end{tikzpicture}
\caption{Moving visible block to its transpose complementary location}
\label{fig:blocks}
\end{figure}
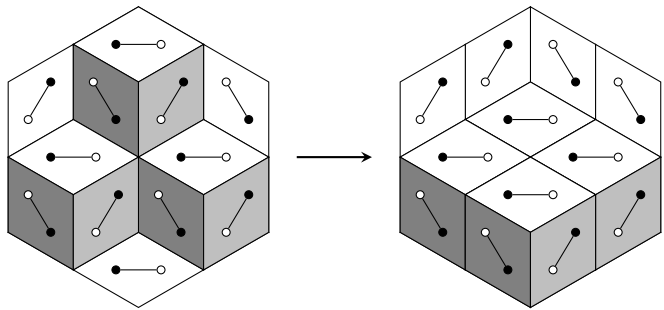

\end{proof}

\subsection*{Proof of Theorem~\ref{t:cstcpp}}
Recall the statement of Theorem~\ref{t:cstcpp}:

\begin{fthm}
  Assuming $a=b=c$ are all even, let $C_2,C_3$ act on $R = \ZZ[x,y,z]/
  (x^{2a},y^{2a},z^{2a})$ as before. Then the map $U_m|_{R^{C_3} \cap
    R^{C_2,-}}$ restricted to the $m$-th homogeneous component of the
  intersection $R^{C_3} \cap R^{C_2,-}$ has $\det(U_m|_{R^{C_3} \cap
    R^{C_2,-}})$ equal, up to sign, to the number of cyclically
  symmetric transpose complementary plane partitions in an $a \times a
  \times a$ box.
\end{fthm}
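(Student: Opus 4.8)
The plan is to fuse the proofs of Theorems~\ref{t:cspp} and~\ref{t:tcpp}. On the combinatorial side, under the rhombus-tiling bijection of Theorem~\ref{t:pp} the cyclically symmetric transpose complementary plane partitions in the $a\times a\times a$ box are exactly the perfect matchings of the hexagonal bipartite graph $G$ that are invariant under the order-$6$ subgroup $\langle\rho,\tau\kappa\rangle\le D_{12}$. On the algebra side, since $C_3$ cycles $x\to y\to z\to x$ and $C_2$ swaps $y\leftrightarrow z$, these two actions generate a copy of $S_3$ permuting the variables, and $R^{C_3}\cap R^{C_2,-}$ is the sign-isotypic component $R^{\mathrm{sgn}}$ of $R$ for this $S_3$-action; as $x+y+z$ is $S_3$-invariant, $U_m$ restricts to a map $(R^{\mathrm{sgn}})_m\to(R^{\mathrm{sgn}})_{m+1}$, and a count of $3$-element subsets of $\{0,\ldots,2a-1\}$ (via $i\mapsto 2a-1-i$) shows this restriction is square. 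A $\ZZ$-basis of $(R^{\mathrm{sgn}})_r$, for $r=m$ and $r=m+1$, is given by the alternants $\sum_{\sigma\in S_3}\operatorname{sgn}(\sigma)\,\sigma(x^iy^jz^k)$ over strict triples $i>j>k\ge 0$ with $i\le 2a-1$ and $i+j+k=r$; equivalently, by the $\langle\rho\rangle$-orbits $[i,j,k]$ of such monomials, i.e.\ by the vertex-orbits of $G/\langle\rho\rangle$ that do not meet the $\tau$-axis.

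First I would form the doubly-quotiented graph $\bar G$. Since $r=3a-2\equiv 1\pmod 3$ and $r+1\equiv 2\pmod 3$, no monomial of rank $r$ or $r+1$ is $\rho$-fixed, so $G/\langle\rho\rangle$ is well defined exactly as in the proof of Theorem~\ref{t:cspp}; then I perform the $G'$-construction of the proof of Theorem~\ref{t:tcpp} on $G/\langle\rho\rangle$, quotienting by the residual order-$2$ symmetry and deleting the vertex-orbits meeting the $\tau$-axis together with their incident edges, to obtain $\bar G$. By construction the vertices of $\bar G$ biject with the alternant basis above, and hence $\langle\rho,\tau\kappa\rangle$-invariant matchings of $G$ biject with perfect matchings of $\bar G$, i.e.\ with the CSTCPP of the $a\times a\times a$ box. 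Next I would check that, in the alternant basis, $U_m|_{R^{\mathrm{sgn}}}$ is literally the $0/1$ adjacency matrix of $\bar G$: it sends $[i,j,k]$ to $[i+1,j,k]+[i,j+1,k]+[i,j,k+1]$, with every summand that is $0$ in $R$ or meets the $\tau$-axis (in particular every $[i,j,j]=0$) dropped. The only possible departure from a $0/1$ matrix is the doubled entry at the central hexagon from the proof of Theorem~\ref{t:cspp}, between the orbit pair $[a,a,a-1]$ and $[a,a-1,a-1]$; but both of these orbits contain a monomial with two equal exponents, hence both meet the $\tau$-axis and are deleted in passing to $\bar G$, so the double edge disappears. (This is also where the parity hypothesis on $a$ enters, in the same way the condition ``$abc$ even'' enters Theorem~\ref{t:tcpp}.) Consequently $\perm(U_m|_{R^{\mathrm{sgn}}})$ counts the perfect matchings of $\bar G$, i.e.\ the number of CSTCPP.

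Finally I would show the permanent equals the determinant up to sign, by showing every perfect matching of $\bar G$ has the same sign as a bijection from the rank-$m$ basis to the rank-$(m+1)$ basis. As in the earlier proofs, any two CSTCPP are joined by a sequence of elementary moves: adding or removing a full $\langle\rho,\tau\kappa\rangle$-orbit of blocks, or toggling the central configuration; tracked on $\bar G$, each such move multiplies the associated matching-permutation by an even permutation (a $3$-cycle for a block-orbit; the central toggle not affecting the sign), and every CSTCPP reduces to the canonical one with level top layer. Hence all perfect matchings of $\bar G$ share a common sign, so $\det(U_m|_{R^{C_3}\cap R^{C_2,-}})=\pm\perm(U_m|_{R^{C_3}\cap R^{C_2,-}})$ equals, up to sign, the number of CSTCPP in the $a\times a\times a$ box. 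The step I expect to be the main obstacle is the verification in the second paragraph: making sure that after the two successive quotient/deletion operations interact there is no residual multiplicity at the centre, no accidental coincidence of orbits that would spoil either the dimension count or the adjacency description, and ---relatedly--- that the block-move permutations keep even parity after being pushed through all the identifications.
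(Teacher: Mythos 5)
Your proposal is correct and takes essentially the same route as the paper, whose entire proof of this statement is the one-line ``hybrid of the two previous proofs'' using exactly your alternant basis for $R^{C_3}\cap R^{C_2,-}$ and block moves on $\rho$-orbits; your expanded version (the double quotient graph, the disappearance of the doubled central entry once repeated-exponent orbits are deleted, the squareness count) just makes explicit what the paper leaves implicit. The only imprecision is the phrase ``adding or removing a full $\langle\rho,\tau\kappa\rangle$-orbit of blocks'': transpose complementary partitions all contain the same number of boxes, so the elementary move is the paper's --- moving three visible blocks in a $\rho$-orbit to their transpose complementary locations --- which is what your three-cycle/sign analysis in fact describes, so the argument is unaffected.
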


\begin{proof}
This is a hybrid of the two previous proofs: the basis elements are
now $x^iy^jz^k - x^iz^ky^j + y^iz^jx^k - z^iy^jx^k + z^ix^jy^k -
y^ix^jz^k$, and now we move three visible blocks in the same $\rho$
orbit to their transpose complementary locations to get from one
CSTCPP to another.
\end{proof}

\section{Questions}
\begin{question}
  Is there some way to determine the Smith entries for the middle map
  $U_{\lfloor \frac{e-1}{2} \rfloor}$ for $R$, and is there a
  combinatorial explanation for the Smith entries in the context of
  plane partitions when $A = a + b, B = a + c$, and $C = b + c$?
\end{question}

\begin{question}
  Why does the extension of Theorem~\ref{t:snf} seem to work for four
  variables but not for five variables?
\end{question}

\begin{question}
  Is there some way to achieve the counts for the other symmetry
  classes of plane partitions as determinants of maps like $U_r$
  restricted to natural submodules of $R$?
\end{question}


\section*{Acknowledgements} 
This research was performed in a summer REU at the University of
Minnesota, mentored by Vic Reiner and Dennis Stanton, supported by NSF
grant DMS-1001933.  The authors thank Vic Reiner and Dennis Stanton
for their immense guidance, without which none of this research would
have been possible.


\end{document}